\documentclass[10pt,xcolor=pdftex,a4paper,oneside,centertags,reqno,endnotes]{amsart}

\usepackage{apptools}
\usepackage{chngcntr}

\usepackage{esint}
\usepackage{endnotes}
\usepackage{latexsym}
\usepackage{amsmath}
\usepackage{amsfonts}
\usepackage{amssymb}   
\usepackage{epsfig}
\usepackage{amscd}
\usepackage{amsthm}
\usepackage{enumerate}
\usepackage{eucal}
\usepackage[utf8x]{inputenc}
\usepackage{graphics}
\usepackage[colorlinks = true]{hyperref}
\usepackage{amsrefs}

\usepackage{pdfsync}
\usepackage{color}
\usepackage{mathrsfs}

 \setlength{\textwidth}{128mm}
 \setlength{\pagedepth}{185mm}

\usepackage{stackengine}

\hyphenation{apply}

\newcommand{\dashint}{\,\ThisStyle{\ensurestackMath{%
\stackinset{c}{.2\LMpt}{c}{.5\LMpt}{\SavedStyle-}{\SavedStyle\phantom{\int}}}%
\setbox0=\hbox{$\SavedStyle\int\,$}\kern-\wd0}\int}

\usepackage[T1]{fontenc}
\usepackage{mathrsfs}
\usepackage{epsfig}
\usepackage{xypic}
\usepackage{verbatim}
\usepackage{multicol}
\usepackage{wrapfig}
\usepackage{pgf,tikz}
\usepackage{mathrsfs}
\usetikzlibrary{arrows}

\numberwithin{equation}{section}

\newtheorem{theorem}{Theorem}[section]
\newtheorem{proposition}[theorem]{Proposition}
\newtheorem{lemma}[theorem]{Lemma}
\newtheorem{corollary}[theorem]{Corollary}
\theoremstyle{definition}
\newtheorem{definition}[theorem]{Definition}
\newtheorem{remark}[theorem]{Remark}

\renewcommand{\Re}{\mathrm{Re}}

\newcommand{\Dom}{\mathrm{Dom}}

\newcommand{\N}{\mathbb{N}}
\newcommand{\Z}{\mathbb{Z}}
\newcommand{\R}{\mathbb{R}}
\newcommand{\C}{\mathbb{C}}

\usepackage{textcomp}
\usepackage{dsfont}
\usepackage{latexsym}
\usepackage{amssymb}
\usepackage{amsthm}
\usepackage{amsmath}
\DeclareMathAlphabet{\mathpzc}{OT1}{pzc}{m}{en}
\usepackage{yfonts}
\usepackage{xfrac}
\usepackage{fge}

\usepackage{fancyhdr}

\usepackage{newlfont}
\usepackage{graphicx}

\usepackage{mathtools}
\usepackage{comment}
\usepackage{indentfirst}
\usepackage{braket}
\usepackage{scalerel}

\DeclareMathOperator{\supp}{supp}

\DeclarePairedDelimiter{\abs}{\lvert}{\rvert}

\DeclarePairedDelimiter{\norm}{\lVert}{\rVert}

{\left\lbrace\begin{array}{@{}l@{}}}%
{\end{array}\right.}

\newcommand{\mi}{\mu}

\newcommand{\loc}{\mathrm{loc}}

\newcommand{\dd}{\mathrm{d}}

\newcommand{\T}{\mathbb{T}}

\newcommand{\gf}{\mathfrak{g}}
\newcommand{\hf}{\mathfrak{h}}
\newcommand{\Ls}{\mathcal{L}}
\newcommand{\Hs}{\mathcal{H}}

\usepackage{accents}

\newcommand{\Bc}{\mathcal{B}}

\newcommand{\Hc}{\mathcal{H}}

\newcommand{\Pc}{\mathcal{P}}

\newcommand{\eps}{\varepsilon}

\renewcommand{\epsilon}{\varepsilon}

\title[Schr\"odinger operators on Lie groups]{Schr\"odinger operators on Lie groups \\ with purely discrete spectrum}

\author[Tommaso Bruno]{Tommaso Bruno}
\address{Department of Mathematics: Analysis, Logic and Discrete Mathematics, Ghent University,
Krijgslaan 281, 9000 Ghent, Belgium}
\email{tommaso.bruno@ugent.be}
\author[Mattia Calzi]{Mattia Calzi}
\address{Dipartimento di Matematica, Universit\`a degli Studi di Milano, Via C.\ Saldini 50, 20133 Milano, Italy}
\email{mattia.calzi@unimi.it}

\keywords{Lie groups, Schr\"odinger operators, discrete spectrum, Muckenhoupt weights.}

\thanks{{\em Math Subject Classification.} {Primary:} 22E30, 35J10, 58J50 {Secondary:} 47A10, 35R03}

\thanks{T.\ Bruno gratefully acknowledges support by the Research Foundation -- Flanders (FWO) through the postdoctoral grant 12ZW120N}

\DeclareMathAlphabet{\mathcal}{OMS}{cmsy}{m}{n}

\begin{document}

\begin{abstract}
On a Lie group $G$, we investigate the discreteness of the spectrum of Schr\"odinger operators of the form $\Ls +V$, where $\Ls$ is a subelliptic sub-Laplacian on $G$ and the potential $V$ is a locally integrable function which is bounded from below. We prove general necessary and sufficient conditions for arbitrary potentials, and we obtain explicit characterizations when $V$ is a polynomial on $G$ or belongs to a local Muckenhoupt class. We finally discuss how to transfer our results to weighted sub-Laplacians on $G$.
\end{abstract}

\maketitle

\section{Introduction}
The aim of this paper is to study the discreteness of the spectrum of Schr\"odinger operators on Lie groups, that is, operators of the form
\[
\mathcal{H}_V = \Ls + V,
\]
where $\Ls$ is the subelliptic sub-Laplacian on the group associated with a left-invariant sub-Riemannian structure, and $V$ is a locally integrable function which is essentially bounded from below.  When the group is a Euclidean space $\R^d$, these operators  reduce to the classical Schr\"odinger operators $\Delta + V$, where $\Delta$ is the non-negative  Laplacian. The study of their spectrum, of its discreteness in particular, has a long-standing tradition and history; see, e.g.,~\cite{Molcanov, MS, KMS, KS, MetafunePallara, Simon}, and, more recently,~\cite{Dallara}. Besides their independent interest, Schr\"odinger operators arise from the study of ``weighted'' Laplacians, namely the natural substitutes of the Laplacian when $\R^d$ is endowed with an absolutely continuous measure. The prototypical example is that of the Gaussian measure and of the Ornstein--Uhlenbeck operator.

Outside the Euclidean setting,  not much is known. Kondratiev and Shubin's influential paper~\cite{KS}, following Mol\v{c}anov~\cite{Molcanov}, considers the case of Riemannian manifolds with bounded geometry, whose nature allows however to reduce several problems at a Euclidean level.  On certain subfamilies of stratified Lie groups, some results were obtained  by Inglis~\cite{Inglis} and the authors~\cite{ BC1, BC2}. Such results, however, were mainly motivated by the study of weighted sub-Laplacians, and the potentials considered therein were related to those arising from these sub-Laplacians, for which weak results are often enough. General, yet powerful, results for Schr\"odinger operators with arbitrary potentials on Lie groups are, to the best of our knowledge, still missing. The goal of this paper is to make a first step to fill this gap. 

We consider a noncompact connected Lie group $G$, whose Lie algebra we denote with $\mathfrak{g}$ and we identify with the algebra of left-invariant vector fields on $G$.  We endow $\mathfrak{g}$ with an inner product, we fix a subspace $\Hc$ of $\mathfrak{g}$ which generates $\gf$ as a Lie algebra, and choose an orthonormal basis $(X_1,\dots, X_\nu)$ of $\Hc$. 
 We choose a left Haar measure $\mu$ on $G$ and denote with $d$ the Carnot--Carathéodory distance induced by $\Hc$; the metric measure space $(G,d, \mu)$ is then locally -- but in general not globally -- doubling. The second order differential operator
\[
\mathcal{L} = -\sum_{j=1}^\nu (X_j^2 +c_j X_j),
\]
where $ c_j =(X_j \delta_G)(e)$, $\delta_G$ is the modular function of $G$ and $e$ is the identity, is a subelliptic operator which depends only on $G$, $\mathcal{H}$ and its scalar product, and is symmetric on $L^2(\mu)$. In other words, once the left-invariant sub-Riemannian structure on $G$ has been chosen, $\Ls$ is an intrinsic operator; cf.~\cite{ABGR}. For a potential $V\in L^1_\loc(\mu)$ and bounded from below, the Hermitian form
\[
Q\colon (f,g)\mapsto \int_{G} \bigg( \sum_{j=1}^\nu (X_jf)(X_j\overline{g}) +  V f \overline{g} \bigg)\,\dd \mu
\]
induces a self-adjoint operator $\Hs_V$ on $L^2(\mu)$ which, on its domain, acts as $\Ls + V$ meant in the distributional sense. Our aim is to determine under what conditions on $V$ the spectrum of  $\Hs_V$ is purely discrete, i.e., it is a discrete set and consists of eigenvalues of finite multiplicity. 

We are interested in two types of results. On the one hand, we look for ``universal'' results, namely necessary or sufficient conditions for the discreteness of the spectrum of $\Hs_V$ with no other assumptions on $V$ than local integrability and boundedness from below. These results are usually expressed in terms of growth conditions, or rather decay, of $V$ and the associated form $Q$. Due to the high generality, however, such conditions are often hard to verify. On the other hand, then, we also look for more explicit conditions which are easier to test, for specific choices of potentials. Following~\cite{Auscher-BenAli, MetafunePallara, Dallara}, we consider the cases when $V$ is a polynomial and when $V$ is a local Muckenhoupt weight.

Inspired by results of Kondratiev and Shubin~\cite{KS} and Metafune and Pallara~\cite{MetafunePallara}, more specifically, our main ``universal'' results for the discreteness of the spectrum of $\Hs_V$ are:
\begin{itemize}
\item a characterization expressed in terms of the decay, outside compact sets, of the $L^2$-norm of functions in the ``unit ball'' of $Q$ (i.e.\ such that $(\min V+1)\norm{f}_2^2+Q(f)\leq 1$; Proposition~\ref{prop1}); 
\item a characterization in terms of the growth of the Dirichlet and Neumann eigenvalues of $\Ls+V$ on balls whose centres go to infinity (Theorem~\ref{teoKS}); 
\item a sufficient condition in terms of the thinness at infinity, in a measure-theoretic sense, of the sublevel sets of $V$ (Proposition~\ref{prop:2}).
\end{itemize}

Besides their intrinsic interest, the preceding results may be applied to the study of two specific families of potentials, namely polynomials and Muckenhoupt weights. In this case, we obtain more descriptive characterizations.
\begin{itemize}
\item If $V$ is a polynomial, we characterize the discreteness of the spectrum of $\Hs_V$ in terms of the vanishing properties of the right-invariant derivatives of $V$ (Theorem~\ref{Thm:carpolinomio} and Proposition~\ref{prop:carpolinomio});
\item if $V$ is in a local Muckenhoupt $A_p$ class, $p\in [1,\infty)$, or if $V$ is in a local $A_\infty$ Muckenhoupt class and the measure with density $V$ with respect to $\mu$ is locally doubling, we characterize the discreteness of the spectrum of $\Hs_V$  in terms of the growth of the $L^1$ norm of $V$ on balls whose centres go to infinity (Theorem~\ref{teo:Muck}).
\end{itemize}
The above results are inspired respectively by works of Metafune and Pallara~\cite{MetafunePallara} and by Auscher and Ben Ali~\cite{Auscher-BenAli} and Dall'Ara~\cite{Dallara} on $\R^d$. We emphasize, however, that the generality of our setting  requires substantially new ideas and techniques. To the best of our knowledge, they are the first results of their kind outside the Euclidean context. We also point out that the case of polynomial potentials allows us to shed some light on certain harmonic oscillators on Heisenberg groups of recent introduction, cf.~\cite{Fischer, RR1, RR2}.

Finally, we discuss and describe how Schr\"odinger operators are related to weighted sub-Laplacians  with possibly non-smooth weights, by means of a unitary equivalence at the $L^2$ level.

\smallskip

The structure of the paper is as follows. In the following Section~\ref{Sec:preliminaries}, we describe the setting in detail, we introduce all the relevant objects of the paper and fix the notation. Section~\ref{Sec:Firstcar} contains a first, fundamental characterization of the discreteness of the spectrum of $\Hs_V$, which is then applied in Section~\ref{Sec:necandsuff} to obtain universal results. Section~\ref{Sec:pol} is devoted to the study of polynomial potentials, while Section~\ref{Sec:Muck} to potentials belonging to local Muckenhoupt classes. The final Section~\ref{Sec:WSL} discusses weighted sub-Laplacians and their unitary equivalence to Schr\"odinger operators.

\subsection*{Acknowledgements}
We thank Fulvio Ricci for drawing our attention to the study of Schr\"odinger operators on Lie groups, and Gioacchino Antonelli for several discussions about polynomials and sub-Riemannian geometry. The two authors were partially supported by the GNAMPA 2020 project ``Fractional Laplacians and subLaplacians on Lie groups and trees''. Part of this research was  carried out during a visit of the two authors to the Mathematisches Forschungsinstitut Oberwolfach in June 2021, through the ``Research in Pairs'' program. We sincerely thank the Institute for the opportunity and all its staff for their kind hospitality.

\section{Lie groups and Schr\"odinger operators}\label{Sec:preliminaries}

\subsection{Preliminaries on Lie groups}

Let $G$ be a noncompact connected Lie group with identity $e$. We fix a left Haar measure $\mu$ on $G$, and denote by $\delta_G$ the modular function on $G$, defined so that $\mu(A x)=\delta_G(x) \mu(A)$ for every measurable subset $A$ of $G$ and $x\in G$. Then, $\delta_G$ is an analytic character of $G$ and the measure whose density is $\delta_G^{-1}$ with respect to $\mu$ is a right Haar measure on $G$.

We identify the Lie algebra $\mathfrak{g}$ of $G$ with the algebra of left-invariant vector fields on $G$. We denote by $\exp_G$, or simply $\exp$ if there is no risk of confusion, the exponential map from $\mathfrak{g}$ to $G$; we recall that $\exp_G$ is a local diffeomorphism at $0$. We endow $\mathfrak{g}$ with a scalar product whose induced norm we denote by $| \cdot|$. 

We fix a subspace $\Hc$ of $\mathfrak{g}$ which generates $\gf$ as a Lie algebra, and we define the associated left-invariant ``horizontal'' gradient $\nabla_\Hc$ so that $\langle \nabla_\Hc f(e), X\rangle=  (X f)(e)$ for every $f\in C^1(G)$ and $X\in \Hc$. We  identify $\Hc$ with the corresponding  left-invariant distribution on $G$ given by $ \Hc_x = \{X_x\colon X\in \Hc\}$ for $x\in G$. We also endow such distribution with the left-invariant sub-Riemannian structure induced by the scalar product on $\gf$, namely $\langle X_x, Y_x\rangle_{ \Hc_x}= \langle X,Y\rangle_\gf$ for every $X,Y\in \Hc$ and $x\in G$.
 
The distribution $\Hc$ induces a distance on $G$, called Carnot--Carathéodory distance, which can be defined as follows. For $x,y\in G$, define $\Gamma(x,y)$ as the set of all absolutely continuous curves $\gamma\colon [0,1]\to G$ such that $\gamma(0)=x$, $\gamma(1)=y$, and $\gamma'(t)\in \Hc_{\gamma(t)} $ for almost every $t\in [0,1]$. Then, define 
\[
d(x,y) = \inf \bigg\{ \int_0^1 \abs{\gamma'(t)}\,\dd t \colon \gamma \in \Gamma(x,y)\bigg\}.
\]
By the left invariance of $\Hc$, the distance $d$ is left invariant:
\[
d(xy, z) = d(y,x^{-1}z), \qquad x,y,z\in G.
\]
Equivalently, if for $x\in G$ we denote by $L_x$ the left translation operator by $x$, then $d(L_x y, L_x z)= d(y,z)$.

Note that the topology induced by $d$ is the same as the original topology of $G$ by Chow's theorem (cf.~\cite[Theorem 3.31]{AgrachevBoscainBarilari}), and that every bounded subset of $G$ is relatively compact by~\cite[Proposition 3.47 and Corollary 7.51]{AgrachevBoscainBarilari}. 

For $x\in G$ and $r>0$, we shall denote by $B(x,r)$ the open ball centred at $x$ with radius $r$ with respect to $d$. We recall, cf.~\cite{NSW}, that there is $d_0>0$ such that for all $R>0$ there is $C>0$ such that
\begin{equation}\label{pallepiccole}
C^{-1} r^{d_0}\leq \mu(B(e,r)) \leq C r^{d_0}, \qquad r\in (0,R].
\end{equation}
Observe that by the left invariance of $d$ and $\mu$, one has 
\[
\mu(B(x,r)) = \mu(B(e,r))
\]
for all $x\in G$ and $r>0$. By~\eqref{pallepiccole}, then, the metric measure space $(G,d,\mu)$ is locally doubling; that is,  for every $R>0$ there is a constant $C>0$ such that 
\begin{equation}\label{localdoubling}
\mu(B(x,2r))\leq C \mu(B(x,r)), \qquad \forall \, x\in G, \; r\in (0,R].
\end{equation}
The global doubling condition holds if and only if $G$ is of polynomial growth, see, e.g.,~\cite[Theorem II.3]{Guivarch}. 

\subsection{$L^p$ spaces and approximation procedures} We denote by $L^p(G)$ and $L^p_\loc(G)$, or simply $L^p$ and $L^p_\loc$ if there is no risk of confusion, the usual Lebesgue and local Lebesgue spaces with respect to $\mu$, respectively. We  denote by $L^2_\Hc$ the space of square integrable sections of $\Hc$;  $L^p_{\Hc}$ and $L^p_{\Hc,\loc}$ are defined similarly.  The space of smooth and compactly supported functions on $G$ will be denoted by $C_c^\infty(G)$, or simply $C_c^\infty$, and the spaces $C^\infty(G)$ and $C_c(G)$ are defined accordingly.

We now introduce some families of functions which are particularly useful in approximation procedures. We first recall that if $p,q,r\in [1,\infty]$ are given so that $\frac{1}{p}+\frac{1}{q}=1+\frac{1}{r}$, then the convolution of $f\in L^p$ and $g\in L^q$ is given by
\[
(f*g)(x) =  \int_G f(y) g(y^{-1}x)\, \dd \mu(y) = \int_G f(xy^{-1}) g(y) \delta_G^{-1}(y) \, \dd \mu(y),
\]
and it satisfies Young's inequality (see, e.g.,~\cite[Remark 2.2]{KleinRusso})
\begin{equation}\label{Young}
	\norm{f* (\delta_G^{1/p'} g)}_{r}\leq \norm{f}_{p} \norm{g}_{q},
\end{equation}	
where $p'$ denotes the conjugate index to $p$.

By an \emph{approximate identity} on $G$ we shall mean a sequence $(\eta_j)$, $j\in \N$, of positive $C^\infty_c$  functions such that
\[
\int_G \eta_j \delta_G^{-1}\,\dd \mu=1 \quad \text{and} \quad \supp{\eta_j}\subseteq B(e,1/(j+1)), \quad \text{for every } j\in \N.
\]
With this choice, $1*\eta_j=1$ and $f*\eta_j\geq0$ if $f\geq0$.  In addition, $f*\eta_j \to f$ and $\eta_j*f\to f$ in $L^p$ (resp.\ $L^p_\loc$) whenever $f\in L^p$ (resp.\ $L^p_\loc$) and $p\in [1,\infty)$. If $f\in L^p$, this follows from~\eqref{Young} and the density of $C_c$ in $L^p$. If $f\in L^p_\loc$, this follows from the convergence in $L^p$, since
	\[
	(f*\eta_j) (x)=[(f\mathbf{1}_{B(e,r+1)})*\eta_j](x), \qquad j\in \N, \: r>0,
	\]	
for every $x\in B(e,r)$, and similarly for $\eta_j*f$. Here and all throughout the paper, we denote by $\mathbf{1}_\Omega$ the characteristic function of a given measurable subset $\Omega$ of $G$.

By an \emph{approximate unity} on $G$ we shall mean a family of functions of the form $(\mathbf{1}_{B(e,{r+1})}*\psi)_{r>0}$, where $\psi \in C_c^\infty$ is positive and such that 
\[
\int_G \psi\delta_G^{-1}\,\dd \mu=1, \qquad \supp \psi \subseteq B(e,1).
\]
Then, 
\[
\mathbf{1}_{B(e,r)}\leq \mathbf{1}_{B(e,r+1)}*\psi\leq \mathbf{1}_{B(e,r+2)}, \qquad r>0,
\]
so that $(\mathbf{1}_{B(e,{r+1})}*\psi)$ is uniformly bounded and converges locally uniformly to $1$ when $r\to\infty$. Furthermore, if $X$ is a left-invariant differential operator without constant term on $G$, then the functions $X(\mathbf{1}_{B(e,r)}*\psi)=\mathbf{1}_{B(e,r)}*X\psi$ are uniformly bounded on $G$ and converge locally uniformly to $1*X\psi=0$ when $r\to +\infty$.

\subsection{Sub-Laplacians and Schr\"odinger operators}
We fix once and for all an orthonormal basis $\mathbf{X}=( X_1,\dots, X_\nu)$ of $\Hc$, so that the horizontal gradient can be written as
 \[
 \nabla_\Hc f= \sum_{j=1}^\nu (X_j f)X_j.
 \]
We denote by $\nabla_\Hc^*$ the formal adjoint of $\nabla_\Hc$ on $L^2$, and define the symmetric sub-Laplacian
\[
\mathcal{L} = \nabla_\Hc^* \nabla_\Hc=-\sum_{j=1}^\nu (X_j^2 +c_j X_j), \qquad c_j =(X_j \delta_G)(e),
\]
so that
\[
\int_ G (\mathcal{L} \varphi) \, \overline{\psi} \, \dd \mu=\int_ G \varphi\,  (\mathcal{L} \overline{\psi}) \, \dd \mu = \int_G \nabla_\Hc \varphi \cdot \nabla_\Hc \overline{\psi} \, \dd \mu
\]
for every $\varphi,\psi\in C^\infty_c$. We recall that $\Ls$ is an intrinsic operator, namely it depends only on the choice of $\Hs$ and the scalar product thereon, and that it is a sum of squares (i.e.\ has no drift) if and only if the group $G$ is unimodular; cf.~\cite{ABGR, HMM}.

Let $V$ be a function in $ L^1_\loc$ which is essentially bounded from below. Up to shifting $V$, we may assume that $V\geq 1$. Given
\[
\Dom(Q)=  \Big\{ f\in L^2\colon \nabla_\Hc f\in L^2_\Hc, \; \sqrt V f\in L^2 \Big\},
\]
we define the Hermitian form
\[
Q\colon (f,g)\mapsto \int_{G} (\nabla_\Hc f \cdot \nabla_\Hc \overline{g} +  V f \overline{g} )\,\dd \mu, \qquad  f,g\in \Dom(Q).
\]
We adopt the customary notation $Q(f) = Q(f,f)$. Since $V\geq 1$, the form $Q$ induces a norm on $\Dom(Q)$ given by $f\mapsto Q(f)^{1/2}$.

Observe that $Q$ is a positive, continuous and closed form on $L^2$; it is also densely defined, since $C^\infty_c\subseteq \Dom(Q)$. Thus, it induces a self-adjoint operator $\Hs_V$ on $L^2$ (cf., e.g.,~\cite[Definition 1.21]{Ouhabaz}) whose domain we denote by $\Dom(\Hs_V)$. Since 
\[
\langle \Hs_V f,\varphi \rangle_{L^2} = \langle \Ls f +Vf, \varphi\rangle_{L^2} 
\]
for every $f\in \Dom(\Hs_V)$ and $\varphi \in C_c^\infty$, the function $\Hs_V f$ coincides with the distribution $(\Ls+V )f$ (observe that $fV = \sqrt{V}f \sqrt{V} \in L^1_\loc$). We shall show in Lemma~\ref{lem:5} below that $C_c^\infty$ is actually dense in $\Dom(Q)$; this implies that, if $f\in \Dom(Q)$ and $(\Ls+V )f \in L^2$, then $f\in \Dom(\Hs_V)$, namely that
\[
\Dom(\Hs_V) = \{ f\in \Dom(Q)\colon (\Ls+V )f \in L^2\}.
\]

\begin{lemma}\label{lem:5}
The space $C^\infty_c$ is dense in $\Dom(Q)$.
\end{lemma}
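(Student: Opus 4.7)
The plan is a three-step approximation in the Hilbert space $(\Dom(Q),Q(\cdot)^{1/2})$: truncate the support, truncate the height, mollify.

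Given $u\in \Dom(Q)$, I would first cut off by setting $u_r:=\chi_r u$ with $\chi_r=\mathbf{1}_{B(e,r+1)}*\psi$ from the approximate unity. The distributional product rule gives $\nabla_\Hc u_r=\chi_r\nabla_\Hc u+u\,\nabla_\Hc\chi_r\in L^2_\Hc$, and $\sqrt V u_r\in L^2$ is clear, so $u_r\in\Dom(Q)$ has compact support. All three pieces of $\|u-u_r\|_Q$ will vanish as $r\to\infty$ by dominated convergence: the terms $\chi_r u\to u$ and $\sqrt V\chi_r u\to\sqrt V u$ in $L^2$ use $|\chi_r|\le 1$ and $\chi_r\to 1$ locally uniformly, the summand $\chi_r\nabla_\Hc u\to\nabla_\Hc u$ in $L^2_\Hc$ similarly, and the error $u\,\nabla_\Hc\chi_r\to 0$ in $L^2_\Hc$ uses that $\nabla_\Hc\chi_r$ is uniformly bounded and tends to $0$ locally uniformly, as recorded in Section~\ref{Sec:preliminaries}.

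Next, after splitting into real and imaginary parts, I would apply the height cutoff $T_n(t)=\max(\min(t,n),-n)$ to obtain $u_r^{(n)}:=T_n\circ u_r$. The chain rule for Lipschitz composition in horizontal Sobolev spaces yields $\nabla_\Hc u_r^{(n)}=\mathbf{1}_{\{|u_r|<n\}}\nabla_\Hc u_r$, whence $|u_r^{(n)}|\le|u_r|$ and $|\nabla_\Hc u_r^{(n)}|\le|\nabla_\Hc u_r|$ pointwise, and dominated convergence gives $u_r^{(n)}\to u_r$ in the $Q$-norm. This produces, arbitrarily close to $u$, elements of $\Dom(Q)$ that are both compactly supported and bounded.

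Finally, given such a $w\in\Dom(Q)$ compactly supported in $K$ and with $w\in L^\infty$, I would mollify by left convolution $v_j:=\eta_j*w$. Then $v_j\in C_c^\infty$, $\|v_j\|_\infty\le\|\eta_j\|_{L^1(\mu)}\|w\|_\infty$ is bounded uniformly in $j$, and all $v_j$ are supported in a fixed compact set $K_1\supseteq K$. Left convolution commutes with each left-invariant $X_i$, so $X_iv_j=\eta_j*X_iw$, and $v_j\to w$, $X_iv_j\to X_iw$ in $L^2$ by the approximate identity property. For the weighted part, $v_j-w$ tends to zero a.e.\ along a subsequence, and the uniform $L^\infty$ bound combined with $V\in L^1_\loc$ provides the dominating function $4\|w\|_\infty^2V\mathbf{1}_{K_1}\in L^1$; hence $\sqrt V v_j\to\sqrt V w$ in $L^2$ by dominated convergence along the subsequence, extended to the whole sequence by a standard subsequence argument. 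A diagonal extraction over $r,n,j$ then closes the proof. The main obstacle is this last $\sqrt V$-convergence: since $V$ is only locally integrable (not locally bounded), the dominated convergence argument needs the $L^\infty$ bound produced by the height truncation, which is therefore essential.
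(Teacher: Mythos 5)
Your three-step plan — cut off the support with an approximate unity, truncate the height, then mollify with an approximate identity — is exactly the paper's argument, and your expanded justification of the $\sqrt V$-convergence in the mollification step (where the $L^\infty$ bound from the height truncation dominates $V\mathbf{1}_{K_1}\in L^1$) correctly fills in what the paper leaves implicit. The only technical difference is that you truncate with the piecewise-linear $T_n$ on real and imaginary parts, which requires invoking the chain rule for Lipschitz compositions in the horizontal Sobolev space, whereas the paper chooses a smooth, bounded, Lipschitz $\varphi\colon\C\to\C$ equal to the identity on $B(0,1)$ and sets $\varphi_k=k\varphi(\,\cdot\,/k)$, thereby avoiding that subtlety; both are valid.
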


\begin{proof}
If $f\in \Dom(Q)$ and $(\psi_r)_{r>0}$ is an approximate unity, then $f \psi_r \to f$ in $\Dom(Q)$ when $r\to \infty$. Hence, we may reduce to showing the density of $C_c^\infty$ in the space of functions in $\Dom(Q)$ with compact support.

Let now $\varphi\colon \C\to \C$ be a bounded, Lipschitz, and smooth function  which is the identity on $B(0,1)$, and define $\varphi_k= k\varphi(\,\cdot\,/k  )$ for every $k\in \N^*$. If $f \in \Dom(Q)$ is compactly supported, then $\varphi_k\circ f$ is bounded and compactly supported, belongs to $\Dom(Q)$, and converges to $f$ in $\Dom(Q)$ by dominated convergence, since  
\[
\nabla_\Hc(\varphi_k\circ f)= (\varphi'_k\circ f) \nabla_\Hc f,
\]
$\abs{\varphi_k\circ f}\leq \|\varphi\|_\infty \abs{f}$, and $\abs{\varphi'_k\circ f}\leq \|\varphi'\|_\infty $.

Therefore, one may in turn reduce to the problem of approximating an $f \in \Dom(Q)$ which is bounded and compactly supported. To do this, it is enough to consider $\eta_j*f$ where $(\eta_j)$ is an approximate identity.
\end{proof}

As mentioned above, Lemma~\ref{lem:5} implies that $\Dom(\Hs_V)$ is the set of the functions $f\in \Dom(Q)$ such that $(\mathcal{L} +V) f$, defined distributionally, belongs to $L^2$, and $\Hs_V f=\mathcal{L} f+ V f$ for every such $f$. In addition, $C^\infty_c$ is a core for $\sqrt{\Hs_V}$, but in general $C^\infty_c$ is not even contained in $\Dom(\Hc_V)$, unless $V\in L^2_\loc$.  
When $V\in L^2_\loc$, one may actually show that $C^\infty_c$ is a core for $\Hc_V$, which is therefore essentially self-adjoint on the domain $C^\infty_c$.
The proof is a verbatim adaptation of the proof of~\cite[Theorem 3.2]{BC1} (see also~\cite[Theorem X.28]{ReedSimon}), which is based on Kato's inequality for $\Ls$, see~\cite[Theorem X.27]{ReedSimon}, by using approximate unities as defined above.

\begin{proposition}
If $V\in L^2_\loc$, then $C^\infty_c$ is a core for $\Hc_V$.
\end{proposition}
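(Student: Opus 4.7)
The plan is to show that the symmetric restriction $T:=\Hc_V|_{C^\infty_c}$ is essentially self-adjoint, whence $\overline T=\Hc_V$ and the core property of $C^\infty_c$ follows. First, $T$ is well-defined on $C^\infty_c$ under $V\in L^2_\loc$: for $\varphi\in C^\infty_c$ one has $\Ls\varphi\in C^\infty_c\subset L^2$ and $V\varphi\in L^2$ (by compact support of $\varphi$), so $T\varphi=\Ls\varphi+V\varphi\in L^2$. Since $V\geq 1$ yields $T\geq 1$, by a standard semiboundedness criterion essential self-adjointness is equivalent to $\mathrm{Ker}(T^*+1)=\{0\}$: that is, any $u\in L^2$ with $(\Ls+V+1)u=0$ in $\mathcal{D}'(G)$ must vanish (note $Vu\in L^1_\loc$ by Cauchy--Schwarz, so this distributional equation makes sense).

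The crucial tool is Kato's inequality for $\Ls$: for $u\in L^1_\loc$ with $\Ls u\in L^1_\loc$ distributionally,
\[
\Ls|u|\leq\Re\bigl[(\sgn\overline u)\,\Ls u\bigr]\qquad\text{in }\mathcal{D}'(G).
\]
This is proved in the Lie-group setting in~\cite[Theorem 3.2]{BC1} via the positivity of $e^{-t\Ls}$ (compare~\cite[Theorem X.27]{ReedSimon}), and extends verbatim to the present setting. Combined with $\Ls u=-(V+1)u$ and $V\geq 1$, it yields $(\Ls+1)|u|\leq -V|u|\leq 0$ in $\mathcal{D}'(G)$.

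To deduce $|u|=0$ one exploits the positivity of the resolvent $(\Ls+1)^{-1}$, which acts as convolution with a positive kernel obtained by integrating the heat kernel of $\Ls$. Fix $f\in C^\infty_c$ with $f\geq 0$ and set $g:=(\Ls+1)^{-1}f$; it is smooth (by hypoellipticity of $\Ls$, which holds since $\Hc$ generates $\gf$), nonnegative (by positivity of the resolvent), and $g,\nabla_\Hc g\in L^2$ (as $g\in\Dom(\Ls)\subset\Dom(Q_\Ls)$). Using the approximate unities $\chi_r$ of Section~\ref{Sec:preliminaries}, set $\psi_r:=\chi_r g\in C^\infty_c$ with $\psi_r\geq 0$; a direct computation gives
\[
(\Ls+1)\psi_r=\chi_r f+g\,\Ls\chi_r-2\,\nabla_\Hc\chi_r\cdot\nabla_\Hc g,
\]
and the last two terms are supported in $B(e,r+2)\setminus B(e,r)$ and tend to $0$ in $L^2$ as $r\to\infty$, because $\Ls\chi_r$ and $\nabla_\Hc\chi_r$ are uniformly bounded and $g,\nabla_\Hc g\in L^2$. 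Testing the distributional inequality $(\Ls+1)|u|\leq 0$ against the nonnegative $\psi_r$ gives $\langle|u|,(\Ls+1)\psi_r\rangle\leq 0$, and passing to the limit $r\to\infty$ yields $\langle|u|,f\rangle\leq 0$. Since $|u|\geq 0$ and $f\geq 0$ in $C^\infty_c$ is arbitrary, $|u|=0$ and thus $u=0$.

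The principal technical obstacle is this last step: producing the nonnegative cutoffs $\psi_r\in C^\infty_c$ of $(\Ls+1)^{-1}f$ and controlling the $L^2$-decay of the error terms in $(\Ls+1)\psi_r$ at infinity, which requires both sufficient decay of the resolvent kernel (via heat-kernel bounds on $G$) and the specific properties of the approximate unities recalled in Section~\ref{Sec:preliminaries}. This is precisely where the Euclidean argument of~\cite[Theorem X.28]{ReedSimon} must be adapted to our Lie-group setting, as carried out in~\cite[Theorem 3.2]{BC1}.
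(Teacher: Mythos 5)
Your proposal is correct and follows essentially the same route as the paper, which defers to~\cite[Theorem 3.2]{BC1} and~\cite[Theorem X.28]{ReedSimon}: reduce essential self-adjointness of $(\Ls+V)|_{C^\infty_c}$ to triviality of $\mathrm{Ker}(T^*+1)$ via semiboundedness, apply Kato's inequality for $\Ls$ to get $(\Ls+1)|u|\leq 0$, and then conclude $|u|=0$ by testing against nonnegative cutoffs $\chi_r(\Ls+1)^{-1}f$ built from the approximate unities of Section~\ref{Sec:preliminaries}, using positivity and hypoellipticity of $(\Ls+1)^{-1}$. The details you supply (including $Vu\in L^1_\loc$ by Cauchy--Schwarz, the expansion of $(\Ls+1)(\chi_r g)$, and the $L^2$-decay of the error terms on the annuli where $\nabla_\Hc\chi_r,\Ls\chi_r$ live) are all consistent with the paper's stated strategy.
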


\section{A first characterization}\label{Sec:Firstcar}
Our first result is inspired by~\cite{KS}, and concerns a characterization of the discreteness of the spectrum of $\Hs_V$ in terms of the decay of the $L^2$-norm of functions in $\Dom(Q)$ outside compact sets. It may be thought of as one of the cornerstones of the paper. In order to prove it, we need a few lemmas. 

The first one is elementary. Its proof is an easy adaptation of~\cite[Theorems 2.32 and 2.33]{AF}, but for the ease of the reader we provide all the details.

\begin{lemma}\label{lem:2}
Suppose $p\in [1,\infty)$. Then a subset  $A \subset L^p$ is relatively compact if and only if $A_r = \{ \mathbf{1}_{B(e,r)} f\colon f\in A\}$ is relatively compact for all $r>0$, and 
\begin{equation}\label{AKcompl}
	\lim_{r\to \infty} \sup_{f\in A} \int_{G\setminus B(e,r)} \abs{f}^p\,\dd \mu=0.
\end{equation}
\end{lemma}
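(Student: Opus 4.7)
The statement is a direct analogue of the familiar Kolmogorov--Riesz criterion in $L^p$, but with the (localised) equicontinuity hypothesis replaced by the assumption that the restrictions $A_r$ are already known to be relatively compact. Both directions reduce to standard total-boundedness arguments once this is recognised.

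For the necessity direction, I would start from the observation that the multiplication operator $T_r\colon f\mapsto \mathbf{1}_{B(e,r)}f$ is bounded (indeed of norm $1$) on $L^p$, so $A_r=T_r(A)$ is relatively compact as the continuous image of a relatively compact set. For the tail condition~\eqref{AKcompl}, given $\epsilon>0$ I would cover $A$ by finitely many $L^p$-balls of radius $\epsilon/2$ with centres $f_1,\dots,f_N\in L^p$. Since each single $f_i$ satisfies $\|f_i\mathbf{1}_{G\setminus B(e,r)}\|_p\to 0$ as $r\to\infty$ by dominated convergence, I can choose $r$ large enough that this quantity is $<\epsilon/2$ uniformly in $i=1,\dots,N$. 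Then for any $f\in A$ with $\|f-f_i\|_p<\epsilon/2$ I get
\[
\Big(\int_{G\setminus B(e,r)}\abs{f}^p\,\dd\mu\Big)^{1/p}\leq \|(f-f_i)\mathbf{1}_{G\setminus B(e,r)}\|_p+\|f_i\mathbf{1}_{G\setminus B(e,r)}\|_p<\epsilon,
\]
which gives~\eqref{AKcompl}.

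For the sufficiency direction, since $L^p$ is complete it suffices to prove that $A$ is totally bounded. Given $\epsilon>0$, use~\eqref{AKcompl} to choose $R>0$ with $\sup_{f\in A}\|f\mathbf{1}_{G\setminus B(e,R)}\|_p<\epsilon/2$. By hypothesis $A_R$ is relatively compact, hence totally bounded, so there exist $g_1,\dots,g_N\in L^p$ such that for every $f\in A$ one has $\|\mathbf{1}_{B(e,R)}f-g_i\|_p<\epsilon/2$ for some $i$. Splitting $f=\mathbf{1}_{B(e,R)}f+\mathbf{1}_{G\setminus B(e,R)}f$ and applying the triangle inequality yields $\|f-g_i\|_p<\epsilon$, so $\{g_1,\dots,g_N\}$ is a finite $\epsilon$-net for $A$.

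There is no serious obstacle here; the only thing to be careful about is keeping the estimates uniform in $f\in A$ in both steps, which is precisely what is built into~\eqref{AKcompl} and into the finiteness of the covers used in the two directions. The completeness of $L^p$ (true because $p<\infty$) is used implicitly when passing between ``totally bounded'' and ``relatively compact''.
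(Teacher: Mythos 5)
Your proof is correct and follows essentially the same route as the paper's: continuity of the truncation operator plus a finite $\eps/2$-net for necessity, and an $\eps$-net assembled from the tail bound and the total boundedness of $A_R$ for sufficiency. The only differences are cosmetic (an $\eps/2$ split instead of $\eps/3$, and net centers taken in $L^p$ rather than in $A_R$ itself), so there is nothing to add.
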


\begin{proof}
Assume that $A$ is relatively compact, and fix $\epsilon>0$. Then, there is a finite subset $F$ of $A$ such that 
	\[
	A\subseteq \bigcup_{f\in F} B(f,\eps/2),
	\]
where the balls are meant with respect to the $L^p$ norm. In other words, for all $f\in A$ there is $f_0\in F$ such that $\norm{f-f_0}_p<\eps/2$. Moreover, there is $r>0$ such that $\norm{(1-\mathbf{1}_{B(e,r)}) f_1}_p<\eps/2$ for all $f_1\in F$. Hence 
	\[
\|f (1- \mathbf{1}_{B(e,r)})\|_p\leq \|f_0(1-\mathbf{1}_{B(e,r)}) \|_p+\| (f-f_0) (1-\mathbf{1}_{B(e,r)})\|_p<\eps.
	\]
This proves~\eqref{AKcompl}. Since the linear map $f\mapsto \mathbf{1}_{B(e,r)} f$ is continuous from $L^p$ into itself, $ A_r$ is relatively compact for every $r>0$, and the ``only if'' part is proved.
	
	Conversely, assume that $A_r$ is relatively compact for every $r>0$ and that~\eqref{AKcompl} holds. Fix $\eps>0$, and choose $r>0$ so that $\norm{(1-\mathbf{1}_{B(e,r)})f}_p<\eps/3$ for every $f\in A$. In addition, choose a finite subset $F$ of $A$ such that
	\[
	A_r \subseteq  \bigcup_{f\in F_r}B(f,\eps/3), \qquad F_r = \{ \mathbf{1}_{B(e,r)} f\colon f\in F\}.
	\]
Then $A\subseteq \bigcup_{f\in F} B(f,\eps)$, whence $A$ is relatively compact.
\end{proof}

For the next lemma we shall need Sobolev spaces on $G$. Given $\alpha\geq 0$ and $p\in (1,\infty )$, define 
\[
L^p_\alpha = \{ f\in L^p\colon \Ls^{\alpha/2} f\in L^p\},
\]
endowed with the norm $f\mapsto \norm{f}_p+\norm{\mathcal{L}^{\alpha/2} f}_p$. These spaces have been introduced and studied in~\cite{BPTV}.  In particular they interpolate well with respect to the complex method, see~\cite[Lemma 3.1]{BPTV}, and by~\cite[Theorem 4.4]{BPTV}, given $\alpha_1,\alpha_2\geq0$ and $p_1,p_2\in (1,\infty)$,
\begin{equation}\label{Sobolevembedding}
L^{p_1}_{\alpha_1} \subseteq  L_{\alpha_2}^{p_2}  \qquad   \mbox{if} \quad \frac{1}{p_1}-\frac{\alpha_1-\alpha_2}{d_0}\leq \frac{1}{p_2}\leq \frac{1}{p_1},
\end{equation}
where we recall that $d_0$ is the local dimension of $G$ determined by~\eqref{pallepiccole}.
The following lemma is the generalization of a result in~\cite{FollandSE} to the case of general Lie groups. 

\begin{lemma}\label{lem:3}
Let $\varphi \in C^\infty_c$ be given. Then the linear map
\[
T_\varphi \colon L_{\alpha_1}^{p_1} \to L^{p_2}, \qquad T_\varphi f =f \varphi,
\]
is compact for all $p_1,p_2\in (1,\infty)$ and  $\alpha_1\geq 0$ such that $0\leq \frac{1}{p_1}-\frac{1}{p_2}<\frac{\alpha_1}{d_0}$.
\end{lemma}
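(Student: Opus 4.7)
The plan is to combine the Sobolev embedding~\eqref{Sobolevembedding} with a heat-semigroup regularisation argument of Rellich--Kondrachov type. Since by hypothesis $\frac{1}{p_1}-\frac{1}{p_2}<\frac{\alpha_1}{d_0}$, one may pick $\alpha_2>0$ with $\alpha_2\leq \alpha_1-d_0\bigl(\frac{1}{p_1}-\frac{1}{p_2}\bigr)$, so that~\eqref{Sobolevembedding} yields a continuous inclusion $L^{p_1}_{\alpha_1}\hookrightarrow L^{p_2}_{\alpha_2}$. It therefore suffices to prove that, for every $\alpha>0$ and every $\varphi\in C^\infty_c$, the multiplication operator $T_\varphi\colon L^{p_2}_\alpha\to L^{p_2}$ is compact.

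For this reduced statement, I would take a sequence $(f_n)$ in the unit ball of $L^{p_2}_\alpha$ and exploit the heat semigroup $(e^{-t\Ls})_{t>0}$ associated with $\Ls$ on $L^{p_2}$. Applying the $L^{p_2}$ functional calculus of~\cite{BPTV} to $\lambda\mapsto 1-e^{-t\lambda}$, one obtains, for $t\in(0,1]$ and $g\in L^{p_2}_\alpha$, the quantitative regularisation estimate
\[
\|g-e^{-t\Ls}g\|_{p_2}\leq C\, t^{\alpha/2}\,\|\Ls^{\alpha/2} g\|_{p_2},
\]
so that for every fixed $\epsilon>0$ one can choose $t>0$ small enough with $\sup_n \|\varphi(f_n-e^{-t\Ls}f_n)\|_{p_2}<\epsilon$. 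An $\epsilon/3$ argument then reduces matters to extracting, for a fixed such $t$, a convergent subsequence of $(\varphi\, e^{-t\Ls} f_n)_n$ in $L^{p_2}$.

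This final step is a consequence of Arzelà--Ascoli. Since $\Ls$ is hypoelliptic, the heat kernel $p_t$ is smooth on $G\times G$, and by left invariance its left-invariant derivatives are bounded in the space variable uniformly on $G$ at fixed $t>0$; combined with the uniform $L^{p_2}$ bound on $(f_n)$ this shows that $(e^{-t\Ls}f_n)_n$ is uniformly bounded and equicontinuous on the compact set $K=\supp\varphi$. Extracting a subsequence converging uniformly on $K$ and then multiplying by $\varphi$ yields convergence in $L^{p_2}$, as desired.

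The main obstacle is the quantitative heat-semigroup estimate $\|g-e^{-t\Ls}g\|_{p_2}\lesssim t^{\alpha/2}\|\Ls^{\alpha/2}g\|_{p_2}$, which on $L^2$ is immediate from the spectral theorem but for $p_2\neq 2$ requires $L^{p_2}$-spectral-multiplier bounds for $\Ls$; these are available precisely in the framework of~\cite{BPTV} adopted here. An alternative, more elementary route would be to replace the heat semigroup by convolution with an approximate identity $(\eta_j)$ and prove an analogous estimate $\|g-g*\eta_j\|_{p_2}\lesssim \omega(j)\|\Ls^{\alpha/2}g\|_{p_2}$ (which then reduces compactness to the smoothness of $g*\eta_j$), but this seems essentially equivalent in difficulty.
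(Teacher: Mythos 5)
Your argument takes a genuinely different route from the paper's. The paper localizes in a chart $(U,\psi)$ via a partition of unity, observes that a left-invariant differential operator of order $k$ is a combination of at most $km$ horizontal derivatives (with $m$ the step of the bracket generation), deduces that $f\mapsto (T_\varphi f)\circ\psi^{-1}$ maps $L^{p_1}_{km}$ into the Euclidean Sobolev space $W^{p_1,k}(\psi(U))$, and then imports the classical Rellich--Kondrachov theorem; the fractional case $L^{p_1}_{\alpha_1}\to L^{p_1}$ then comes from complex interpolation between $L^{p_1}_{km}$ and $L^{p_1}$ (using that the endpoint operators are respectively compact and bounded), and the case $p_2>p_1$ is handled exactly as in your first reduction, via the Sobolev embedding. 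Your approach instead stays intrinsic and replaces the chart-based Rellich--Kondrachov argument by heat-semigroup regularization plus Arzel\`a--Ascoli. Both routes are viable, but yours leans on ingredients that the paper's cited reference \cite{BPTV} does not obviously supply, and you yourself flag this.

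Two points deserve more care. First, \cite{BPTV} is an embedding/algebra paper; there is no ``$L^{p_2}$ functional calculus'' to cite there. What \emph{is} available from \cite[Lemma 3.1]{BPTV} is the complex interpolation identity $L^{p_2}_\alpha=[L^{p_2},L^{p_2}_2]_{\alpha/2}$, and your regularization bound is best obtained from that: the identity $g-e^{-t\Ls}g=\int_0^t e^{-s\Ls}\Ls g\,\dd s$ and the $L^{p_2}$-contractivity of the semigroup give $\norm{g-e^{-t\Ls}g}_{p_2}\leq t\,\norm{\Ls g}_{p_2}$, while trivially $\norm{g-e^{-t\Ls}g}_{p_2}\leq 2\norm{g}_{p_2}$; interpolating between these two bounds yields $\norm{g-e^{-t\Ls}g}_{p_2}\leq C\,t^{\alpha/2}\norm{g}_{L^{p_2}_\alpha}$ for $\alpha\in[0,2]$, which suffices since one may assume $\alpha\leq 2$. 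This is cleaner and avoids any invocation of $L^p$ spectral multipliers. Second, your Arzel\`a--Ascoli step tacitly uses that the integral kernel of $e^{-t\Ls}$ lies in $L^{p_2'}$ uniformly on the compact $K=\supp\varphi$, with the same for its derivatives; this requires decay estimates (e.g.\ Gaussian upper bounds) for the heat kernel of the \emph{drift} sub-Laplacian $\Ls$ on a possibly non-unimodular, non-polynomial-growth $G$, which are true but nontrivial and not among the paper's references. The paper's chart-based argument sidesteps all of this, buying elementariness at the modest cost of a local coordinate computation. If you want to keep your route, you should replace the ``functional calculus'' appeal by the interpolation argument above and add an explicit reference for the heat kernel decay.
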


\begin{proof}
	First of all we notice that, by means of a finite partition of the unity, we may reduce to the case when $\varphi$ is supported in the domain of a local chart $(U,\psi)$ with $U$ being relatively compact. Now, define
	\[
	\mathcal{H}^{(1)}=\mathcal{H}, \qquad \mathcal{H}^{(k+1)} =\mathcal{H}^{(k)} + [\mathcal{H}^{(1)},\mathcal{H}^{(k)}], \qquad k\geq  1,
	\]
	so that $(\mathcal{H}^{(k)})$ is an increasing sequence of subspaces of $\gf$ whose union is $\gf$ by the bracket-generating property of $\mathcal{H}$.
	
	Let $m$ be the smallest integer such that $\mathcal{H}^{(m)}=\gf$. Then, every left-invariant differential operator of order at most $k$ on $G$ can be decomposed as a finite linear combination of differential operators of the form $X_{j_1}\cdots X_{j_\ell}$, where $\ell\leq km$ and $j_1,\dots, j_\ell\in \{1,\dots,\nu\}$. Then, the map $f\mapsto (T_\varphi f) \circ \psi^{-1}$ maps $L_{k m}^{p_1}$ continuously into $W^{p_1,k}(\psi(U))$ for every $k\in \N$, the latter being the Euclidean Sobolev space on $\psi(U)$ (cf.~\cite[Proposition 3.3]{BPTV}). Since we may assume that $\psi(U)$ is a ball of some Euclidean space, the classical Rellich--Kondrakov theorem implies that $T_\varphi\colon L_{k m}^{p_1}\to L^{p_1}$ is compact if $k$ is large enough.
	
	The result follows now by interpolation. Indeed, by~\cite[Lemma 3.1]{BPTV}, for all $\alpha_1>0$ the space $L^{p_1}_{\alpha_1}$ is intermediate between $L_{k m}^{p_1}$ and $L^{p_1}$ (up to taking $k$ large enough). Hence, since trivially $T_\varphi \colon L^{p_1} \to L^{p_1}$, by~\cite[Theorems 3.8.1 and 4.7.1]{BerghLofstrom} one gets that $T_\varphi$ induces a compact linear map from $L_{\alpha_1}^{p_1}$ to $L^{p_1}$. Hence, the statement is proved for $p_2=p_1$.

Let now $\alpha_1>0$ and $p_2>p_1$ be such that $\frac{1}{p_1}-\frac{1}{p_2}<\frac{\alpha_1}{d_0}$. Choose $\beta_1<\alpha_1$ such that 
\[
\frac{1}{p_1}-\frac{1}{p_2}<\frac{1}{d_0} (\alpha_1 - \beta_1).
\]
Then, by combining  the  map $T_\varphi \colon L^{p_2}_{\beta_1} \to L^{p_2}$ which is compact by the previous step, with the continuous embedding $L^{p_1}_{\alpha_1} \subseteq   L^{p_2}_{\beta_1}$ given by~\eqref{Sobolevembedding}, one gets that $T_\varphi$ induces a compact map from $L^{p_1}_{\alpha_1} $ to $L^{p_2}$.
\end{proof}

We are now in a position to state the first aforementioned characterization of the discreteness of the spectrum of $\Hs_V$. It will be a fundamental criterion for the remainder of the paper.

\begin{proposition}\label{prop1}
	The operator $\Hs_V$ has purely discrete spectrum if and only if 
	\[
	\lim_{r\to \infty} \sup_{\substack{ f\in \Dom(Q) \\ Q(f)\leq 1}} \int_{G\setminus B(e,r)} \abs{f}^2\,\dd \mu=0.
	\]
\end{proposition}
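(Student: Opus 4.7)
My approach would be to recast the purely discrete spectrum condition for $\Hs_V$ as compactness of the form-domain embedding $(\Dom(Q), Q^{1/2}) \hookrightarrow L^2$, then apply the Kolmogorov-type criterion of Lemma~\ref{lem:2} together with the local compactness of multiplication by $C_c^\infty$ functions provided by Lemma~\ref{lem:3}.

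First I would establish the reformulation. Since $V\geq 1$ implies $\Hs_V \geq 1$, the operator $\Hs_V$ has purely discrete spectrum if and only if its resolvent is compact on $L^2$. Because $\Hs_V$ is the self-adjoint operator associated with the closed positive form $Q$, one has $\Dom(\Hs_V^{1/2})=\Dom(Q)$ and $\|\Hs_V^{1/2}f\|_2^2=Q(f)$, so compactness of the resolvent is equivalent to compactness of the inclusion of the form domain into $L^2$, i.e.\ to relative compactness in $L^2$ of the unit ball $A:=\{f\in\Dom(Q):Q(f)\leq 1\}$.

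The proposition would then amount to proving: $A$ is relatively compact in $L^2$ if and only if $\sup_{f\in A}\int_{G\setminus B(e,r)}|f|^2\,\dd\mu\to 0$ as $r\to\infty$. One implication follows from Lemma~\ref{lem:2} applied with $p=2$. For the other, Lemma~\ref{lem:2} reduces the task to showing that $A_r:=\{\mathbf{1}_{B(e,r)}f:f\in A\}$ is relatively compact in $L^2$ for every fixed $r>0$. Here I would exploit that $Q(f)\leq 1$, combined with $V\geq 1$, yields $\|f\|_2\leq 1$ and $\|\nabla_\Hc f\|_2\leq 1$, so that by the identity $\|\Ls^{1/2}f\|_2=\|\nabla_\Hc f\|_2$ valid on the form domain of $\Ls$ (which contains $\Dom(Q)$), $A$ sits in a bounded subset of the Sobolev space $L^2_1$. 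Picking $\varphi_r\in C_c^\infty$ with $\varphi_r\equiv 1$ on $B(e,r)$, Lemma~\ref{lem:3} (with $p_1=p_2=2$, $\alpha_1=1$, so that $0=\tfrac{1}{p_1}-\tfrac{1}{p_2}<\tfrac{1}{d_0}$) makes $f\mapsto\varphi_r f$ a compact map $L^2_1\to L^2$; since $\mathbf{1}_{B(e,r)}f=\mathbf{1}_{B(e,r)}(\varphi_r f)$ and multiplication by $\mathbf{1}_{B(e,r)}$ is continuous on $L^2$, relative compactness of $A_r$ follows.

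The main obstacle, I expect, is not any single step but rather the clean identification $\Dom(Q)\hookrightarrow L^2_1$ with quantitative control of the $L^2_1$-norm by $Q^{1/2}$, which is what bridges the abstract form-compactness to the concrete Sobolev compactness of Lemma~\ref{lem:3}. This rests on $\|\Ls^{1/2}f\|_2=\|\nabla_\Hc f\|_2$ on the form domain of $\Ls$, a consequence of the symmetry computation recalled in Section~\ref{Sec:preliminaries} together with the spectral definition of $L^2_1$; once this is taken for granted, the two directions are short.
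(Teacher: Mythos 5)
Your proof is correct and follows essentially the same route as the paper: reduce purely discrete spectrum to relative compactness in $L^2$ of the $Q$-unit ball $A$, observe that $A$ lies in a bounded subset of $L^2_1$ (the paper cites \cite[Proposition 3.3]{BPTV} for the identification of the form domain of $\Ls$ with $L^2_1$, which is exactly your $\|\Ls^{1/2}f\|_2=\|\nabla_\Hc f\|_2$ step), apply Lemma~\ref{lem:3} to get precompactness of $\varphi A$, and conclude by Lemma~\ref{lem:2}. Your extra remark that $A_r=\mathbf{1}_{B(e,r)}(\varphi_r A)$ is a useful small clarification of the last step, but the argument is the same.
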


\begin{proof}
	Observe first that $\Hs_V$ has purely discrete spectrum if and only if $\sqrt{\Hs_V}$ has purely discrete spectrum. This holds if and only if the (bounded) inverse $\sqrt{\Hs_V}\,^{-1}$  of $\sqrt{\Hs_V}$, which exists since $\Hs_V \geq 1$, is compact; cf.~\cite[Theorem 11.3.13]{Ol}. This is, in turn, equivalent to saying that the closed subset 
	\[
	A=\{f\in \Dom(Q)\colon Q(f)\leq 1\}
	\]
	of $L^2$ is compact. Observe that $ A$ is contained in the unit ball of $L_1^{2}$ (cf.~\cite[Proposition 3.3]{BPTV}), so that for every $\varphi \in C^\infty_c$ the set $\varphi A = \{ \varphi f \colon f\in A\}$ is precompact in $L^2$ by Lemma~\ref{lem:3}. The statement then follows from Lemma~\ref{lem:2}.
\end{proof}

\begin{remark}
The results of this section can be extended to the case of general relatively invariant measures, with minor modifications. Consider a continuous positive character $\chi$ of $G$, the measure $\mu_\chi$ with density $\chi\delta^{-1}_G$ with respect to $\mu$, and the sub-Laplacian
\begin{equation}\label{deltachi}
\Delta_\chi = -\sum_{j=1}^\nu (X_j^2 + c_j X_j), \qquad c_j = (X_j \chi)(e).
\end{equation}
Then $\Delta_\chi $ is essentially self-adjoint on $L^2(\mu_\chi)$, and all the sub-Laplacians with drift which are symmetric on $L^2(\eta)$ for some measure $\eta$ are of the form~\eqref{deltachi}, with $\eta=\mi_\chi$; cf.~\cite{HMM, BPTV}. One can consider the form 
\[
Q_\chi \colon (f,g)\mapsto \int_{G} (\nabla_\Hc f \cdot \nabla_\Hc \overline{g} +  V f \overline{g} )\,\dd \mu_\chi
\]
with its natural domain, and obtain a Schr\"odinger operator $\mathcal{H}_{V,\chi}$ which coincides, on its domain, with the operator $\Delta_\chi + V$ meant in the distributional sense. When $\chi= \delta_G$, one obtains the form $Q$ and the Schr\"odinger operator $\mathcal{H}_V$. All the results of this section hold with the obvious modifications (and the same proofs) in this more general setting; we shall not go into details here, however, as they would be an unnecessary complication. All the results from the next section on will indeed require a left-invariant measure.
\end{remark}

\section{General necessary and sufficient conditions}\label{Sec:necandsuff}

\subsection{Dirichlet and Neumann spectra}
Following~\cite{KS}, for all non-empty open subsets $U$ of $G$ we define the bottoms of the Dirichlet and Neumann spectra of $\Hs_V$, respectively, as
\begin{align*}
\sigma_D(U)& =\inf\bigg\{\int_U \big(\abs{\nabla_\Hc f}^2+ V \abs{f}^2\big)\,\dd \mu \colon f\in C^\infty_c(U), \; \norm{f}_2=1  \bigg\},\\
\sigma_N(U) &= \inf\bigg\{\int_U \big(\abs{\nabla_\Hc f}^2+ V \abs{f}^2\big)\,\dd \mu\colon f\in C^\infty(G), \; \norm{\mathbf{1}_Uf}_2=1\bigg\}.
\end{align*}
It immediately follows from the definition that $\sigma_N(U)\leq \sigma_D(U)$. The aim of this section is to characterize the discreteness of the spectrum of $\Hs_V$ in terms of the behaviour of $\sigma_D(B)$ and $\sigma_N(B)$ when $B$ is a ball; see Theorem~\ref{teoKS} below. We begin with a partial result.

\begin{proposition}\label{prop:3}
	If $\Hs_V$ has purely discrete spectrum, then $\sigma_D(B(x,r))\to \infty$ as $x\to \infty$, for all $r>0$. Conversely, if there is $r>0$ such that $\sigma_N(B(x,r))\to \infty$ as $x\to \infty$, then $\Hs_V$ has purely discrete spectrum.
\end{proposition}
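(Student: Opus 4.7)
The plan is to deduce both implications from Proposition~\ref{prop1}: a concentration argument for the necessity, and a covering plus Neumann-inequality argument for the sufficiency.

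For the necessity, I would argue by contradiction. Suppose $\Hs_V$ has purely discrete spectrum while there exist $r>0$, $M>0$, and a sequence $(x_n)\subset G$ with $d(x_n,e)\to\infty$ such that $\sigma_D(B(x_n,r))\leq M$ for every $n$. From the definition of $\sigma_D$ I can pick $f_n\in C^\infty_c(B(x_n,r))$ with $\norm{f_n}_2=1$ and $\int(\abs{\nabla_\Hc f_n}^2+V\abs{f_n}^2)\,\dd\mu\leq M+1$. Then $g_n:=f_n/\sqrt{M+1}$ lies in $\Dom(Q)$ with $Q(g_n)\leq 1$; since $\supp g_n\subseteq B(x_n,r)$ and $d(x_n,e)\to\infty$, for each fixed $R>0$ one eventually has $B(x_n,r)\cap B(e,R)=\emptyset$, and hence $\int_{G\setminus B(e,R)}\abs{g_n}^2\,\dd\mu=1/(M+1)$, which contradicts Proposition~\ref{prop1}.

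For the sufficiency, fix $r>0$ with $\sigma_N(B(x,r))\to\infty$ as $d(x,e)\to\infty$. By Proposition~\ref{prop1} it is enough to prove that $\int_{G\setminus B(e,R)}\abs{f}^2\,\dd\mu\to 0$ as $R\to\infty$, uniformly over $f\in\Dom(Q)$ with $Q(f)\leq 1$. The geometric input is a covering of $G$ by balls of radius $r$ with bounded overlap: I would choose a maximal family $\{x_i\}_{i\in I}\subset G$ with $d(x_i,x_j)\geq r/2$ for $i\neq j$, so that $\{B(x_i,r/2)\}_{i\in I}$ covers $G$ and $\{B(x_i,r/4)\}_{i\in I}$ is pairwise disjoint. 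By left-invariance of $\mu$, the standard volume-packing argument gives that each point of $G$ lies in at most $N_r:=\mu(B(e,5r/4))/\mu(B(e,r/4))<\infty$ of the balls $\{B(x_i,r)\}_{i\in I}$.

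The analytic input is the Neumann bound
\[
\sigma_N(B(x,r))\int_{B(x,r)}\abs{f}^2\,\dd\mu\leq \int_{B(x,r)}(\abs{\nabla_\Hc f}^2+V\abs{f}^2)\,\dd\mu, \qquad f\in\Dom(Q),
\]
which I would establish by approximating $f$ in $\Dom(Q)$ by $C^\infty_c\subset C^\infty(G)$ functions via Lemma~\ref{lem:5}, applying the defining inequality of $\sigma_N$ to the approximants, and passing to the limit. This extension step, though routine, is the main point requiring care, since $\sigma_N$ is defined via $C^\infty(G)$ functions. To conclude, fix $\epsilon>0$ and pick $R_0$ so that $\sigma_N(B(x,r))>1/\epsilon$ whenever $d(x,e)>R_0$. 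For $R>R_0+r$, any index $i$ with $B(x_i,r)\cap(G\setminus B(e,R))\neq\emptyset$ satisfies $d(x_i,e)>R_0$; the corresponding balls still cover $G\setminus B(e,R)$, and summing the Neumann bound on such balls with the bounded overlap yields
\[
\int_{G\setminus B(e,R)}\abs{f}^2\,\dd\mu\leq \epsilon N_r Q(f)\leq \epsilon N_r,
\]
uniformly over the $Q$-unit ball, which finishes the proof via Proposition~\ref{prop1}.
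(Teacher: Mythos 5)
Your proof is correct and follows essentially the same route as the paper: both directions rest on Proposition~\ref{prop1}, and the sufficiency direction uses a bounded-overlap covering by $r$-balls together with the Neumann lower bound on each ball. The only cosmetic differences are that the paper invokes Anker's covering lemma rather than constructing the cover by a maximal packing plus volume argument, and that the paper establishes the final global inequality $\int_{G\setminus B(e,R+2r)}\abs{f}^2\,\dd\mu\leq (n/\widetilde\sigma_R)\,Q(f)$ for $f\in C^\infty_c$ and then extends by density of $C^\infty_c$ in $\Dom(Q)$, whereas you extend the local Neumann bound first; both are equivalent, and the step you flag as ``the main point requiring care'' is in fact immediate since $C^\infty_c\subset C^\infty(G)$, so the defining inequality for $\sigma_N$ applies to the approximants directly and passes to the limit by the convergence of $\nabla_\Hc f_n$ and $\sqrt{V}f_n$ in $L^2$.
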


\begin{proof}
	Assume that $\Hs_V$ has purely discrete spectrum. Fix $\eps>0$, and observe that by Proposition~\ref{prop1}  there is $R>0$ such that 
	\[
	\int_{G\setminus B(e,R)} \abs{f}^2\,\dd \mu < \eps
	\]
for all $f\in \Dom(Q)$ such that $Q(f)\leq 1$. Therefore, if $x\in G\setminus B(e,R+r)$ and $f\in C^\infty_c(B(x,r))$ satisfies $\norm{f}_2=1$, we have $Q(f)\geq\eps^{-1} $, whence $\sigma_D(B(x,r))\geq \eps^{-1}$. The conclusion follows by the arbitrariness of $\eps$.
	
Conversely, assume that there is $r>0$ such that $\sigma_N(B(x,r))\to \infty$ as $x\to \infty$. By~\cite[Lemma 1]{Anker}, see also~\cite[Lemma 2.3]{B1}, there are $n\in\N$ and a countable subset $\mathfrak{U}$ of $G$ such that 
\[
G=\bigcup_{x\in \mathfrak{U}} B(x,r)
\] 
and such that, for all $x_0\in \mathfrak{U}$, the intersection $B(x_0,r) \cap B(x,r)$ is non-empty for at most $n$ elements $x \in \mathfrak{U}$.
	
For $R>0$, define 
\[
\widetilde \sigma_R = \inf \{ \sigma_N(B(x,r))\colon x\in \mathfrak{U},\; \; x\not \in B(e,R+r) \},
\]
so that,  by assumption, $\widetilde \sigma_R\to \infty$ as $R\to\infty$. Then, define 
\[
I_R = \{ x \in \mathfrak{U}\colon  x\not \in  B(e,R +r)\}.
\]
If $f\in C^\infty_c$, then
\begin{align*}
	\int_{G\setminus B(e,R)} (\abs{\nabla_\Hc f}^2+ V \abs{f}^2)\,\dd \mu
	&\geq \frac{1}{n}\sum_{x\in I_R} \int_{B(x,r)} (\abs{\nabla_\Hc f}^2+ V \abs{f}^2)\,\dd \mu\\
		&\geq\frac{\widetilde \sigma_R}{n}\sum_{x\in I_R} \int_{ B(x,r)} \abs{f}^2\,\dd \mu \\
		&\geq  \frac{\widetilde \sigma_R}{n} \int_{G\setminus B(e,R+2r)} \abs{f}^2\,\dd \mu.
\end{align*}
	Therefore,
	\[
	\int_{G\setminus B(e,R+2r)} \abs{f}^2\,\dd \mu \leq \frac{n}{\widetilde \sigma_R} Q(f)
	\]
	for every $f\in  C^\infty_c$. Since $C^\infty_c$ is dense in $\Dom(Q)$ by Lemma~\ref{lem:5}, it follows that 
	\[
	\lim_{R\to \infty} \sup_{Q(f)\leq 1}\int_{G\setminus B(e,R+2r)} \abs{f}^2\,\dd \mu=0,
	\]
	hence $\mathcal{H}_V$ has purely discrete spectrum by Proposition~\ref{prop1}.
\end{proof}

\begin{corollary}\label{corprop:3}
If $\Hs_V$ has purely discrete spectrum, then
\[
\lim_{x\to \infty} \int_{B(x,r)} V\, \dd \mu = \infty 
\]
for all $r>0$.
\end{corollary}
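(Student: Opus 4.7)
The plan is to reduce the corollary to Proposition~\ref{prop:3}, which already gives $\sigma_D(B(x,r)) \to \infty$ as $x\to\infty$ for every $r>0$. It then suffices to show that, up to constants independent of $x$, $\int_{B(x,r)} V\,\dd\mu$ dominates $\sigma_D(B(x,r))$. The idea is to test the Rayleigh quotient defining $\sigma_D(B(x,r))$ against a translated bump function and use left-invariance throughout.

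More precisely, I would first fix, once and for all, a nonzero function $\phi\in C_c^\infty(B(e,r))$ with $0\leq \phi\leq 1$, and set
\[
\phi_x(y):=\phi(x^{-1}y), \qquad y\in G.
\]
Because the Carnot--Carathéodory distance is left invariant, $\supp \phi_x \subseteq B(x,r)$, so $\phi_x\in C_c^\infty(B(x,r))$. Because $\mu$ is left invariant, $\norm{\phi_x}_2=\norm{\phi}_2=:c>0$; and because the vector fields $X_j$ are left invariant, $X_j\phi_x(y)=(X_j\phi)(x^{-1}y)$, so $\norm{\nabla_\Hc \phi_x}_2=\norm{\nabla_\Hc\phi}_2=:M<\infty$. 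All three quantities are therefore independent of $x$.

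Plugging $\phi_x/c$ into the definition of $\sigma_D(B(x,r))$ and using $|\phi_x|^2\leq \mathbf{1}_{B(x,r)}$, I get
\[
\sigma_D(B(x,r))\leq \frac{1}{c^2}\int_{B(x,r)}\bigl(\abs{\nabla_\Hc\phi_x}^2 + V \abs{\phi_x}^2\bigr)\,\dd \mu \leq \frac{M^2}{c^2} + \frac{1}{c^2}\int_{B(x,r)} V\,\dd \mu,
\]
so that
\[
\int_{B(x,r)} V\,\dd \mu \geq c^2\,\sigma_D(B(x,r)) - M^2.
\]
Letting $x\to\infty$ and invoking Proposition~\ref{prop:3} finishes the proof.

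I do not foresee a genuine obstacle here: the only subtle point is the uniform-in-$x$ behaviour of the test function, but this is automatic from the left-invariance of $d$, $\mu$, and $\Hc$. The essential content is already contained in Proposition~\ref{prop:3}; the corollary is a quantitative translation of ``$\sigma_D$ blows up at infinity'' into ``$V$ has divergent ball integrals at infinity''.
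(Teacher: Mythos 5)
Your proof is correct and follows essentially the same route as the paper: both translate a fixed bump function supported in $B(e,r)$ to $B(x,r)$, use left-invariance of $d$, $\mu$, and $\nabla_\Hc$ to get $x$-independent constants, test the Dirichlet Rayleigh quotient, and then invoke Proposition~\ref{prop:3}. The only cosmetic difference is the normalization (you divide by $\norm{\phi}_2$ and use $0\le\phi\le1$; the paper takes $\norm{\psi}_2=1$ and bounds $|\psi_x|^2$ by $\norm{\psi}_\infty^2$), which changes nothing.
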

\begin{proof}
Fix $r>0$ and $\psi \in C_c^\infty(B(e,r))$ so that $\| \psi\|_2=1$. For every $x\in G$, define $\psi_x =L_x\psi$ and observe that $\| \psi_x\|_2= 1$ for all $x\in G$. Then
\begin{align*}
\sigma_D(B(x,r)) 
&\leq \int_{B(x,r)} (|\nabla_\Hc \psi_x|^2 + V|\psi_x|^2) \, \dd \mu  \leq C\bigg( 1+  \int_{B(x,r)} V\, \dd \mu\bigg)
\end{align*}
where $C\coloneqq \max( \|\nabla_\Hc \psi\|_2^2, \|\psi\|_\infty^2)$ is independent of $x$. The conclusion follows, since $\sigma_D(B(x,r))  \to \infty$ for $x\to \infty$ by Proposition~\ref{prop:3}.
\end{proof}

We shall now proceed to show  that $\sigma_D(B(x,r)) \to \infty$ when $x\to \infty$ if and only if the same holds for $\sigma_N(B(x,r))$. This will refine Proposition~\ref{prop:3} into a characterization  of the discreteness of the spectrum of $\Hc_V$, which is Theorem~\ref{teoKS} below. For notational convenience, given a measurable subset $U$ of $G$ such that $0<\mi(U)<\infty$,  we define the mean of $f\in L^1(U)$ as
\[
\dashint_{U} f \,\dd \mu = \frac{1}{\mu(U)} \int_U f \,\dd \mu.
\]
We will also need the Poincaré inequality on $G$, see~\cite[Theorem 3.1]{BPV1}. Given $p\in (1,\infty)$ and $R>0$,  there exists a positive constant $C$ such that
\begin{equation}\label{Poincare}
\dashint_{B\times B} \abs{f(x)-f(y)}^p\,\dd \mu(x)\, \dd \mu(y)\leq C r^p \dashint_{B} \abs{\nabla_\Hc f}^p\,\dd \mu
\end{equation}
for all balls $B$ of radius $r\in (0,R]$  and $f\in C^\infty$. For $x\in G$ and $r>0$, we shall denote by $S_{x,r}$ the space
\begin{equation}\label{Sxr}
S_{x,r} = \bigg\{ f\in C^\infty(G) \colon \int_{B(x,r)}\abs{\nabla_\Hc f}^2\,\dd\mi =1\bigg\}.
\end{equation}
\begin{lemma}\label{prop:4}
Let $x\in G$ and $r>0$ be given. If $(f_k)$ is a sequence in $S_{x,r}$, then
	\[
	\bigg( f_k-\dashint_{B(x,r)}f_k\,\dd \mi \bigg)_{k\in \N}
	\]
	has a convergent subsequence in $L^2(B(x,r))$.
\end{lemma}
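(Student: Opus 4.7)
Setting $B\coloneqq B(x,r)$ and $g_k\coloneqq f_k-\dashint_B f_k\,\dd\mi$, one has $\nabla_\Hc g_k=\nabla_\Hc f_k$ with $\|\nabla_\Hc g_k\|_{L^2(B)}=1$ and $\dashint_B g_k\,\dd\mi=0$. Writing $g_k(y)=\dashint_B(g_k(y)-g_k(z))\,\dd\mi(z)$, Jensen's inequality followed by~\eqref{Poincare} yields $\|g_k\|_{L^2(B)}^2\leq Cr^2$, so $(g_k)$ is bounded in $L^2(B)$. The plan is now to reduce the problem to Lemma~\ref{lem:3} by truncation. For each $\delta\in(0,r)$ pick $\phi_\delta\in C^\infty_c(G)$ with $0\leq\phi_\delta\leq 1$, $\phi_\delta\equiv 1$ on $B(x,r-\delta)$, and $\supp\phi_\delta\subseteq B$. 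The Leibniz rule together with the bounds above gives
\[
\|\phi_\delta g_k\|_2^2+\|\nabla_\Hc(\phi_\delta g_k)\|_2^2\leq C_\delta
\]
uniformly in $k$, and since $\|\Ls^{1/2}h\|_2=\|\nabla_\Hc h\|_2$ for $h\in C^\infty_c$, this means that $(\phi_\delta g_k)_k$ is bounded in $L^2_1(G)$.

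Applying Lemma~\ref{lem:3} with any $\varphi\in C^\infty_c(G)$ such that $\varphi\equiv 1$ on $\supp\phi_\delta$, the sequence $(\varphi\,\phi_\delta g_k)_k=(\phi_\delta g_k)_k$ is precompact in $L^2(G)$. Since $\phi_\delta g_k=g_k$ on $B(x,r-\delta)$, a subsequence of $(g_k)$ converges in $L^2(B(x,r-\delta))$. A standard diagonal argument over $\delta_n\downarrow 0$ then yields a single subsequence $(g_{k_n})$ which is Cauchy in $L^2(B(x,r-\delta))$ for every $\delta>0$.

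The main obstacle is to upgrade this to Cauchyness in $L^2(B)$; equivalently, to establish the uniform tail bound
\[
\sup_k\|g_k\|_{L^2(B\setminus B(x,r-\delta))}\to 0\qquad\text{as }\delta\to 0,
\]
which does not follow from boundedness in $L^2(B)$ alone. My plan is to obtain it from a Sobolev--Poincar\'e inequality on $B$, which is available in the present setting by the Haj\l asz--Koskela machinery from the local doubling~\eqref{localdoubling} and~\eqref{Poincare}: one gets $\|g_k\|_{L^p(B)}\leq C$ for some $p>2$ depending on the local dimension $d_0$ (e.g.\ $p=2d_0/(d_0-2)$ when $d_0>2$; any $p<\infty$ works otherwise). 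A H\"older estimate then gives
\[
\|g_k\|_{L^2(B\setminus B(x,r-\delta))}\leq C\,\mi(B\setminus B(x,r-\delta))^{1/2-1/p},
\]
which vanishes uniformly in $k$ as $\delta\to 0$ since $\bigcap_{\delta>0}(B\setminus B(x,r-\delta))=\emptyset$ and $\mi(B)<\infty$. Combining this with the diagonal subsequence gives the desired convergence in $L^2(B)$.
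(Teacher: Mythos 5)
Your proof is correct, but follows a genuinely different route than the paper's. The paper invokes the Rellich-type compactness theorem of Haj\l asz--Koskela directly (\cite[Theorem 8.1]{HK}), together with their $L^q$ Sobolev--Poincar\'e inequality (\cite[Theorem 9.7 and Corollary 9.5]{HK}), and the conclusion is then essentially immediate. You instead obtain compactness in the interior of $B$ from the paper's own Lemma~\ref{lem:3}: after multiplying by a cutoff $\phi_\delta$ supported in $B$, the sequence $(\phi_\delta g_k)_k$ is bounded in $L^2_1(G)$, and Lemma~\ref{lem:3} gives precompactness in $L^2$, hence on $B(x,r-\delta)$; you then handle the thin annulus $B\setminus B(x,r-\delta)$ via the same \cite{HK} $L^p$ Sobolev--Poincar\'e bound (for some $p>2$) plus H\"older, and finish by a diagonal argument over $\delta\downarrow 0$. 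Both arguments ultimately hinge on the $L^p$-improved Poincar\'e inequality from \cite{HK}; what yours buys is that the compactness ingredient is replaced by a tool already established in this paper, making the argument more internally self-contained, at the cost of the extra truncation, diagonalization, and uniform-integrability step. One detail you got right and which is essential: the cutoff must be supported \emph{strictly} inside $B$, since $\nabla_\Hc f_k$ is only controlled on $B$ itself; this is exactly why the tail estimate cannot be bypassed, and why the ball-intrinsic Sobolev--Poincar\'e inequality of \cite{HK} is genuinely needed rather than the global embedding~\eqref{Sobolevembedding}.
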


\begin{proof}
Combining the Poincaré inequality~\eqref{Poincare} with the local doubling condition~\eqref{localdoubling} and with~\cite[Theorem 9.7 and Corollary 9.5]{HK}, we find $q>2$ and $C>0$ such that, for all $f\in C^\infty$,
	\[
\bigg(\dashint_{B(x,r)} |f - \tilde f|^q \, \dd \mu \bigg)^{1/q} \leq C r \bigg( \dashint_{B(x,r)} |\nabla_\Hc f|^2\, \dd \mu \bigg)^{1/2},
	\]
	where $\tilde f\coloneqq\dashint_{B(x,r)}f\,\dd \mi $. Hence, the conclusion follows by~\cite[Theorem 8.1 and 9.7]{HK}, again by the Poincaré inequality~\eqref{Poincare}.
\end{proof}

\begin{lemma}\label{lem:1}
Suppose $r>0$ and define
\begin{equation}\label{condC}
C(r) = \frac{4 \, \mi(B(e,r))}{\mi(B(e,r/2))} +1.
\end{equation}
Then, there is a decreasing function $\omega_{r} \colon (0,1) \to (0,\infty]$ such that, for all $x\in G$, $t\in (0,1)$ and $f\in C^\infty$,
\begin{equation}\label{eqlemmaKS}
 \int_{B(x,r)} \abs{f}^2\,\dd \mi \leq C(r) \int_{B(x,t r)} \abs{f}^2\,\dd \mi + \omega_r(t) \int_{B(x,r)} \abs{\nabla_\Hc f}^2\,\dd \mi,
\end{equation}
and such that $\lim\limits_{t\to 1^-} \omega_{r}(t)=0$.
\end{lemma}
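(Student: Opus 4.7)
The plan is to reduce to $x=e$ by the left invariance of $d$ and $\mu$, and then control the difference between $\int_B |f|^2\, d\mu$ and $\int_{B_t} |f|^2\, d\mu$ using the horizontal gradient, where $B:=B(e,r)$ and $B_t:=B(e,tr)$. Set $N(t):=\mu(B)/\mu(B_t)$ and $M:=\mu(B)/\mu(B(e,r/2))$, so that $C(r)=1+4M$ and $N(t)\le M$ precisely when $t\ge 1/2$.

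For a preliminary, non-sharp version I would use $|f|^2\le (1+\epsilon)|\bar f|^2+(1+1/\epsilon)|f-\bar f|^2$ with $\bar f:=\dashint_{B_t} f\, d\mu$, combined with $|\bar f|^2\mu(B)\le N(t)\int_{B_t}|f|^2\, d\mu$ (Cauchy--Schwarz) and the Poincaré inequality~\eqref{Poincare} for $p=2$, comparing $\bar f$ with $\dashint_B f\, d\mu$ by a short chain argument. Choosing $\epsilon=\epsilon(t)$ so that $(1+\epsilon)N(t)=C(r)$, which is possible for $t\ge 1/2$ since $N(t)\le M<C(r)$, yields an inequality of the required shape, but with $\omega_r(t)$ only \emph{bounded} in $t$, not vanishing.

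To upgrade this to $\omega_r(t)\to 0$ I would prove an auxiliary annulus estimate
\[
\int_{B\setminus B_t} |f|^2\, d\mu \;\le\; \alpha(t)\int_B |f|^2\, d\mu \;+\; \beta(t)\int_B |\nabla_\Hc f|^2\, d\mu,
\]
with $\alpha(t),\beta(t)\to 0$ as $t\to 1^-$. The idea is that each $y\in B\setminus B_t$ admits some $z\in B_t$ with $d(y,z)\le(1-t)r$: writing $f(y)-f(z)$ as the line integral of $\nabla_\Hc f$ along a horizontal curve $\gamma$ of length at most $(1-t)r$ and applying Cauchy--Schwarz gives the pointwise bound $|f(y)|^2\le 2|f(z)|^2+2(1-t)r\int_\gamma|\nabla_\Hc f|^2\, ds$. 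Averaging $z$ uniformly over $B(y,(1-t)r)\cap B_t$, integrating over $y\in B\setminus B_t$, and applying Fubini together with the local doubling condition~\eqref{localdoubling} (to control $\mu(B(y,(1-t)r)\cap B_t)$ for $y$ near the inner sphere) yields $\alpha(t),\beta(t)=O(1-t)$. Adding $\int_{B_t}|f|^2\, d\mu$ to both sides and rearranging produces
\[
\int_B |f|^2\, d\mu \;\le\; \frac{1}{1-\alpha(t)}\int_{B_t}|f|^2\, d\mu \;+\; \frac{\beta(t)}{1-\alpha(t)}\int_B|\nabla_\Hc f|^2\, d\mu,
\]
so that $1/(1-\alpha(t))\to 1\le C(r)$ and $\omega_r(t):=\beta(t)/(1-\alpha(t))\to 0$ as $t\to 1^-$. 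For $t$ below the threshold where this works (e.g.\ $t<1/2$), one simply declares $\omega_r(t):=+\infty$, making the asserted inequality vacuous there; the function $\omega_r$ can then be taken decreasing.

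The main obstacle is the horizontal-curve averaging step: one must select, for each $y\in B\setminus B_t$, a family of $z\in B(y,(1-t)r)\cap B_t$ together with connecting horizontal curves $\gamma$ of controlled length (this uses the geodesic structure of the CC distance, see~\cite[Corollary~3.47]{AgrachevBoscainBarilari}), and perform a Fubini/change-of-variables manipulation turning the line integral $\int_\gamma|\nabla_\Hc f|^2\, ds$ into a volume integral $\int_B|\nabla_\Hc f|^2\, d\mu$ while retaining the small factor $(1-t)$. A possibly cleaner alternative would be a trace-type inequality on the level sets of $d(e,\cdot)$ combined with a coarea argument, which would allow the Euclidean polar-decomposition proof to transfer directly; either route is where the delicate sub-Riemannian geometry is used.
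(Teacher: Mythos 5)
Your core analytic step is exactly the Kondratiev--Shubin Riemannian strategy: connect each $y\in B\setminus B_t$ to nearby $z\in B_t$ by short horizontal curves, average, and turn the resulting line integrals of $|\nabla_\Hc f|^2$ into a volume integral carrying a factor $O(1-t)$. The paper explicitly flags this route as problematic in the remark immediately preceding the proof of Lemma~\ref{lem:1}: the Riemannian argument ``uses a good parametrization of the geodesics (namely, the exponential map, for small balls)'' and ``it is not clear to us whether it can be extended to a sub-Riemannian setting.'' The step you yourself label ``the main obstacle'' is where this bites. In a sub-Riemannian manifold the (cotangent) exponential map need not be a local diffeomorphism even away from the origin, abnormal minimizers may appear, and there is no polar-type decomposition of $\mu$ along geodesic rays with a controlled Jacobian; so the Fubini/change-of-variables manipulation that should produce $\beta(t)=O(1-t)$ has no proof. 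The ``coarea on CC spheres'' alternative is no easier, since it requires fine control of the surface measure of level sets of $d(e,\cdot)$, which are not smooth in general. You correctly identify the difficulty but do not resolve it, so the decisive estimate is missing.

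The paper's actual proof is soft and avoids geodesic geometry entirely. It \emph{defines} $\omega_r(t)$ as the supremum of $\int_{B(e,r)}|f|^2\,d\mu - C(r)\int_{B(e,tr)}|f|^2\,d\mu$ over $f$ with $\int_{B(e,r)}|\nabla_\Hc f|^2\,d\mu=1$ (so \eqref{eqlemmaKS} holds by construction, after the translation-invariance reduction you also use) and shows $\omega_r(t)\to 0$ by contradiction. If not, one extracts a sequence $(f_k)$ in $S_{e,r}$ with $\int_{B(e,r)}|f_k|^2\geq\varepsilon+C(r)\int_{B(e,t_kr)}|f_k|^2$, applies the Rellich--Kondrachov-type compactness of Lemma~\ref{prop:4} to $f_k-\dashint_{B(e,r)}f_k$, and distinguishes whether the means stay bounded or diverge; both cases yield a contradiction. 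Note that the precise value $C(r)=\frac{4\mu(B(e,r))}{\mu(B(e,r/2))}+1$ is used only in the divergent-means case (to guarantee $C'>\sqrt 2$), and there is no visible way to recover this constant from your annulus estimate, which is a further sign that the geometric route would need substantially more work than your sketch provides.
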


When $G$ is endowed with a \emph{Riemannian} structure, it is possible to prove that $\omega_{r}(t)=O((1-t)^2)$ for $t\to 1^-$, at least when $r$ is sufficiently small (cf.~\cite[Lemma 2.8]{KS}).  Since the proof uses a good parametrization of the geodesics (namely, the exponential map, for small balls), however, it is not clear to us whether it can be extended to a sub-Riemannian setting. We shall then adopt a different strategy from that of~\cite{KS}.

\begin{proof}
	For every $t\in (0,1)$,  define
	\[
	\omega_{x,r}(t)= \sup_{f \in S_{x,r}} \bigg( \int_{B(x,r)} \abs{f}^2\,\dd \mi-C(r) \int_{B(x,t r)} \abs{f}^2 \,\dd\mi \bigg),
	\]
	where $S_{x,r}$ was given in~\eqref{Sxr}.  Observe that $\omega_{x,r} = \omega_{e,r}$ for all $x\in G$ by left invariance, and that $\omega_{e,r}$ is decreasing on $(0,1)$.  We then define $\omega_r = \omega_{e,r}$, so that~\eqref{eqlemmaKS} holds.
	
	By taking a non-zero $f\in C^\infty_c(B(e,r) \setminus \overline B(e,t r) )$  with normalized $L^2$ norm of the gradient, one also sees that $\omega_{r}(t) > 0$ for every $t\in (0,1)$, so that $\omega_{r} \colon (0,1) \to (0,\infty]$. It remains only to prove that  $\omega_{r}(t) \to 0$ when $t\to 1^-$. For notational convenience, we shall just write $\omega$ and $C$ in place of $\omega_r$ and $C(r)$ in the remainder of the proof.
	
Assume by contradiction that $\omega(t)$ does not tend to $0$ when $t \to 1^-$, and fix a strictly increasing sequence $(t_k)$ converging to $1$ such that $t_0\geq 1/2$. Then, there are $\eps>0$ and a sequence $(f_k)$ in $S_{e,r}$ such that
	\begin{equation}\label{eq:1}
	\int_{B(e,r)} \abs{f_k}^2\,\dd \mi\geq \eps+C \int_{B(e, t_k r)} \abs{f_k}^2 \, \dd \mu
	\end{equation}
	for every $k\in\N$. 
	
	By Lemma~\ref{prop:4}, we may assume that the sequence $(f_k-\tilde f_k)$ converges to some $f$ in $L^2(B(e,r))$,  where
	\[
\tilde f_k\coloneqq \dashint_{B(e,r)} f_k\,\dd \mi.
	\]
Since $(f_k-\tilde f_k)$ converges to $f$ also in $L^1(B(e,r))$, one has $\int_{B(e,r)} f\,\dd \mi=0$. We now distinguish two cases, depending on the behaviour of $(\tilde f_k)$.
	
	Assume first that the sequence $(\tilde f_k)$ in $\C$ has some bounded subsequence. Up to passing to a subsequence, we may therefore assume that $(\tilde f_k)$ converges to some $c_0\in \C$, so that $(f_k)$ converges to $f+c_0$ in $L^2(B(e,r))$. Hence, taking the limit in~\eqref{eq:1},
	\[
	\int_{B(x,r)} \abs{f+c_0}^2\,\dd \mi\geq \eps+C \int_{B(x, r)} \abs{f+c_0}^2 ,
	\]
	which is a contradiction, since $\eps>0$ and $C\geq 1$.
	
Assume then that $(\tilde f_k)\to \infty$. By~\eqref{eq:1}, for all $k\in \N$
	\begin{equation}\label{eq:2}
	\begin{split}
	\bigg(\dashint_{B(e,r)} \abs{f_k-\tilde f_k}^2\,\dd \mi\bigg)^{1/2}&\geq \bigg(\dashint_{B(e,r)} \abs{f_k}^2\,\dd \mi\bigg)^{1/2}-\abs{\tilde f_k}\\
		&\geq \eps'+C' \bigg( \dashint_{B(e,t_k r)} \abs{f_k}^2\,\dd \mi\bigg) ^{1/2}-\abs{\tilde f_k},
	\end{split}
	\end{equation}
where  we set 
	\[
	\eps' = \bigg( \frac{ \eps}{ 2 \mi(B(e,r))}\bigg) ^{1/2}, \qquad C' =\bigg( \frac{ C \mi(B(e, r/2)) }{ 2 \mi(B(e,r))}\bigg) ^{1/2}.
	\]
Observe moreover that 
\begin{align*}
	\dashint_{B(e,t_k r)} \abs{f_k}^2\,\dd \mi&= \dashint_{B(e,t_k r)} \abs{f_k-\tilde f_k}^2\,\dd \mi+\abs{\tilde f_k}^2+ 2\,\Re\bigg(  \overline{\tilde f_k}\dashint_{B(e, t_k r)} (f-\tilde f_k)\,\dd \mi\bigg) \\
		&\geq \dashint_{B(e,t_k r)} \abs{f_k-\tilde f_k}^2\,\dd \mi+\abs{\tilde f_k}^2-2 \abs{\tilde f_k}\bigg|\dashint_{B(e, t_k r)} (f-\tilde f_k)\,\dd \mi \bigg|.
\end{align*}
This together with~\eqref{eq:2} implies that, for $k\in \N$,
\begin{equation}\label{secondcontrad}
\begin{split}
		&\bigg(\dashint_{B(e,r)} \abs{f_k-\tilde f_k}^2\,\dd \mi\bigg)^{1/2}\geq \eps'+C'  \bigg( \frac 1 2\dashint_{B(e,t_k r)} \abs{f_k-\tilde f_k}^2\,\dd \mi\bigg) ^{1/2} \\
			&\hspace{2.8cm}  + \abs{\tilde f_k}\bigg( \frac{C'}{\sqrt 2}-1\bigg)- \sqrt{2}\,C'\bigg( \abs{\tilde f_k}\bigg|\dashint_{B(e, t_k r)} (f-\tilde f_k)\,\dd \mi\bigg|\bigg)^{1/2}.
			\end{split}
\end{equation}
For $k\to \infty$, since
\[
\dashint_{B(e,t_k r)} \abs{f_k-\tilde f_k}^2\,\dd \mi \to \dashint_{B(e,r)}\abs{f}^2\,\dd \mi, \qquad \dashint_{B(e, t_k r)} (f-\tilde f_k)\,\dd \mi \to 0,
\]
and $C'>\sqrt 2$ by~\eqref{condC}, the right hand side in~\eqref{secondcontrad} goes to $+\infty$ while the left hand side remains bounded. This gives a contradiction and completes the proof.
\end{proof}

\begin{theorem}\label{teoKS}
Let $r>0$ be given. Then, the following conditions are equivalent:
	\begin{enumerate}
		\item[\textnormal{(1)}] $\Hc_V$ has purely discrete spectrum;
		
		\item[\textnormal{(2)}] $\lim\limits_{x\to \infty}\sigma_D(B(x,r))=+\infty$;
		
		\item[\textnormal{(3)}] $\lim\limits_{x\to \infty}\sigma_N(B(x,r))=+\infty$. 
	\end{enumerate}
\end{theorem}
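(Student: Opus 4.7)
Proposition~\ref{prop:3} already supplies $(1) \Rightarrow (2)$ (for all $r>0$) and $(3) \Rightarrow (1)$ (it suffices that (3) hold for one $r>0$, hence for the given one). It therefore remains to prove $(2) \Rightarrow (3)$ for the same fixed radius $r$, and the whole loop $(1) \Rightarrow (2) \Rightarrow (3) \Rightarrow (1)$ closes. My plan for the missing implication is a Leibniz/cutoff argument anchored by Lemma~\ref{lem:1}.

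\textbf{Core estimate.} Fix a parameter $t \in (0, 1)$ (to be tuned at the end) and, using that $\overline{B(e,tr)}$ is a compact subset of the open ball $B(e,r)$, pick once and for all a cutoff $\chi \in C_c^\infty(B(e, r))$ with $0 \le \chi \le 1$, $\chi \equiv 1$ on $\overline{B(e, tr)}$, and $M_t := \|\nabla_{\mathcal H} \chi\|_\infty < \infty$. Setting $\chi_x := L_x \chi$, left invariance of $\nabla_\mathcal{H}$ gives $\chi_x \in C_c^\infty(B(x,r))$, $\chi_x \equiv 1$ on $B(x,tr)$, and $\|\nabla_\mathcal{H}\chi_x\|_\infty = M_t$ uniformly in $x$. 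For any $f \in C^\infty(G)$ with $\int_{B(x,r)} |f|^2\,\dd\mu = 1$, set $A := \int_{B(x,r)}(|\nabla_\mathcal{H} f|^2 + V|f|^2)\,\dd\mu$. Since $V \ge 1$, one has $\int_{B(x,r)}|\nabla_\mathcal{H} f|^2 \dd\mu \le A$, so Lemma~\ref{lem:1} yields
\[
\int_{B(x,tr)} |f|^2\, \dd \mu \;\ge\; \frac{1 - \omega_r(t)\, A}{C(r)}.
\]
As $\chi_x \equiv 1$ on $B(x,tr)$, this is also a lower bound for $\|\chi_x f\|_2^2$. Plugging $\chi_x f \in C_c^\infty(B(x,r))$ into the Dirichlet infimum and using the pointwise bound $|\nabla_\mathcal{H}(\chi_x f)|^2 \le 2|\nabla_\mathcal{H} f|^2 + 2 M_t^2 |f|^2$ together with $|\chi_x|\le 1$, I obtain $\int_{B(x,r)}(|\nabla_\mathcal{H}(\chi_x f)|^2 + V|\chi_x f|^2)\,\dd\mu \le 2A + 2M_t^2$, whence
\[
\frac{\sigma_D(B(x,r))}{C(r)}\bigl(1 - \omega_r(t)\, A\bigr) \;\le\; 2A + 2 M_t^2.
\]

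\textbf{Balancing and main obstacle.} Splitting on whether $\omega_r(t) A \ge 1/2$ gives, in either case,
\[
A \;\ge\; \min\!\left(\frac{1}{2\omega_r(t)},\ \frac{\sigma_D(B(x,r))}{4 C(r)} - M_t^2 \right).
\]
To prove $\sigma_N(B(x,r)) \to \infty$ it now suffices, given any $N > 0$, first to choose $t$ close enough to $1$ so that $1/(2\omega_r(t)) \ge N$ (possible because $\omega_r(t)\to 0$ as $t\to 1^-$); this freezes $M_t$. Then for $x$ so far out that $\sigma_D(B(x,r)) \ge 4 C(r)(N + M_t^2)$ --- which is granted by assumption (2) --- both terms in the minimum are $\ge N$, so $A \ge N$. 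Taking the infimum over $f$ yields $\sigma_N(B(x,r)) \ge N$, hence (3). The genuinely delicate point is the tension inherent in the cutoff: pushing $t \to 1$ kills the $\omega_r(t)$ factor (making Lemma~\ref{lem:1} sharp) but blows up $M_t$. The argument works only because one is allowed to first fix $t$ (freezing $M_t$) and then exploit the unbounded growth of $\sigma_D(B(x,r))$ to dominate $M_t^2$; this two-scale choice is the crux of the implication.
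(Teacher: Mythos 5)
Your proof of the nontrivial implication $(2) \Rightarrow (3)$ is correct, and while it relies on the same two ingredients as the paper — Lemma~\ref{lem:1} to transfer $L^2$ mass from $B(x,r)$ down to $B(x,tr)$, and a cutoff between the two balls to convert a Neumann competitor into a Dirichlet one — it is organized differently and, I think, more cleanly. The paper fixes no target in advance: it picks a near-minimizer $f$ for $\sigma_N(B(x,r))$, lets the cutoff scale $t_x$ depend on $\sigma_N(B(x,r))$ through a decreasing homeomorphism $\omega$ majorizing $\omega_r$, and assembles everything into a pointwise functional inequality $\sigma_D(B(x,r)) \le 4C(r)\,h\bigl(\sigma_N(B(x,r))\bigr)$, where $h$ is an explicit strictly increasing function with $h(s)\to\infty$; inverting $h$ then converts $(2)$ into $(3)$. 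You instead reason for a fixed target $N$, pick $t$ close enough to $1$ that $1/(2\omega_r(t)) \ge N$ (this fixes the cutoff and its gradient bound $M_t$ once and for all), prove the lower bound $A \ge \min\bigl(1/(2\omega_r(t)),\, \sigma_D(B(x,r))/(4C(r)) - M_t^2\bigr)$ valid for every $f$ with $\int_{B(x,r)}|f|^2\,\dd\mu=1$, and finish by sending $x\to\infty$ to make the second term in the minimum exceed $N$. This avoids both the near-minimizer step (you bound \emph{all} competitors simultaneously) and the auxiliary constructions $\omega$, $\alpha$, $h$ and their monotonicity arguments. What the paper's route buys is a clean quantitative comparison between $\sigma_D$ and $\sigma_N$ that holds ball by ball, which could be of independent use; what your route buys is brevity and transparency, since the order of quantifiers (first $N$, then $t$, then $x$) makes the competing tensions $\omega_r(t)\to 0$ versus $M_t\to\infty$ resolve without ever needing to track how $M_t$ grows. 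You have also correctly flagged this two-scale choice as the crux. One small remark: your inequality $\frac{\sigma_D(B(x,r))}{C(r)}(1-\omega_r(t)A) \le 2A + 2M_t^2$ is stated without caveat, and indeed it holds trivially when $1-\omega_r(t)A \le 0$ (the left side is then nonpositive), so the case split immediately afterward is airtight; it would be worth saying so explicitly, since the Dirichlet variational principle is only being invoked in the regime $\|\chi_x f\|_2 > 0$.
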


\begin{proof}
To begin with, we observe that the statements (1) $\implies$ (2) and (3) $\implies$ (1) follow from Proposition~\ref{prop:3}. Thus, we only need to prove that (2) implies (3).
	
Let $C=C(r)$ be as in~\eqref{condC} and $\omega_{r}$ as in Lemma~\ref{lem:1}. Fix $t_0\in (0,1)$ such that $\omega_{{r}}(t_0)<\infty$, so that $\omega_{{r}}(t) \in (0, \omega_{{r}}(t_0)]$ for all $t\in [t_0,1)$. Choose a  decreasing homeomorphism $\omega\colon [t_0,1]\to [0,\infty]$ such that $\omega_{r}\leq \omega$ on $[t_0,1)$.

For $t>0$,  pick a positive $\eta_t\in C^\infty_c$ such that 
\[
\int_G \eta_t \delta_G^{-1}\,\dd \mi=1, \qquad \supp{\eta_t}\subseteq B(e,t).
\]
We then select a strictly increasing sequence $(t_k)$,  $k\geq 1$, such that $t_1>t_0$ and $t_k \to 1$. For all $k\in \N$ and $t\in (t_k, t_{k+1}]$, we define 
	\[
	\tau_{t} =\mathbf{1}_{B(x, \frac{r}{2}(1+t_{k+1}))}* \eta_{\frac{r}{2}(1-t_{k+1})}.
	\]
Then  for $t\in (t_0,1)$
	\[
	\tau_t\in C^\infty_c(B(x,r)) , \qquad \mathbf{1}_{B(x, t r)}\leq \tau_t\leq \mathbf{1}_{B(x,r)}.
	\]
	In addition, by defining
\[
\alpha \colon (t_0,1) \to (0,\infty), \qquad \alpha(t) = \sup \{ \| \nabla_\Hc \tau_s \|_\infty \colon s\in (t_0,t]\} + \frac{1}{1-t},
\]
one gets 
\[
|\nabla_\Hc \tau_t| \leq \alpha(t) \qquad \forall t\in (t_0,1).
\]
Notice that  $\alpha(t)$ is  strictly increasing because of the extra term $1/(1-t)$.
 
By  definition of $\sigma_N(B(x,r))$, there is $f\in C^\infty$ such that
\begin{equation}\label{propf}
\int_{B(x,r)} \abs{f}^2\,\dd \mi=1, \qquad \text{and}\qquad	Q(f)\leq \sigma_N (B(x,r))+1.
\end{equation} 
	Then, for every $t\in (t_0,1)$,
	\begin{align*}
|\nabla_\Hc(f\tau_{t})|^2 + V|f\tau_{t}|^2 
& \leq |\nabla_\Hc f|^2 + V|f|^2 + \abs{f }^2\abs{\nabla_\Hc \tau_{t}}^2+ 2 \abs{\tau_{t} f }\abs{\nabla_\Hc f }\abs{\nabla_\Hc\tau_{t}})\\
& \leq  2 (|\nabla_\Hc f|^2 + V|f|^2 + |f|^2 |\nabla_\Hc \tau_{t}|^2).
	\end{align*}
	Therefore, by integrating over $B(x,r)$ and using~\eqref{propf},
\begin{equation}\label{fromDtoN}
	Q(f \tau_{t})\leq 2 Q(f ) +2\alpha^2(t) \leq  2\sigma_N(B(x,r))+2+2\alpha^2(t).
\end{equation}
	Now, define 
	\[
	t_x = \omega^{-1}\bigg(\frac{1}{2 \sigma_N(B(x,r))+2}  \bigg),
	\]
	and observe that,  by the choice of $\omega$ and of $f$ as in~\eqref{propf},
	\[
	\omega_r(t_x)\int_{B(x,r)} \abs{\nabla_\Hc f}^2\,\dd \mi \leq \omega(t_x)Q(f ) \leq \omega(t_x) (\sigma_N(B(x,r))+1) =\frac 1 2.
	\]
	Hence, by the definition of $\omega_r$, see Lemma~\ref{lem:1},
	\[
	\begin{split}
	1=\int_{B(x,r)}\abs{f }^2\,\dd \mi
		&\leq C \int_{B(x,t_x r)}\abs{f }^2\,\dd \mi+\omega_{r}(t_x) \int_{B(x,r)} \abs{\nabla_\Hc f }^2\,\dd \mi\\
		&\leq C \int_{B(x,t_x r)}\abs{f }^2\,\dd \mi+\frac 1 2.
	\end{split}
	\]
From this last inequality and the fact that $\tau_{t_x}=1$ on $B(x,t_x r)$, we deduce that
\begin{equation}\label{normalizingf}
	\int_{B(x,r)} \abs{f \tau_{t_x}}^2\,\dd \mi\geq \int_{B(x,t_x r)}\abs{f }^2\,\dd \mi\geq \frac{1}{2 C}.
\end{equation}
	Therefore, from~\eqref{normalizingf} and~\eqref{fromDtoN}, we get
\begin{align*}
\sigma_D(B(x,r))
&\leq 2C Q(f\tau_{t_x})\\
& \leq 4 C\bigg[\sigma_N(B(x,r))+1+\alpha^2\bigg( \omega^{-1} \bigg(\frac{1}{2 \sigma_N(B(x,r))+2} \bigg)\bigg)\bigg]
\end{align*}
	for all $x\in G$. Observe now that the function $h\colon \R_+\to \R_+$ defined by
	\[
	h(s) = s+1+ \alpha^2 \bigg(\omega^{-1}\bigg( \frac{1}{2s+2}\bigg)\bigg)
	\]
	is well defined, continuous, and strictly increasing, since $\alpha$ is strictly increasing while both $\omega^{-1}$ and $s\mapsto 1/(2s+2)$ are strictly decreasing. In addition, since $h(s) \to\infty$ as $s\to \infty$, also $h^{-1}(s)\to \infty$ when $s\to \infty$. Hence,
	\[
	\liminf_{x\to \infty} \sigma_N(B(x,r))\geq \lim_{x\to \infty} h^{-1}\bigg(  \frac{\sigma_D(B(x,r))}{4 C}\bigg)=+\infty,
	\]
	whence (3).
\end{proof}

\subsection{Thin potentials}

The following sufficient condition is inspired by~\cite[Theorem 3.1]{MetafunePallara}. As an application, see Corollary~\ref{cormagrpol} below, we obtain a generalization of a theorem of Simon in the Euclidean setting concerning ``polynomially thin'' potentials~\cite[Theorem 2]{Simon}; see also~\cite[Theorem 4.2]{BC1} on stratified Lie groups.

\begin{proposition}\label{prop:2}
If there is $r>0$ such that, for every $M>0$,
	\[
	\lim_{x\to \infty} \mu\big( \{ y\in G\colon V(y) \leq M\} \cap B(x,r) \big)=0,
	\]
then $\Hs_V$ has purely discrete spectrum.
\end{proposition}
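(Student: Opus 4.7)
The plan is to verify condition (3) of Theorem~\ref{teoKS} with the same $r>0$ as in the hypothesis, which by that theorem will yield purely discrete spectrum of $\Hs_V$. Fix $M>0$ and pick any $f\in C^\infty(G)$ with $\norm{\mathbf{1}_{B(x,r)}f}_2 = 1$, and set
\[
E_{M,x} = \{ y\in B(x,r) \colon V(y) \leq M\}.
\]
On $B(x,r) \setminus E_{M,x}$ we have $V>M$, so
\[
\int_{B(x,r) \setminus E_{M,x}} \abs{f}^2 \, \dd \mu \leq \frac{1}{M} \int_{B(x,r)} V \abs{f}^2 \, \dd \mu.
\]
The whole game is then to control $\int_{E_{M,x}} \abs{f}^2 \, \dd \mu$, exploiting the fact that $\mu(E_{M,x}) \to 0$ as $x\to\infty$ (by hypothesis) together with the gradient energy on $B(x,r)$.

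For this I would decompose $f = (f-\tilde f) + \tilde f$ with $\tilde f = \dashint_{B(x,r)} f \, \dd \mu$. The constant part contributes $\abs{\tilde f}^2 \mu(E_{M,x}) \leq \mu(B(e,r))^{-1}\mu(E_{M,x})$ by Cauchy--Schwarz and the normalization of $f$. For the oscillatory part, the Poincaré--Sobolev inequality recalled in the proof of Lemma~\ref{prop:4} furnishes an exponent $q>2$ and a constant $C_r>0$, independent of $x\in G$ by left invariance of $d$, $\mu$ and $\nabla_\Hc$, such that
\[
\norm{f-\tilde f}_{L^q(B(x,r))} \leq C_r \norm{\nabla_\Hc f}_{L^2(B(x,r))}.
\]
Hölder on $E_{M,x}$ then yields
\[
\norm{f-\tilde f}_{L^2(E_{M,x})}^2 \leq C_r^2 \mu(E_{M,x})^{1-2/q} \norm{\nabla_\Hc f}_{L^2(B(x,r))}^2.
\]
Putting all the pieces together produces constants $A,B>0$ depending only on $r$, such that
\[
1 \leq A\, \mu(E_{M,x})^{1-2/q} \int_{B(x,r)} \abs{\nabla_\Hc f}^2 \, \dd \mu + B\, \mu(E_{M,x}) + \frac{1}{M} \int_{B(x,r)} V \abs{f}^2\, \dd \mu.
\]

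For any $M>0$, the hypothesis allows me to take $x$ far enough that $\mu(E_{M,x})$ is as small as needed. Choosing $x$ so that $B\,\mu(E_{M,x}) \leq 1/2$ and $A\,\mu(E_{M,x})^{1-2/q} \leq 1/M$ simultaneously, a straightforward rearrangement gives
\[
\int_{B(x,r)} \big(\abs{\nabla_\Hc f}^2 + V \abs{f}^2\big) \, \dd \mu \geq \tfrac{M}{2},
\]
uniformly in such $f$, so $\sigma_N(B(x,r)) \geq M/2$ eventually. Since $M$ was arbitrary, $\sigma_N(B(x,r)) \to \infty$ as $x\to\infty$, and Theorem~\ref{teoKS} concludes the proof. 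I expect the only minor point of care to be the uniformity in $x$ of the Poincaré--Sobolev constant $C_r$, which is however immediate from the left invariance of the whole structure; everything else is routine bookkeeping of the two error terms.
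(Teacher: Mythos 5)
Your proof is correct, and the route is genuinely different from the paper's, though both arrive at essentially the same kind of local estimate with the small power $\mu(E_{M,x})^{1-2/q}$ multiplying the gradient energy. Concretely: to localize the Sobolev improvement, the paper multiplies $f$ by a smooth cutoff $\psi_x$ equal to $1$ on $B(x,r)$ and supported in $B(x,2r)$, then invokes the Sobolev embedding $L^2_1\hookrightarrow L^p$ on $G$; this is why the doubled ball $B(x,2r)$ appears and why, after the Hölder step, the paper has to use a covering lemma (Anker's) to sum the local bounds and verify Proposition~\ref{prop1} directly. You instead split $f$ into its mean $\tilde f$ and oscillation on the single ball $B(x,r)$, apply the $(q,2)$--Poincaré--Sobolev inequality from the proof of Lemma~\ref{prop:4} (correctly noting the uniformity in $x$ by left invariance), and absorb the mean via Cauchy--Schwarz and the normalization $\|\mathbf{1}_{B(x,r)}f\|_2=1$. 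Having kept everything on a single ball and for an arbitrary $f\in C^\infty(G)$, you can close the argument immediately by reading off a lower bound for $\sigma_N(B(x,r))$ and appealing to Theorem~\ref{teoKS} (or just Proposition~\ref{prop:3}); in effect, the covering argument the paper performs explicitly is delegated to the proof of that criterion. Your version avoids the cutoff function and the passage to $2r$, at the cost of relying on the Neumann-eigenvalue machinery; the paper's version is more self-contained relative to Proposition~\ref{prop1}. Both are valid, and the minor care you flag (uniformity of the Poincaré--Sobolev constant in $x$) is indeed handled exactly as you say.
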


\begin{proof}
	Given $M>0$, set $\Omega_M= \{ y\in G\colon V(y)\leq M\}$.	For every $f\in \Dom(Q)$ and $x\in G$,
\begin{equation}\label{eq1}
	\int_{B(x,r)\cap (G\setminus \Omega_M)} \abs{f}^2\leq \frac 1 M \int_{B(x,r)} V \abs{f}^2\,\dd \mu.
\end{equation}
Let $\psi \in C_c^\infty$ be such that $\mathbf{1}_{B(e,r)}\leq \psi\leq \mathbf{1}_{B(e,2r)}$, and for $x\in G$ consider its translate $\psi_x = L_x \psi$. By the left invariance of the measure, the Sobolev embeddings~\eqref{Sobolevembedding} and~\cite[Proposition 3.3]{BPTV}, there are $p>2$ and two constants $C_1,C_1'>0$ such that
\begin{align*}
\bigg( \int_{B(x,r)} \abs{f}^p\,\dd \mu\bigg)^{1/p}  \leq \|\psi_x f\|_p & \leq C_1' \|\psi_xf\|_{L^2_1}\\
&\leq C_1\bigg( \int_{B(x,2r)} (\abs{f}^2+\abs{\nabla_\Hc (f \psi_x)}^2) \,\dd \mu \bigg)^{1/2}.
\end{align*}
Since
\[
|\nabla_\Hc (f \psi_x)| \leq |\nabla_\Hc f|  + |\nabla_\Hc \psi_x| |f| \leq C_2 ( |\nabla_\Hc f|  +   |f|),
\]
one gets that there is $C>0$ such that
	\[
	\bigg( \int_{B(x,r)} \abs{f}^p\,\dd \mu\bigg)^{1/p}\leq C \bigg( \int_{B(x,2r)} (\abs{ f}^2+\abs{\nabla_\Hc f}^2) \,\dd \mu \bigg)^{1/2}
	\]
	for all $x\in G$ and $f\in \Dom(Q)$. Hence, by H\"older's inequality, 
\begin{equation}\label{eq2}
	\int_{B(x,r)\cap \Omega_M} \abs{f}^2\,\dd \mu\leq C^2 \mu(\Omega_M\cap B(x,r))^{1-2/p}  \int_{B(x,2r)} (\abs{f}^2+\abs{\nabla_\Hc f}^2) \,\dd \mu.
\end{equation}
Fix now $\eps>0$. By assumption, there exists $R>0$ such that 
\[
C^2 \mu(\Omega_M\cap B(x,r))^{1-2/p}\leq\epsilon
\]
if $d(x,e)>R$;  take $M\geq  \eps^{-1}$. Then, by~\eqref{eq1} and~\eqref{eq2}, 
\begin{equation}\label{eq3}
	\int_{B(x,r)}\abs{f}^2\,\dd \mu\leq \eps \int_{B(x,2r)} ( \abs{\nabla_\Hc f}^2+ (1+V)\abs{f}^2  )\,\dd \mu, \qquad d(x,e)>R. 
\end{equation}
We use again a covering lemma, see~\cite[Lemma 2.3]{B1} and~\cite[Lemma 1]{Anker}, to get a countable subset $\mathfrak{U}$ of $G$ such that $G=\bigcup_{x\in \mathfrak{U}} B(x,r)$ and such that for all $x_0\in \mathfrak{U}$, $B(x_0,2r) \cap B(x,2r)$  is non-empty for at most $n$ elements $x \in \mathfrak{U}$. Since $B(x,r) \cap (G\setminus B(e,r+R))\neq \emptyset $ implies $x\notin B(e,R)$, by~\eqref{eq3} we get
\begin{align*}
	\int_{G\setminus B(e,r+R)} \abs{f}^2\,\dd \mu 
	& \leq   \sum_{\substack{ x\in \mathfrak{U}  \\ B(x,r) \cap (G\setminus B(e,r+R))\neq \emptyset  }} \int_{B(x,r)}\abs{f}^2\,\dd \mu \\
	& \leq 	\epsilon  \sum_{x\in \mathfrak{U} } \int_{B(x,2r)} ( \abs{\nabla_\Hc f}^2+ (1+V)\abs{f}^2  )\,\dd \mu \\
	&\leq 2 \eps n Q(f).
\end{align*}
	The conclusion then follows from Proposition~\ref{prop1}.
\end{proof}

As a corollary, we obtain the aforementioned generalization of~\cite[Theorem 2]{Simon} and~\cite[Theorem 4.2]{BC1} for polynomially thin potentials. 

\begin{corollary}\label{cormagrpol}
Assume that for every $M>0$ there is $\ell>0$ such that $\Omega_{M}= \{y\in G \colon \, V(y)\leq M\}$ satisfies 
\[
\int_{\Omega_M} \mi(\Omega_M \cap B(x,r))^{\ell} \,\dd\mu(x) <\infty 
\]
for every $r>0$. Then $\Hs_V$ has purely discrete spectrum.
\end{corollary}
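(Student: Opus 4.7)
My plan is to apply Proposition~\ref{prop:2}. Fix $M>0$, and let $\ell>0$ be the exponent furnished by the hypothesis; I will verify that, for any choice of $r>0$, $\mu(\Omega_M\cap B(x,r))\to 0$ as $x\to\infty$, which is exactly the decay condition required by that proposition.

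The key step will be the pointwise inequality
\[
\mu(\Omega_M\cap B(x,r))^{\ell+1}\leq \int_{\Omega_M\cap B(x,r)}\mu(\Omega_M\cap B(z,2r))^{\ell}\,\dd \mu(z),\qquad x\in G,
\]
which I expect to obtain from the simple observation that for $z\in B(x,r)$ one has $B(x,r)\subseteq B(z,2r)$, and hence $\mu(\Omega_M\cap B(x,r))\leq \mu(\Omega_M\cap B(z,2r))$; raising this to the $\ell$-th power and integrating $z$ over $\Omega_M\cap B(x,r)$ should yield the claim.

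Next, I will apply the assumption with radius $2r$ to deduce that the function $g(z):=\mu(\Omega_M\cap B(z,2r))^\ell$ lies in $L^1(\Omega_M,\mu)$. By absolute continuity of the integral, for every $\eps>0$ there will exist $R>0$ such that $\int_{\Omega_M\setminus B(e,R)} g\,\dd\mu<\eps$. As soon as $d(x,e)>R+r$, the ball $B(x,r)$ will be disjoint from $B(e,R)$, and the displayed pointwise bound will force $\mu(\Omega_M\cap B(x,r))^{\ell+1}<\eps$; taking the $(\ell+1)$-th root and invoking Proposition~\ref{prop:2} then finishes the argument.

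The only potential pitfall I foresee is that the integrability hypothesis is phrased on $\Omega_M$, not on all of $G$, whereas Proposition~\ref{prop:2} demands a decay statement for $x$ ranging over the whole group. The pointwise bound above is designed precisely to bypass this mismatch, since the integration variable $z$ is confined to $\Omega_M\cap B(x,r)$; this is why the inflation of the radius from $r$ to $2r$ (and, correspondingly, the exponent $\ell+1$ on the left-hand side) is crucial.
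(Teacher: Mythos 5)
Your proof is correct. The key geometric observation you use — that for $z\in \Omega_M\cap B(x,r)$ one has $B(x,r)\subseteq B(z,2r)$, whence $\mu(\Omega_M\cap B(x,r))\leq\mu(\Omega_M\cap B(z,2r))$ — is exactly the one the paper uses, but you reach the conclusion by a different route. The paper argues by contradiction: it extracts a sequence $(x_n)\to\infty$ with $\mu(\Omega_M\cap B(x_n,s))\geq\eps$, passes (WLOG) to pairwise disjoint balls, and shows that summing the hypothesis integral over these balls diverges. You instead prove the pointwise bound
\[
\mu(\Omega_M\cap B(x,r))^{\ell+1}\leq \int_{\Omega_M\cap B(x,r)}\mu(\Omega_M\cap B(z,2r))^{\ell}\,\dd\mu(z)
\]
and then let the vanishing of the tail of the integrable function $z\mapsto\mu(\Omega_M\cap B(z,2r))^{\ell}$ finish the job directly, avoiding both the extraction of a disjoint subsequence and the argument by contradiction. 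Both proofs are valid; yours is arguably cleaner and more quantitative, while the paper's is a classical disjoint-covering contradiction. One minor phrasing note: what you invoke at the end is not really ``absolute continuity of the integral'' but simply the fact that $\int_{\Omega_M\setminus B(e,R)}g\,\dd\mu\to 0$ as $R\to\infty$ for $g\in L^1(\Omega_M,\mu)$ (monotone or dominated convergence applied to the increasing exhaustion $\Omega_M\cap B(e,R)$), together with the boundedness of balls in $(G,d)$; it would be better to say this explicitly.
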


\begin{proof}
By Proposition~\ref{prop:2}, it is enough to show that, if a measurable subset $\Omega$  of $G$ is such that, for some $\ell>0$,
\[
\int_{\Omega} \mu( \Omega \cap B(x,r))^{\ell} \,\dd\mu(x) <\infty 
\]
for all $r>0$, then there is $s>0$ such that $\mu(\Omega \cap B(x,s))\to 0$ when $x\to \infty$. We shall actually show that such condition holds \emph{for all} $s>0$.

Pick $s>0$ and assume, by contradiction, that there exist a sequence $(x_n)$ going to $ \infty$ in $G$, and $\epsilon>0$ such that $\mu(\Omega \cap B(x_n,s))\geq \eps$ for all $n\in \N$. Without loss of generality, we can assume that $d(x_n, x_m) \geq 2s$ for all $n\neq m$, so that the balls $B(x_n,s)$ are pairwise disjoint. It follows that, for all $n\in \N$ and $x\in  \Omega \cap B(x_n,s)$,
\begin{equation}\label{Omegageqeps}
\mu(\Omega \cap B(x,2s))\geq \mu(\Omega \cap B(x_n,s)) \geq \eps.
\end{equation}
Therefore
\begin{align*}
\int_{\Omega} \mu( \Omega \cap B(x,2s))^{\ell} \,\dd \mu(x) 
& \geq \sum_n \int_{\Omega \cap B(x_n, s)  } \mu( \Omega \cap B(x,2s))^{\ell} \,\dd\mu(x) \\
& \geq  \epsilon^\ell \sum_n \mu( \Omega \cap B(x_n, s) )  = \infty,
\end{align*}
where the last equality follows from~\eqref{Omegageqeps}. The proof is complete.
\end{proof}

\subsection{A prelude to polynomial potentials}
The last result of this section is a characterization of the discreteness of the spectrum of Schr\"odinger operators whose potential is of the form $V=f\circ p$, where $p\colon G\to \R^n$ is a function in a finite-dimensional left-invariant space of functions and $f\colon \R^n\to \R$ is a proper map. We recall that a space $W$ of functions on $G$ is said to be left-invariant if the left-translates of its elements still belong to $W$. This result will lay the ground for the next section, where $p$ will be a polynomial map.  The characterization is again provided in terms of the decay property at infinity of the measure of the  sublevel sets of $V$. We begin with some lemmas.

\begin{lemma}\label{lem:8bis}
	Let $W$ be a finite-dimensional left-invariant space of functions from $G$ to some $\R^n$. Then, every element of $W$ is real analytic.
\end{lemma}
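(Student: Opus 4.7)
The plan is to use left-invariance to package $W$ into a finite-dimensional matrix representation of $G$, and then invoke a standard regularity result for Lie group homomorphisms. First I would reduce to the scalar case $n=1$ (analyticity being a componentwise property) and fix a basis $f_1,\dots,f_k$ of $W$. Left-invariance then yields, for each $g\in G$, unique scalars $A_{ij}(g)$ such that $f_i(gx)=\sum_j A_{ij}(g) f_j(x)$ for every $x\in G$ and every $i$; associativity of the group law immediately forces $A\colon G\to GL_k(\R)$ to be a group homomorphism.

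To pass from this abstract identity to an explicit formula, I would exploit the linear independence of $f_1,\dots,f_k$ to select $x_1,\dots,x_k\in G$ such that the matrix $F=(f_j(x_l))_{j,l}$ is invertible. Evaluating the defining identity at $x=x_l$ and inverting the resulting linear system gives $A_{ij}(g)=\sum_l (F^{-1})_{jl}\, f_i(g x_l)$, while evaluating instead at $x=e$ gives $f_i(g)=\sum_j A_{ij}(g) f_j(e)$. Thus every element of $W$ is a linear combination of the matrix entries of $A$, and the real analyticity of every element of $W$ is reduced to the real analyticity of the homomorphism $A\colon G\to GL_k(\R)$.

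The decisive ingredient is then the classical theorem that every Borel-measurable group homomorphism between Lie groups is in fact real analytic. From the formula above, the entries $A_{ij}$ inherit the measurability of the left translates $g\mapsto f_i(g x_l)$; the elements of $W$ being (implicitly) at least measurable in the setting of the paper, this theorem applies and $A$ is analytic. Substituting back into $f_i(g)=\sum_j A_{ij}(g) f_j(e)$ yields real analyticity of each $f_i$, and hence of every element of $W$.

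The main obstacle, in my view, is precisely this last step: the passage from (Borel) measurability to analyticity is a nontrivial classical fact whose application has to be either cited carefully or, in a more self-contained presentation, replaced by the weaker appeal to Cartan's theorem on continuous homomorphisms between Lie groups. In the latter variant one would need to know a priori that the $f_i$ are continuous, after which the explicit formula for $A$ makes $A$ continuous, Cartan's theorem makes it analytic, and the substitution $f_i(g)=\sum_j A_{ij}(g) f_j(e)$ closes the bootstrap from continuity to analyticity.
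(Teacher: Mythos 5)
Your argument is essentially the same as the paper's, just written in coordinates: the paper packages left translation as an abstract map $L\colon G\to GL(W)$, observes it is a continuous group homomorphism, and invokes the Bourbaki/Cartan theorem on analyticity of continuous homomorphisms of Lie groups; you instead work with the matrix $A(g)$ of $L_{g^{-1}}$ in a basis, derive the explicit formula $A_{ij}(g)=\sum_l (F^{-1})_{lj}\,f_i(g x_l)$ (your indices on $F^{-1}$ are transposed, a harmless slip), and then appeal to the same theorem before substituting back into $f_i(g)=\sum_j A_{ij}(g)f_j(e)$. So the structure is identical; the paper's one-sentence continuity claim is exactly your explicit coordinate computation.

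What you added, and what is genuinely valuable, is the observation that \emph{some} a priori regularity of the elements of $W$ is needed: the paper asserts ``then $L$ is continuous'' without justification, but by your formula the continuity of $L$ (equivalently of the entries $A_{ij}$) is literally equivalent to continuity of the basis functions $f_i$. Without such a hypothesis the lemma fails: on $G=\R$, take $W=\spann\{1,f\}$ where $f$ is a discontinuous additive function; then $W$ is a two-dimensional left-invariant space whose elements are not even measurable, let alone analytic. In all the paper's applications the elements of $W$ are at least continuous (polynomials are $C^m$), so nothing breaks downstream, but the hypothesis should really be recorded in the statement. Your proposal, citing either the Borel-measurable homomorphism theorem or Cartan's theorem on continuous homomorphisms, is a correct completion of the argument once that hypothesis is in place.
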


\begin{proof}
Observe that the map $L\colon G \to  GL(W)$ given by $x \mapsto L_x $ is a group homomorphism. In addition, if we endow $W$ with the unique Hausdorff topology which is compatible with its vector space structure and $GL(W)$ with the topology of pointwise convergence, then $L$ is continuous. Therefore, $L$ is real analytic by~\cite[Theorem 1, Ch.\ 3, \S 8, No.\ 1]{BourbakiLie2}. Hence, for every $p\in W$ the map $x \mapsto  L_{x^{-1}} p(e)=p(x)$ is real analytic.
\end{proof}

\begin{lemma}\label{lem:6}
Let $W$ be a finite-dimensional left-invariant space of functions from $G$ to some $\R^n$. Let $\delta,r>0$ be given. 
	Then, there is  $C>0$ such that, for every $M>0$, $x\in G$, and  $p\in W$, if 
	\[	
	\mu(\{y\in G \colon |p(y)|\leq M\}\cap B(x,r) )\geq \delta
	\]
then $\norm{\mathbf{1}_{B(x,r)} p }_\infty \leq C M$. 
\end{lemma}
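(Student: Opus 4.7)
The plan is to reduce to $x = e$ and $M = 1$ and then argue by compactness in the finite-dimensional space $W$. First, I would exploit the left invariance of both $W$ and $\mu$: for $p \in W$ and $x \in G$, the function $\tilde p(y) := p(xy)$ again belongs to $W$, and the change of variable $y \mapsto xy$ yields
\[
\mu(\{|\tilde p|\leq M\}\cap B(e,r)) = \mu(\{|p|\leq M\}\cap B(x,r))
\]
together with $\norm{\mathbf{1}_{B(e,r)}\tilde p}_\infty = \norm{\mathbf{1}_{B(x,r)} p}_\infty$, so that the statement at $x$ for $p$ is equivalent to the same statement at $e$ for $\tilde p$. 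Replacing $p$ by $p/M$ then reduces to $M = 1$. It thus suffices to find $C>0$ such that $N(p):=\norm{\mathbf{1}_{B(e,r)}p}_\infty \leq C$ whenever $p \in A := \{q \in W : \mu(\{|q|\leq 1\}\cap B(e,r))\geq \delta\}$.

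The key observation is that $N$ is in fact a norm on $W$. It is plainly a seminorm, and if $N(p) = 0$ then $p$ vanishes on the nonempty open set $B(e,r)$; since $G$ is connected and $p$ is real analytic by Lemma~\ref{lem:8bis}, this forces $p \equiv 0$. As $W$ is finite dimensional, $N$ induces the unique Hausdorff vector space topology on $W$, so the $N$-unit sphere $\{N = 1\}$ is compact.

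To bound $A$ I would then argue by contradiction: if there existed $p_n \in A$ with $N(p_n) \to \infty$, the normalizations $q_n := p_n/N(p_n)$ would lie on the compact sphere $\{N=1\}$, so a subsequence would converge in $N$ (hence uniformly on $B(e,r)$) to some $q$ with $N(q)=1$, and in particular $q \not\equiv 0$. Writing $\{|p_n|\leq 1\} = \{|q_n|\leq 1/N(p_n)\}$ and using that $1/N(p_n)\to 0$ together with the uniform convergence $q_n \to q$ on $B(e,r)$, one obtains, for each $\eta > 0$ and all sufficiently large $n$,
\[
\{|p_n|\leq 1\}\cap B(e,r) \subseteq \{|q|\leq 2\eta\}\cap B(e,r),
\]
hence $\delta \leq \mu(\{|q|\leq 2\eta\}\cap B(e,r))$. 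Letting $\eta \to 0^+$ and invoking continuity of measure from above would give $\delta \leq \mu(\{q = 0\}\cap B(e,r))$, contradicting the fact that the zero set of a nonzero real analytic function on the connected analytic manifold $G$ has $\mu$-measure zero.

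The main obstacle is precisely this last step: transferring the nontriviality $q \not\equiv 0$ into smallness of the sublevel sets $\{|q|\leq \eta\}$. This is where Lemma~\ref{lem:8bis} is essential, since it guarantees real analyticity of the elements of $W$ and thus the vanishing of the measure of the zero locus of any nonzero element.
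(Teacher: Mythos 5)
Your proof is correct and follows essentially the same route as the paper: reduce to $x=e$ and $M=1$ by left-invariance, observe that $N(p)=\|\mathbf{1}_{B(e,r)}p\|_\infty$ is a norm on $W$ (using Lemma~\ref{lem:8bis} for definiteness), normalize a putative unbounded sequence, use compactness of the unit sphere in finite dimensions to extract a limit $q$ with $N(q)=1$, and show $q$ vanishes on a set of measure at least $\delta$, contradicting analyticity. The only cosmetic difference is the device for producing that positive-measure zero set: the paper takes the $\limsup$ of the sets $F_k=\{|p_k|\le 1\}\cap B(e,r)$, while you nest the sublevel sets $\{|q|\le 2\eta\}\cap B(e,r)$ and use continuity of measure from above; both yield the same contradiction.
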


\begin{proof}
	We may reduce to proving the assertion for $x=e$ and $M=1$, up to replacing $p$ with $M p(x^{-1}\,\cdot\,)$. Then, assume by contradiction that there is a sequence $(p_k)$ of elements of $W$ such that 
	\[
	\mu(\{y\in G \colon |p_k(y)|\leq 1\}\cap B(e,r) )\geq \delta
	\]
	for every $k\in\N$, and 
\begin{equation}\label{eqinfinf}
	\lim\limits_{k\to \infty}\norm{\mathbf{1}_{B(e,r)} p_k }_\infty=\infty.
\end{equation}
	Observe that the map $p\mapsto\norm{\mathbf{1}_{B(e,r)} p }_\infty $ is a continuous semi-norm on $W$; it is a norm by Lemma~\ref{lem:8bis}.
	
	Since $W$ is finite dimensional, we may assume that the sequence $(p_k/\norm{\mathbf{1}_{B(e,r)}  p_k}_\infty)$ converges to some $p\in W$.  By continuity, $\norm{\mathbf{1}_{B(e,r)}  p}_\infty=1$. 
	
Now, set 
\[
F_k = \{ y\in G\colon |p_k(y)|\leq 1\}\cap B(e,r),\qquad  k\in\N,
\]
so that, by assumption, $\mu(F_k)\geq \delta$. If we define 
\[
F = \bigcap_{k\in \N} \bigcup_{h\geq k} F_h,
\]
then $F\subseteq B(e,r)$ and
\[
\mu(F)=\lim\limits_{k\to \infty} \mu\bigg(\bigcup_{h\geq k} F_h\bigg)\geq \delta.
\]
Because of~\eqref{eqinfinf} and since $|p_k(y)|\leq 1$ for all $y\in F_k$,  one gets 
\[
|p(y)| = \lim_{k\to \infty} \frac{|p_k(y)|}{\|\mathbf{1}_{B(e,r)}  p_k\|_\infty} =0 \qquad \forall\, y\in F,
\]
namely $p(F)= \{0\}$. Therefore, $p$ vanishes on a set of strictly positive measure;  since it is analytic by Lemma~\ref{lem:8bis}, we obtain $p=0$, while $\norm{\mathbf{1}_{B(e,r)} p}_\infty=1$: this is a contradiction.
\end{proof}

\begin{proposition}\label{prop:1}
Let $W$ be a finite-dimensional left-invariant space of functions from $G$ to some $\R^n$. Assume that $V=f\circ p$ for some $p\in W$  and some proper map $f\colon \R^n\to \R$, and suppose $r>0$. Then $\Hs_V$ has purely discrete spectrum if and only if, for every $M>0$,
	\[
	\lim_{x\to \infty} \mu\big( \{ y\in G \colon V(y) \leq M\} \cap B(x,r) \big)=0.
	\] 
\end{proposition}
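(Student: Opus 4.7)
\medskip

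\textbf{Proof plan.} The ``if'' direction is immediate from Proposition~\ref{prop:2}, whose hypothesis is literally the displayed condition. So the real content is the ``only if'' direction, and I would prove it by contradiction, combining Corollary~\ref{corprop:3} (which forces the integral of $V$ on balls to diverge at infinity when the spectrum is discrete) with Lemma~\ref{lem:6} (which controls $p$ uniformly on a ball once it is small on a subset of definite measure).

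Concretely: assume $\Hs_V$ has purely discrete spectrum, fix $M>0$, and suppose by contradiction that there exist $\delta>0$ and a sequence $(x_k)$ in $G$ with $x_k\to\infty$ such that
\[
\mu\bigl(\{y\in G : V(y)\leq M\}\cap B(x_k,r)\bigr)\geq \delta \qquad \forall\, k\in\N.
\]
Since $f\colon\R^n\to\R$ is proper (hence continuous with compact sublevel sets), there exists $M'>0$ such that $\{t\in\R^n : f(t)\leq M\}\subseteq \{|t|\leq M'\}$. Consequently
\[
\mu\bigl(\{y\in G : |p(y)|\leq M'\}\cap B(x_k,r)\bigr)\geq \delta \qquad \forall\, k\in\N.
\]
I would then apply Lemma~\ref{lem:6} (with the constants $\delta$, $r$, and $M'$, and the left-invariant space $W$ to which $p$ belongs) to obtain a constant $C>0$, depending only on $\delta$, $r$ and $W$, such that
\[
\norm{\mathbf{1}_{B(x_k,r)}p}_\infty \leq CM' \qquad \forall\, k\in\N.
\]
Set $K=CM'$ and $A\coloneqq\sup\{f(t) : |t|\leq K\}<\infty$ by continuity of $f$ on the compact ball $\{|t|\leq K\}$. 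Then $V(y)=f(p(y))\leq A$ for $\mu$-a.e.\ $y\in B(x_k,r)$ and every $k\in\N$, so that
\[
\int_{B(x_k,r)}V\,\dd\mu \leq A\,\mu(B(e,r)) \qquad \forall\, k\in\N.
\]
This uniform bound contradicts Corollary~\ref{corprop:3}, which guarantees $\int_{B(x_k,r)} V\,\dd\mu\to\infty$ along any sequence $x_k\to\infty$. Therefore no such sequence can exist, and the claimed limit holds.

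The only genuine difficulty is recognizing that the uniform sup-norm bound on $p$ provided by Lemma~\ref{lem:6} is exactly what converts the hypothetical measure lower bound on the sublevel set of $V$ into a pointwise upper bound for $V$ on the whole ball $B(x_k,r)$; once this mechanism is in place, the contradiction with Corollary~\ref{corprop:3} is automatic. No further analytic input (Poincaré, Sobolev, covering) is needed beyond what has already been developed in Sections~\ref{Sec:Firstcar} and~\ref{Sec:necandsuff}.
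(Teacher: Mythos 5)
Your argument is correct and follows the paper's proof step for step until the very last move: sufficiency via Proposition~\ref{prop:2}, the contradiction hypothesis on a sequence $x_k\to\infty$ with $\mu(\{V\leq M\}\cap B(x_k,r))\geq\delta$, the passage from $V$ to $p$ via properness of $f$, and the sup-norm bound from Lemma~\ref{lem:6} are all identical. Where you part ways is in closing the argument. The paper works directly rather than by contradiction: once $V$ is uniformly bounded on $\bigcup_k B(x_k,r)$, it translates a fixed nonzero $u\in C_c^\infty(B(e,r))$ to $u(x_k^{-1}\,\cdot\,)$, obtaining a bounded sequence in the form domain with no $L^2$-convergent subsequence, and concludes via~\cite[Theorem 11.3.13]{Ol} that the spectrum is not purely discrete. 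You instead turn the pointwise bound $V\leq A$ a.e.\ on $B(x_k,r)$ into the uniform bound $\int_{B(x_k,r)}V\,\dd\mu\leq A\,\mu(B(e,r))$ and contradict Corollary~\ref{corprop:3}. Since that corollary is itself proved by translating a fixed test function to estimate $\sigma_D(B(x,r))$, your route effectively factors the paper's construction through a previously established result rather than repeating it. Both arguments are valid and of comparable length; yours is a touch cleaner in its reuse of machinery, while the paper's version is self-contained at this point and makes explicit the Weyl-type sequence that also underlies the equivalent reformulation in Remark~\ref{remequivprop:1}, used later in Section~\ref{Sec:pol} (though your version yields that remark just as readily once one unwinds the contrapositive).
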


\begin{proof}
	Sufficiency is a special case of Proposition~\ref{prop:2}. Conversely, assume that there are $M, \delta>0$, and a sequence $(x_k)$ of elements of $G$ such that $(x_k)\to \infty$ for $k\to \infty$ and 
	\[
	\mu(\{ y\in G \colon V(y) \leq M\} \cap B(x_k, r) )\geq \delta.
	\]
	Observe that we may assume that the balls $B(x_k,r)$ are pairwise disjoint; in addition, since $f$ is proper, there is a constant $M'>0$ such that 
	\[
	\{ y\in G \colon V(y) \leq M\} \subseteq \{ y\in G \colon |p(y)| \leq M'\}. 
	\]
By Lemma~\ref{lem:6} we get that $\norm{\mathbf{1}_{B(x_k,r)} p}_\infty\leq C M'$ for every $k\in \N$, so that $V$ is uniformly bounded on $\bigcup_{k\in \N} B(x_k,r)$. Now, if $u\in C^\infty_c(B(e,r))$ is non-zero, then the sequence $(u(x_k^{-1}\,\cdot\,))$ is bounded in the domain of $\Hs_V$, but has no convergent subsequence. Hence, $\Hs_V$ does not have purely discrete spectrum by~\cite[Theorem 11.3.13]{Ol}.
\end{proof}

As its proof shows, Proposition~\ref{prop:1} can be rephrased in an equivalent way which will be convenient to us in the following. For future reference, we state it as a remark.
\begin{remark}\label{remequivprop:1}
Under the assumption of Proposition~\ref{prop:1}, $\Hs_V$ does \emph{not} have purely discrete spectrum if and only if there is a sequence $(x_k)$ in $G$ such that $x_k\to \infty$ and such that $p$ is uniformly bounded on the sets $B(x_k,r)$, $k\in\N$.
\end{remark}

\section{Polynomial potentials}\label{Sec:pol}
In this section we consider the special case when the potential $V$ is a polynomial, and obtain a characterization of the discreteness of the spectrum of $\Hs_V$ in terms of the vanishing properties of the right-invariant derivatives of $V$. Though inspired by~\cite{MetafunePallara}, such result needs substantially different ideas and techniques.  In the final part of the section we discuss some  notable examples, namely harmonic oscillators on Heisenberg groups. 

 We begin by recalling two definitions of  polynomials on general connected Lie groups. Given a left-invariant vector $X\in \mathfrak{g}$, we denote by $X^R$ the unique right-invariant vector field such that $X^R_e = X_e$. We follow closely~\cite{AntonelliLeDonne}; see also~\cite{Leibman}. 

\begin{definition}\label{defpol}
Let $m,n\in\N$ be given. 
\begin{itemize}
\item[(i)] A function $p\colon G\to \R^n$ of class $C^{m}$ is said to be a $\gf$-polynomial of degree at most $m$ if $(X^R)^m p=0$ for every $X\in \gf$.
\item[(ii)] A function $p\colon G\to \R^n$ of class $C^{m+1}$ is said to be a Leibman polynomial of degree at most $m$  if  $X_1^R\cdots X_{m+1}^R p=0$ for every $X_1,\dots, X_{m+1}\in \gf$.
\end{itemize}
The space of $\gf$-polynomials of degree at most $m$ will be denoted by $\mathcal{P}_m$.
\end{definition}
Observe that even though~\cite{AntonelliLeDonne} considers left-invariant vector fields, its theory holds as well with the right-invariant ones by considering the opposite group of $G$ (i.e., by considering the group which switches the role of the first and second factor). Moreover,~\cite{AntonelliLeDonne} considers only scalar-valued polynomials, but its results can be applied in our setting componentwise. Our definition of Leibman polynomials is an equivalent formulation of that in~\cite{Leibman} given in~\cite[Proposition 5.1]{AntonelliLeDonne}.

We first prove a preliminary result about the structure of the space of $\gf$-polynomials and the properties of its elements. 

\begin{lemma}\label{lem:9}
For all $m\in\N$, $\mathcal{P}_m$ is a finite-dimensional space and is invariant  both under left and right translations.  Further, every element of $\mathcal{P}_m$ is real analytic.
\end{lemma}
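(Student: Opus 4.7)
My plan is to establish the three claims in turn. I would first prove the bi-invariance of $\mathcal{P}_m$. For right translation by $y\in G$, the commutation $X^R(p\circ R_y) = (X^R p)\circ R_y$ is a direct consequence of the right-invariance of $X^R$ (using $X^R_g = (R_g)_* X^R_e$ and $R_y\circ R_g = R_{gy}$). Iterating gives $(X^R)^m(p\circ R_y) = ((X^R)^m p)\circ R_y = 0$, so $p\circ R_y\in\mathcal{P}_m$. For left translation, the commutation is twisted by the adjoint action: from the identity $y\exp(tX) = \exp(t\,\mathrm{Ad}_y X)\,y$, one obtains
\[
X^R(p\circ L_y) = ((\mathrm{Ad}_y X)^R p)\circ L_y,
\]
and iterating $m$ times yields $(X^R)^m(p\circ L_y) = (((\mathrm{Ad}_y X)^R)^m p)\circ L_y = 0$, since $\mathrm{Ad}_y X\in\gf$.

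Next, I would prove real-analyticity by analysing $p$ in exponential coordinates of the first kind near $e$. Fix a basis $X_1,\dots,X_n$ of $\gf$. For each $Y\in\gf$ the function $\phi_Y(t) = p(\exp(tY))$ is $C^m$ on $\R$ and, by a direct induction, satisfies $\phi_Y^{(k)}(t) = ((Y^R)^k p)(\exp(tY))$ for every $k\leq m$, with $\phi_Y^{(m)}\equiv 0$; hence $\phi_Y$ is a polynomial in $t$ of degree at most $m-1$. Writing $Y = \sum_i t_i X_i$, the expansion
\[
(Y^R)^k \;=\; \sum_{i_1,\dots,i_k} t_{i_1}\cdots t_{i_k}\, X_{i_1}^R\cdots X_{i_k}^R
\]
shows that the Taylor coefficient $\frac{1}{k!}((Y^R)^k p)(e)$ is a homogeneous polynomial of degree $k$ in $(t_1,\dots,t_n)$. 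Evaluating the expansion of $\phi_Y$ at $t=1$ then gives
\[
p(\exp(Y)) = \sum_{k=0}^{m-1} \frac{1}{k!}\,((Y^R)^k p)(e)
\]
for $Y$ in a neighborhood of $0$, so $p\circ\exp$ agrees there with a polynomial of degree at most $m-1$ in $(t_1,\dots,t_n)$. Since $\exp$ is a local analytic diffeomorphism at $0$, $p$ is real-analytic near $e$; combined with the bi-invariance of $\mathcal{P}_m$, $p$ is real-analytic near every point of $G$.

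For finite-dimensionality, the linear map sending $p\in\mathcal{P}_m$ to the polynomial in $(t_1,\dots,t_n)$ given by the displayed formula takes values in the finite-dimensional space of polynomials on $\R^n$ of degree at most $m-1$. Injectivity is immediate: if the image vanishes, then $p\circ\exp\equiv 0$ in a neighborhood of $0$, so $p$ vanishes in a neighborhood of $e$, and by real-analyticity together with the connectedness of $G$, $p\equiv 0$.

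The main obstacle I expect is the passage from ``$p\circ\exp$ is polynomial along each ray through the origin'' to ``$p\circ\exp$ is polynomial as a function of several variables in a neighborhood of the origin''. The former is a direct consequence of $(Y^R)^m p = 0$, but the latter is strictly stronger and requires identifying the Taylor coefficients along the ray in direction $Y$ as polynomial expressions in the coordinates of $Y$; this is achieved by the explicit expansion of $(Y^R)^k$ in the fixed basis $X_1,\dots,X_n$ of $\gf$.
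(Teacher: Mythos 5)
Your proof is correct, and it takes a genuinely different route from the paper on two of the three claims. The translation invariance is handled essentially as in the paper: right invariance comes straight from the definition, and left invariance from the $\mathrm{Ad}$-twisted intertwining of $X^R$ with left translation. For finite-dimensionality and real-analyticity, however, the paper simply invokes Antonelli--Le Donne, Theorem~1.1, whereas you reprove both from scratch. Your argument is sound: the key observation, that $\phi_Y^{(k)}(0)=((Y^R)^k p)(e)$ and that expanding $(Y^R)^k=\sum_{i_1,\dots,i_k} t_{i_1}\cdots t_{i_k} X^R_{i_1}\cdots X^R_{i_k}$ multilinearly in a fixed basis of $\gf$ exhibits this Taylor coefficient as a homogeneous polynomial of degree $k$ in the coordinates of $Y$, is exactly what upgrades ``polynomial along every ray through the origin'' to ``polynomial as a function of several variables,'' and this yields the formula
\[
p(\exp Y)=\sum_{k=0}^{m-1}\frac{1}{k!}\big((Y^R)^k p\big)(e),
\]
which is a polynomial of degree at most $m-1$ in $Y$. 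Since $\exp$ is an analytic local diffeomorphism at $0$, this gives analyticity near $e$ (and then everywhere by translation invariance), and the linear map $p\mapsto p\circ\exp$ into the polynomials of degree $\leq m-1$ on $\gf\cong\R^n$ is injective by analyticity and connectedness, giving the dimension bound $\dim\mathcal{P}_m\leq\binom{n+m-1}{n}$. This is in effect a compact proof of the relevant part of the cited Antonelli--Le Donne theorem for $\gf$-polynomials; the trade-off is that your version is longer but self-contained, and it makes explicit an embedding of $\mathcal{P}_m$ that the paper leaves implicit in the citation.
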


\begin{proof}
	The fact that $\mathcal{P}_m$ is finite dimensional and that its elements are real analytic is proved in~\cite[Theorem 1.1]{AntonelliLeDonne}. In addition, it follows from the definition that $\mathcal{P}_m$ is right invariant. We are then left with proving that $\mathcal{P}_m$ is also left invariant. For $g\in G$, define $\mathrm{Ad}_g\colon \gf\to\gf$ as the differential of the inner automorphism $x\mapsto g x g^{-1}$ of $G$, and let $X\in \gf$. Then, since $\mathrm{Ad}_{g^{-1}}X$ is left invariant and $(X^R) L_g = L_g (\mathrm{Ad}_{g^{-1}}X)^R$, one gets
	\[
	(X^R)^m L_g p= L_g [(\mathrm{Ad}_{g^{-1}}X)^R]^m p=0,
	\]
	so that $L_g p\in \mathcal{P}_m$ by the arbitrariness of $X$.
\end{proof}

The following theorem is one of the main results of this section. We recall that an exponential group is a group whose exponential is a global diffeomorphism. Notable examples are simply connected nilpotent Lie groups.
\begin{theorem}\label{Thm:carpolinomio}
Assume that $G$ is an exponential group, and that $V=f\circ p$ for a $\gf$-polynomial $p\colon G \to \R^n$ and a proper map $f\colon \R^n\to \R$.  Then, $\Hs_V$ has purely discrete spectrum if and only if there is no non-zero $X\in \gf$ such that $X^R p=0$.
\end{theorem}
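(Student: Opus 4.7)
The plan hinges on Remark~\ref{remequivprop:1} together with the Taylor identity for $p\in\mathcal{P}_m$,
\[
p(\exp(tX)y)=\sum_{j=0}^{m-1}\frac{t^j}{j!}(X^R)^j p(y),\qquad X\in\gf,\ y\in G,\ t\in\R,
\]
which holds because $(X^R)^m p=0$ and $p$ is real analytic by Lemma~\ref{lem:9}.

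For the direction $(\Rightarrow)$, which is the easy one, I would suppose that some $X\in\gf\setminus\{0\}$ satisfies $X^R p=0$. The identity collapses to $p(\exp(tX)y)=p(y)$ for every $t,y$, so that $|p|$ is uniformly bounded on $B(\exp(tX),r)=\exp(tX)B(e,r)$ with a bound independent of $t$. Since $G$ is exponential, $\exp$ is a diffeomorphism, hence $\exp(kX)\to\infty$ in $G$; Remark~\ref{remequivprop:1} then yields that $\Hs_V$ does not have purely discrete spectrum.

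For the converse I would argue by contrapositive. Assuming non-discreteness, Remark~\ref{remequivprop:1} supplies $r>0$, $M>0$, and a sequence $x_k\to\infty$ with $|p|\le M$ on $B(x_k,r)$. By Lemma~\ref{lem:9}, $p_k:=L_{x_k^{-1}}p$ lies in $\mathcal{P}_m$, and left-invariance of $d$ and $\mu$ gives $\|p_k\|_{L^\infty(B(e,r))}\le M$. Since $\mathcal{P}_m$ is finite-dimensional (Lemma~\ref{lem:9}) and the analyticity of its elements makes the $L^\infty(B(e,r))$-seminorm a norm, a subsequence of $(p_k)$ converges in $\mathcal{P}_m$. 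Now I would exploit exponentiality to write $x_k=\exp(Y_k)$ with $Y_k\to\infty$ in $\gf$, and after extracting a further subsequence arrange $W_k:=Y_k/|Y_k|\to W$ with $|W|=1$. Applying the Taylor identity with $X=W_k$ and $t=t_k:=|Y_k|$ gives, for every $y\in B(e,r)$,
\[
p(x_k y)=\sum_{j=0}^{m-1}\frac{t_k^j}{j!}(W_k^R)^j p(y),
\]
with the left-hand side bounded by $M$ and with $(W_k^R)^j p(y)\to(W^R)^j p(y)$ because $W\mapsto(W^R)^j p(y)$ is polynomial in $W$ (degree $j$). A degree-by-degree asymptotic analysis of this bounded polynomial-in-$t_k$ expression — first dividing by $t_k^{m-1}$ and letting $k\to\infty$ to obtain $(W^R)^{m-1}p\equiv 0$ on $B(e,r)$ (hence on $G$ by analyticity), then propagating this vanishing downwards to $j=1$ — produces $W^R p\equiv 0$. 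Because $|W|=1\ne0$, this contradicts the standing assumption.

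The main obstacle is the iteration in that last step. The topmost conclusion $(W^R)^{m-1}p\equiv 0$ is immediate from dividing by $t_k^{m-1}$, but moving down to $j=m-2,\dots,1$ requires controlling the rate at which $(W_k^R)^j p(y)\to 0$ for the already-eliminated exponents against the growth $t_k^j$; the estimate $|(W_k^R)^j p(y)-(W^R)^j p(y)|=O(|W_k-W|)$ is not automatically strong enough relative to $t_k^j$. My plan would be to overcome this either by a diagonal extraction that balances $|W_k-W|$ against powers of $t_k$ at each step of the induction, or — which seems cleaner — by using the full uniformity in $y\in B(e,r)$ rather than pointwise information, combined with the polynomial-in-$W$ structure of the coefficients $(W_k^R)^j p(\cdot)$, so as to express the ``tail'' terms themselves as bounded polynomial quantities that contribute only at lower orders in $t_k$. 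This is where the exponentiality of $G$ enters most essentially, through the identification $x_k=\exp(Y_k)$ with $Y_k\to\infty$ in $\gf$.
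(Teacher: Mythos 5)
Your easy direction (existence of a nonzero $X$ with $X^Rp=0$ implies non-discreteness) is exactly the paper's argument, and your setup for the converse --- writing $x_k=\exp(t_kX_k)$ with $|X_k|=1$, $t_k\to\infty$, extracting $X_k\to X_\infty$, and aiming to show $X_\infty^Rp=0$ --- also matches. But the heart of the converse has a genuine gap, which you yourself flag without resolving.

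The gap: in the step-down from exponent $j$ to $j-1$, you must control $t_k^{j}\,(X_k^R)^{j}p(y)$ for the exponents $j$ that have already been ``eliminated''. Knowing only that $(X^R_\infty)^{j}p\equiv 0$ and $(X_k^R)^{j}p(y)\to(X_\infty^R)^{j}p(y)$ gives no rate, and there is no reason $\bigl|(X_k^R)^{j}p(y)\bigr|=o(t_k^{-j})$. Your first proposed fix, a diagonal extraction balancing $|X_k-X_\infty|$ against powers of $t_k$, fails: given arbitrary $a_k\to 0$ and $t_k\to\infty$ one cannot in general extract a subsequence with $a_{k_\ell}t_{k_\ell}^{j}\to 0$ (take $a_k=1/\sqrt{k}$, $t_k=k$, $j=1$). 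Your second proposed fix (``uniformity in $y$ plus the polynomial-in-$W$ structure'') is the right instinct but is not a plan.

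What the paper actually does avoids the iteration altogether. Shrink the ball: choose $r'<r$ and $\eps>0$ with $\exp(Y)B(e,r')\subseteq B(e,r)$ for all $|Y|<\eps$. Then, for $y\in B(e,r')$ and $|s-t|<\eps$, one has $\exp(sX)y=\exp(tX)\exp((s-t)X)y\in\exp(tX)B(e,r)$, so the hypothesis ``$p$ is bounded on $B(x_k,r)$'' bounds the \emph{entire degree-$(m-1)$ polynomial} $s\mapsto p(\exp(sX_k)y)$ on the interval $(t_k-\eps,t_k+\eps)$, not merely at $s=t_k$. Since derivatives are continuous operators on the finite-dimensional space of such polynomials, this bounds $\bigl|(X_k^R)^{j}p(\exp(t_kX_k)y)\bigr|$ for every $j$, uniformly in $k$ and $y\in B(e,r')$. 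Combined with the Taylor identity written for each $(X_k^R)^jp$ and a triangular (unipotent) change of basis in the coefficients, one concludes directly that \emph{each} $t_k^{j}(X_k^R)^{j}p$ is uniformly bounded on $B(e,r')$. In particular $t_kX_k^Rp$ is bounded, so $X_k^Rp\to 0$ uniformly and $X_\infty^Rp=0$ on $B(e,r')$, hence everywhere by analyticity. To repair your proof you need this $\eps$-interval idea: it is the mechanism that converts a bound on a ball in $G$ into a bound on all coefficients of the one-parameter polynomial, making the ``degree-by-degree'' bookkeeping unnecessary.
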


\begin{proof}
Let $m\in \N$ be such that $p\in \mathcal{P}_m$. Assume that there is no non-zero $X\in \gf$ such that $X^R p = 0$, and by contradiction assume also that $\Hs_V$ does not have purely discrete spectrum. Fix $r>0$. By Remark~\ref{remequivprop:1}, there is a sequence $(x_j)$ of elements of $G$ such that $x_j\to \infty$ for $j\to \infty$ and such that the sequence $ (\mathbf{1}_{B(x_j,r)} p)$ is uniformly bounded.
	
Observe that,  since $\exp$ is onto,  there are $t_j\geq 0$ and $X_j\in \gf$ such that $\abs{X_j}=1$ and $x_j=\exp(t_j X_j)$.  Since $x_j\to \infty$, one has $t_j\to +\infty$; we may then assume that $t_j\geq 1$ for every $j\in\N$. 
	 Take $r'>0$ and $\eps>0$ such that 
\begin{equation}\label{pallaprima}
	 \exp(Y) B(e,r')\subseteq B(e,r) \qquad \forall \, Y\in B_\gf(0,\eps),
\end{equation}
	 and let us prove  that for every $k\in\N$ there is a constant $C_k>0$ such that, for all $X\in \overline B_\gf(0,1)$ and $t\in \R$,
\begin{equation}\label{claim1}
	\norm{\mathbf{1}_{\exp(t X) B(e,r')} (X^R)^k p}_\infty\leq C_k \norm{\mathbf{1}_{\exp(t X) B(e,r)} p}_\infty.
\end{equation}
 Indeed, observe first that  since $p\in \Pc_m$, for $X\in \gf$ and $y\in G$ one has
	\[
	\bigg(\frac{d}{dt}\bigg)^m p(\exp(t X) y) = (X^R)^m p(\exp(t X) y )  = 0.
	\]
Hence the functions $t\mapsto p(\exp(t X) y)$, $t\in \R$, are polynomials on $\R$ of degree at most $m-1$, whose vector space we denote by $W$. Since derivatives are linear operators on the finite dimensional vector space $W$, they are continuous with respect to any norm on it; in particular, for every $k\in\N$ there is a constant $C_k>0$ such that
	\[
	\abs{q^{(k)}(0) }\leq C_k\norm{\mathbf{1}_{(-\eps,\eps)}q}_\infty
	\]
	for every $q\in W$. By the translation invariance of $W$, one gets
\begin{equation}\label{boundedder}
	\abs{q^{(k)}(t) }\leq C_k\norm{\mathbf{1}_{(t-\eps,t+\eps)}q}_\infty
\end{equation}
	for every $q\in W$ and $t\in \R$. Hence, by ~\eqref{boundedder} applied to the polynomial 
	\[
	t\mapsto p(\exp(t X) y),
	\]
and by~\eqref{pallaprima}, for $X\in \overline B_\gf(0,1)$, $y\in B(e,r')$ and $t\in \R$ we get
	\[
	|(X^R)^k p(\exp(t X) y )  |\leq C_k \norm{\mathbf{1}_{(t-\eps, t+\eps)} p(\exp(\,\cdot\, X) y ) }_\infty  \leq C_k \norm{\mathbf{1}_{\exp(t X) B(e,r)} p}_\infty,
	\]
whence our claim~\eqref{claim1}.
	
	Now observe that, by (finite) Taylor expansion,
\[
	[(X^R)^k p](\exp(t X) y)= \sum_{h=0}^{m-k-1} \frac{t^h}{h!} (X^R)^{k+h}p(y)
\]
	for every $X\in \gf$, $t\in \R$,  $y\in G$, and $k=0,\dots, m-1$.  By~\eqref{claim1}, for $k=0,\dots,m-1$ the functions
	\[
[(X_j^R)^k p](\exp(t_j X_j)  \, \cdot \, ) =\sum_{h=0}^{m-k-1} \frac{t_j^h}{h!} (X_j^R)^{k+h}p=\sum_{\ell=0}^{m-1} a^{(j)}_{k,\ell} t_j^{\ell} (X^R_j)^\ell p
	\]
	are uniformly bounded on $B(e,r')$ for $j\in \N$, where we set
\[
 a_{k,\ell}^{(j)}=
\begin{cases} 
 \frac{1}{(\ell-k)! t_j^k} \qquad & \mbox{if } 0\leq k\leq \ell \leq m-1\\
 0  & \mbox{if } 0\leq k,\ell\leq m-1, \; \ell<k.
\end{cases}
\]
Since, for every $j$, the matrices $(a_{k,\ell}^{(j)})$ are upper triangular with  diagonal elements constantly equal to $1$, for $j\in \N$ they  are uniformly bounded and have determinant $1$. Hence, the matrices $(a_{k,\ell}^{(j)})^{-1}$ are uniformly bounded as well, as $j\in \N$. Therefore, we deduce that the functions
	\[
	t_j^{\ell} (X_j^{R})^\ell p, \quad j\in \N,
	\]
	are uniformly bounded on $B(e,r')$ for every $\ell=0,\dots,m-1$. In particular, the functions
	\[
	t_j X_j^{R} p, \quad j\in \N,
	\]
	are uniformly bounded on $B(e,r')$. 
	
Since $\abs{X_j}=1$ for all $j$, up to considering a subsequence we may assume that $(X_j)$ converges to some $X_\infty$ in $\gf$, with $\abs{X_\infty}=1$.	Moreover, since $t_j \to \infty$ and $(t_j X_j^{R} p)$ is bounded on $B(e,r')$, the sequence $ (X_j^{R} p)_j$ converges uniformly to $0$ on $B(e,r')$, so that $X_\infty^R p=0$ on $B(e,r')$. Since $p$ is analytic  by Lemma~\ref{lem:9}, $X_\infty^R p=0$ on $G$: contradiction.
	
\smallskip	
	
	Conversely, assume that $X^{R}p=0$ for some non-zero $X\in \gf$, and let us prove that $\Hs_V$ does not have purely discrete spectrum. By Taylor expansion, as before,
	\[
	p(\exp(t X)y)=\sum_{h=0}^{m-1} \frac{t^h}{h!} (X^R)^h p (y) = p(y),
	\]
	whence the left hand side is uniformly bounded as $t\in \R$ and $y$ lies in a relatively compact open subset of $G$. Since $\exp(tX) \to \infty$ when $t\to \infty$, the conclusion follows from Remark~\ref{remequivprop:1}.
\end{proof}

The next goal in this section is to prove an analogue of Theorem~\ref{Thm:carpolinomio} on groups which are not exponential. To do this, we shall consider Leibman polynomials, and reduce the problem from a non-exponential group to a nilpotent group. The reason for considering Leibman polynomials here is that they behave well when passing to a quotient with respect to an element of the lower central series of the group;  and this will be one of the steps of our reduction.

The characterization that we obtain is in terms of vector fields whose flow is proper. In this order of ideas, we proceed with proving Lemmas~\ref{lemma:quotient} and~\ref{lem:10} below. Since the latter will be applied to some groups other than $G$, we distinguish from $G$ the Lie group therein.

\begin{lemma}\label{lemma:quotient}
Let $p\colon G\to \R^n$ be a Leibman polynomial. Then,  there is a closed connected normal subgroup $N$ of $G$ such that $G/N$ is simply connected and nilpotent, and such that $p = q\circ \pi$, where $\pi \colon G\to G/N$ is the canonical projection and $q$ is a Leibman polynomial on $G/N$.
\end{lemma}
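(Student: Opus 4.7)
The plan is to construct the subgroup $N$ in two stages. Let $m$ denote the degree of $p$, and let $\gf^{(1)}:=\gf$, $\gf^{(k+1)}:=[\gf,\gf^{(k)}]$ be the lower central series. First I would find a closed connected normal subgroup $N_1$ whose Lie algebra contains $\gf^{(m+1)}$, so that $G/N_1$ is nilpotent of step at most $m$ and $p$ factors through it. Then I would enlarge $N_1$ to $N$ so that $G/N$ also becomes simply connected, by exploiting the maximal torus in the connected nilpotent Lie group $G/N_1$.

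For the first stage, the crux is to show that $Z^R p=0$ for every $Z\in\gf^{(m+1)}$. Any such $Z$ is a linear combination of $(m+1)$-fold iterated Lie brackets. Since the correspondence $X\mapsto X^R$ is an anti-homomorphism of Lie algebras (i.e.\ $[X,Y]^R=-[X^R,Y^R]$), the right-invariant field attached to an iterated bracket is, up to sign, the iterated bracket of the right-invariant fields, and this expands as a differential operator into a linear combination of $(m+1)$-fold products $X_{i_1}^R\cdots X_{i_{m+1}}^R$. Each such product annihilates $p$ by the Leibman condition, giving $Z^R p=0$ and hence $p(\exp(tZ)g)=p(g)$ for every $t\in\R$ and $g\in G$. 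Therefore $p$ is invariant under left multiplication by the analytic subgroup $M$ corresponding to $\gf^{(m+1)}$, and by continuity under $N_1:=\overline{M}$; since $\gf^{(m+1)}$ is an ideal, $M$ and hence $N_1$ are normal. Thus $p$ descends to a function $q_1$ on the connected nilpotent Lie group $F:=G/N_1$.

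For the second stage, let $\tilde F$ be the simply connected nilpotent cover of $F$, so $F=\tilde F/L$ for some discrete central subgroup $L\subseteq\tilde F$, and let $\tilde q_1$ be the lift of $q_1$ to $\tilde F$. The Leibman property descends through the homomorphisms $G\to F$ and lifts through $\tilde F\to F$, since right-invariant fields lift compatibly; hence $\tilde q_1$ is a Leibman polynomial of degree $\leq m$ on $\tilde F$ that is $L$-invariant under left translation. On a simply connected nilpotent Lie group, $\exp$ is a global diffeomorphism and a Leibman polynomial becomes, in exponential coordinates, an ordinary polynomial on the Lie algebra; this can be verified by induction on the nilpotency step (or cf.\ \cite{AntonelliLeDonne}). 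The centrality of $L$, combined with the Baker--Campbell--Hausdorff formula, implies that the $L$-invariance of $\tilde q_1$ translates to the invariance of the polynomial $\tilde q_1\circ\exp$ under translation by the lattice $\log L$ contained in the subspace $\kf\subseteq\mathrm{Lie}(\tilde F)$ that it spans. Since any polynomial invariant under translation by a lattice in a subspace is invariant under the whole subspace, $\tilde q_1$ is invariant under the closed connected central subgroup $K_0:=\exp\kf$. Taking $N$ to be the preimage of the torus $T:=K_0/L$ under $G\to F$ gives a closed connected normal subgroup of $G$ such that $G/N=\tilde F/K_0$ is simply connected nilpotent and $p=q\circ\pi$ for a Leibman polynomial $q$ on $G/N$.

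The hardest part will be the second stage, which relies on two ingredients: the structural fact that Leibman polynomials on simply connected nilpotent Lie groups are polynomial in exponential coordinates, and the elementary observation that polynomial functions invariant under a lattice in a subspace are automatically invariant under the entire subspace.
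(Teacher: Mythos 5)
Your proposal is correct and follows essentially the same three-step strategy as the paper's proof: kill the right-invariant action of $\gf^{(m+1)}$ (the paper's $\widetilde{\hf}$), pass to the simply connected nilpotent cover of the quotient, and enlarge by the subspace spanned by the (central, discrete) kernel using the fact that a polynomial periodic in a lattice is invariant under the subspace it spans. The only difference is cosmetic: you spell out why $Z^R p = 0$ for $Z \in \gf^{(m+1)}$ by expanding iterated brackets of right-invariant fields, a step the paper cites as immediate from the definition of Leibman polynomial.
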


\begin{proof}
By the definition of a Leibman polynomial, there is an element $\widetilde{\mathfrak{h}}$ of the lower central series of $\gf$ such that 
\begin{equation}\label{htilde}
X^R p=0, \qquad \forall X\in\widetilde{\mathfrak{h}}.
\end{equation}
For the sake of clarity, we summarize here the strategy of the proof.
\begin{itemize}
\item[Step 1.] We show that if $H$ is the closure of the integral subgroup of $G$ corresponding to $\widetilde{\hf}$, then $p$ is right $H$-invariant, hence it induces a Leibman polynomial on the nilpotent Lie group $G/H$, whose Lie algebra we denote with $\mathfrak{m}$.
\item[Step 2.] We observe that $G/H$ is the quotient of the simply connected Lie group $M$ with same Lie algebra $\mathfrak{m}$, modulo a discrete subgroup $D$ of the center of $M$.
\item[Step 3.] We prove that the polynomial on $M$ induced by $p$ is right invariant with respect to the smallest integral subgroup of $M$ containing $D$, so that it induces a Leibman polynomial on the quotient. We then reconstruct $p$ in terms of this last Leibman polynomial.
\end{itemize}

{Step 1.} Let $\widetilde H$ be the connected integral subgroup of $G$ whose tangent Lie algebra is $\widetilde \hf$, and observe that $\widetilde H$ is a normal subgroup, cf.~\cite[Def.\ 1, Thm.\ 2, Prop.\ 14, Ch.\ III, \S 6]{BourbakiLie2}. Then, the closure $H$ of $\widetilde H$  is a closed normal Lie subgroup of $G$, hence its Lie algebra $\hf$ is an ideal of $\gf$ containing $\widetilde \hf$.

We now show that 
\begin{equation}\label{Hinvariance}
p(x y)=p(x) \qquad \forall \,x\in G, \; y\in H,
\end{equation}
so that $X^R p=0$ for every $X\in \hf$. By~\eqref{htilde}, property~\eqref{Hinvariance} holds when $y\in \exp_G(\widetilde \hf)$, hence when $y$ is a finite products of   elements of $\exp_G(\widetilde \hf)$. Since $\exp_G(\widetilde \hf)$ is a symmetric neighbourhood of the origin in $\widetilde H$,~\eqref{Hinvariance} holds for every $y\in \widetilde  H$. By continuity, this property extends to every $y\in H$.

By~\eqref{Hinvariance}, $p$ is constant on $xH$ for all $x$, hence it induces a function $p_1$ on $G/H$. In other words, if $\pi_1\colon G\to G/H$ is the canonical projection, then
	\[
	p=p_1\circ \pi_1.
	\]
	 By~\cite[Proposition 1.11]{Leibman}, $p_1$ is a Leibman polynomial on $G/H$.

{Step 2.} Let $M$ be a simply connected Lie group with nilpotent Lie algebra $\mathfrak{m}= \gf/\hf$, and observe that, since $M$ is nilpotent, it may be identified with its Lie algebra by means of the exponential map. In addition, by~\cite[Theorem 3, Ch.\ III, \S6, No.\ 3]{BourbakiLie2}, there is an  analytic homomorphism $\pi_2\colon M\to G/H$ such that $\pi_2$ is onto and $\ker \pi_2$ is a discrete subgroup $D$ of the centre $Z$ of $M$. Observe that $Z$ can be identified with the centre $\mathfrak{z}$ of $\mathfrak{m}$ (as a manifold and as a group) by means of the exponential map by~\cite[Propositions 13 and 15, Ch.\ III, \S 9, No.\ 5]{BourbakiLie2}. We define $D'= \exp_{M}^{-1}(D)$, and observe that $D'$ is a (closed) discrete subgroup of $\mathfrak{z}$.

Step 3. Again by~\cite[Proposition 1.11]{Leibman}, $p_1\circ \pi_2$ is a Leibman polynomial on $M$; that is, since $M$ is nilpotent,
\[
p_2\coloneqq p_1\circ \pi_2\circ \exp_{M}
\]
is a polynomial on $\mathfrak{m}$, by~\cite[Corollary 1.4]{AntonelliLeDonne}. 

In addition, since $D=\ker \pi_2$, one has $p_2(X_1+X_2)=p_2(X_1)$ for every $X_1\in \mathfrak{m}$ and $X_2\in D'$, hence for every $X_2$ in the vector subspace $\mathcal{V}$ of $\mathfrak{z}$ generated by $D'$. Therefore, $p_2$ induces a polynomial map $p_3$ on the simply connected nilpotent Lie group $M/\exp_{M} \mathcal{V}$, i.e.\
\[
p_2=p_3\circ \pi_\mathcal{V} \circ \exp_M
\]
	where $\pi_\mathcal{V} \colon M\to M/\exp_M \mathcal{V}$ is the canonical projection.
	
To conclude, observe that $\exp_{M}\mathcal{V}/D$ is a closed connected normal subgroup of $M/D$, so that there are a closed connected normal subgroup $N$ of $G$ (containing $H$), an isomorphism $\pi_3\colon M/\exp_{M} \mathcal{V}\to G/N$, and a Leibman polynomial $q$ on $G/N$ such that 
\[
q\circ \pi_3=p_3.
\]
Therefore,  $q\circ \pi=p$, where $\pi\colon G\to G/N$ denotes the canonical projection.
\end{proof}

\begin{lemma}\label{lem:10}
Let $H$ be a connected noncompact Lie group with Lie algebra $\mathfrak{h}$. Then, there is $X\in \hf$ such that the map $t \mapsto \exp_{H}(t X)$ is proper.
\end{lemma}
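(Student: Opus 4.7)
My plan is to prove the contrapositive: \emph{if for every $X\in \hf$ the map $t \mapsto \exp_H(tX)$ fails to be proper, then $H$ is compact}. The starting observation will be that $t\mapsto\exp_H(tX)$ is proper precisely when $A_X\coloneqq \overline{\exp_H(\R X)}$ is non-compact in $H$: indeed, $A_X$ is a closed connected abelian Lie subgroup, hence isomorphic to $\R^a\times\T^b$, in which the one-parameter subgroup sits densely, so it escapes every compact set of $H$ if and only if $a\geq 1$.

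Assuming that $A_X$ is compact for every $X\in\hf$, I would next argue as follows. The continuous image $\mathrm{Ad}(A_X)\subset GL(\hf)$ is compact and contains $\{\exp(t\,\mathrm{ad}(X)) : t\in\R\}$, so this one-parameter subgroup has compact closure in $GL(\hf)$; a standard linear-algebra argument then forces $\mathrm{ad}(X)$ to be semisimple with purely imaginary spectrum. Consequently $\kappa(X,X)=\tr(\mathrm{ad}(X)^2)\leq 0$ for every $X\in\hf$, so the Killing form $\kappa$ of $\hf$ is negative semi-definite, and $\kappa(X,X)=0$ forces $\mathrm{ad}(X)=0$. Hence the null-space of $\kappa$ coincides with the center $\mathfrak{z}(\hf)$.

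To finish, I would invoke structure theory. The quotient $\hf/\mathfrak{z}(\hf)$ is semisimple (Cartan's criterion) with negative definite induced Killing form, i.e., of compact type; by Whitehead's lemma the central extension $0\to\mathfrak{z}(\hf)\to\hf\to\hf/\mathfrak{z}(\hf)\to 0$ splits, giving a Lie algebra direct sum $\hf=\mathfrak{z}(\hf)\oplus\mathfrak{s}$ with $\mathfrak{s}$ semisimple compact. By Weyl's theorem the simply connected group with Lie algebra $\mathfrak{s}$ is compact, hence the integral subgroup $S\subseteq H$ corresponding to $\mathfrak{s}$ is compact and closed in $H$. The identity component $Z(H)_0$ of the center has Lie algebra $\mathfrak{z}(\hf)$, so $Z(H)_0\cong \R^a\times\T^b$; the standing hypothesis applied to a vector generating the $\R^a$-factor forces $a=0$, so $Z(H)_0$ is also compact. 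Since $\hf=\mathfrak{z}(\hf)+\mathfrak{s}$, the subgroup $Z(H)_0\cdot S$ is open in the connected group $H$ and therefore equal to $H$; being the image of the product of two compact sets, $H$ is compact, contradicting our hypothesis.

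The main difficulty I anticipate is the structure-theoretic step: rigorously combining Cartan's criterion, Whitehead's splitting lemma, and Weyl's theorem to pass from Killing-form information on $\hf$ back to compactness of $H$ itself, with some care needed to descend from the simply connected cover to the possibly multiply connected group $H$.
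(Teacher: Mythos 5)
Your proof is correct, and it takes a genuinely different route from the one in the paper. The paper constructs $X$ directly: by W\"ustner's theorem it writes $H=KS$ with $K$ a closed connected subgroup whose Lie algebra is compact and $S$ solvable, so it suffices to treat the two cases $\hf$ compact and $\hf$ solvable; in the first it uses Bourbaki's splitting $H=N\times K$ with $N$ a central vector group, and in the second it uses Chevalley's coordinate theorem on the universal cover of a solvable group together with the lattice structure of the kernel to locate a coordinate direction whose one-parameter group escapes to infinity. You instead argue by contraposition: assuming every one-parameter subgroup has compact closure $A_X$, you pass through $\mathrm{Ad}(A_X)$ to see that each $\mathrm{ad}(X)$ is semisimple with purely imaginary spectrum, so the Killing form is negative semi-definite with radical $\mathfrak{z}(\hf)$; since $\mathrm{ad}$ acts trivially on $\mathfrak{z}(\hf)$, the Killing form of $\hf/\mathfrak{z}(\hf)$ is the (negative definite) quotient form, so $\hf/\mathfrak{z}(\hf)$ is compact-type semisimple, and Whitehead and Weyl give a compact integral subgroup $S\subseteq H$; the hypothesis applied once more to central vectors forces $Z(H)_0$ to be a torus, and $H=Z(H)_0\cdot S$ is then compact. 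What the paper's proof buys is that it avoids the semisimple apparatus entirely (no Killing form, Cartan, Whitehead, Weyl) and produces $X$ constructively, at the cost of two somewhat specialized structural inputs (W\"ustner, Chevalley). What your proof buys is a cleaner conceptual statement --- a connected Lie group is compact if and only if every one-parameter subgroup has compact closure --- derived from textbook semisimple theory, and the reduction $\hf=\mathfrak{z}(\hf)\oplus[\hf,\hf]$ is reached in one linear-algebraic stroke rather than by case analysis. Both are complete and correct; your anticipated worry about descending from the simply connected cover is actually harmless here, since you only need that the image in $H$ of the compact simply connected group with Lie algebra $\mathfrak{s}$ is compact, which is automatic.
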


\begin{proof}
We claim that the statement holds when $\hf$ is compact  and when $\hf$ is solvable. Assuming the claim, we complete the proof.

By~\cite[Theorem 3]{Wustner}, $H$ can be decomposed as $H=K S$ where $K$ is a closed connected subgroup of $H$ whose Lie algebra  is compact, and $S$ is a solvable connected subgroup of $H$. Observe that $\overline S$ is a solvable connected subgroup of $H$ by~\cite[Corollary 2 to Proposition 1, Ch.\ III, \S 9, No.\ 1]{BourbakiLie2}, so that we may assume that $S$ is closed in $H$. Since $H$ is not compact, either $K$ or $S$ is not compact. 

Therefore $H$ has a closed, connected and noncompact subgroup $H_1$, with Lie algebra $\mathfrak{h}_1$, with the property that, by the claim, there is $Y\in\mathfrak{h}_1$ such that $t \mapsto \exp_{H_1}(t Y)$ is proper. By identifying $\mathfrak{h}_1$ with a subalgebra of $\hf$, we get $\exp_{H_1}=\exp_{H}$ on $\hf_1$.  Since the embedding $H_1 \to H$ is proper, as $H_1$ is closed in $H$, the conclusion follows. We are then left with proving the claim.

\smallskip

Assume first that $\hf$ is compact. By~\cite[Proposition 5, Ch.\ IX, \S 1, No.\ 4]{BourbakiLie5},  there are a closed, central, simply connected subgroup $N$ of $H$ and a connected compact subgroup $K$ of $H$ such that $H$ is the direct product $N\times K$. Observe that $N$ is an abelian group and, since $H$ is not compact, $N$ is not trivial. If $\mathfrak{n}$ is the Lie algebra of $N$, identified with an ideal of $\mathfrak{h}$, then $\exp_{H}$ induces an isomorphism of $\mathfrak{n}$ onto $N$, so that, for every non-zero $X\in \mathfrak{n}$, the map $ t \mapsto \exp_{H}(t X) $ is proper.
	
\smallskip	
	
Assume now that $\hf$ is solvable. Then, there are a simply connected Lie group $S$ with Lie algebra $\hf$, and a surjective analytic homomorphism $\pi\colon S\to H$ such that $\dd\pi$ is the identity and $\ker \pi$ is a discrete closed subgroup of the centre of $S$ (cf.~\cite[Theorem 3, Ch.\ III, \S 6, No.\ 3]{BourbakiLie2}).

\noindent By~\cite[Theorem 1]{Chevalley}, there are a basis $(X_1,\dots, X_k)$ of $\hf$ as a vector space and integers $0\leq r\leq k$, $1\leq j_1<\cdots<j_r\leq k$ such that 
\begin{itemize}
\item[(i)] $\ker \pi$ is a free abelian group of rank $r$;
\item[(ii)] the map 
	\[
	\varphi\colon \R^k \to S, \qquad  \varphi(t) = \exp_{S}(t_1 X_1)\cdots \exp_{S}(t_k X_k),
	\]
	is an analytic bijection;
\item[(iii)] $X_{j_1},\dots, X_{j_r}$ generate an abelian subalgebra of $\hf$;
\item[(iv)] $( \exp_{S}(X_{j_1}),\dots,\exp_{S}(X_{j_r}) )$ is a basis of $\ker \pi$ as a $\Z$-module.
\end{itemize}
Observe that if $r=k$, then $\hf$ is abelian, hence $S=\R^k$ and, up to a change of coordinates, $\ker \pi=\Z^k$. Thus $H\cong \T^k$, in particular $H$ is compact. Since this is not the case, we actually have $r<k$. 
Then, there is $j_0\in \{1,\dots, k\}\setminus \{j_1,\dots, j_r\}$. Let us prove that the map $t \mapsto \exp_{H}(t X_{j_0})$ is proper.
	
Assume by contradiction that there exists a compact subset $L$ of $H$ such that $A\coloneqq \{t\in \R\colon \exp_{H}(t X_{j_0}) \in L\}$ is unbounded. Notice that since $\exp_H(n X_{j_\ell} )=\exp_H(X_{j_\ell})^n$ is in the centre of $S$ for every $\ell=1,\dots, r$ and $n\in\Z$, 
\[
\varphi(t) \prod_{\ell=1}^r \exp_H( X_{j_\ell})^{n_{j_\ell}}= \varphi\bigg(t+ \sum_{\ell=1}^r n_\ell e_{j_\ell}\bigg)
\]
for every $t\in \R^k$ and $n_1,\dots, n_r\in \Z$, where $(e_j)_{j=1,\dots,k}$ is the canonical basis of $\R^k$. In addition, by~\cite[Proposition 18, Ch.\ IX, \S 2, No.\ 10]{BourbakiGT2}, there is a compact subset $L'$ of $\R^k$ such that $L=\pi(\varphi(L'))$, so that
	\[
A e_{j_0}\subseteq	\varphi^{-1}(\pi^{-1}(L))= \sum_{\ell=1}^r \Z e_{j_\ell}+ L'.
	\]
This is a contradiction because $A e_{j_0}$ is unbounded, while $\R e_{j_0}\cap (\sum_{\ell=1}^r \Z e_{j_\ell}+L')$ is bounded.
\end{proof}

\begin{proposition}\label{prop:carpolinomio}
Assume that $V=f\circ p$ for some Leibman polynomial $p\colon G \to \R^n$ and a proper map $f\colon \R^n \to \R$. Then, $\Hs_V$ has purely discrete spectrum if and only if there is no $X\in\gf$ such that $X^R p=0$ and such that the map $ t\mapsto \exp_G(t X)$ is proper.
\end{proposition}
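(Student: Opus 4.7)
My plan is to bootstrap from Theorem~\ref{Thm:carpolinomio} by using Lemma~\ref{lemma:quotient} to reduce to the simply connected nilpotent quotient $G/N$, and to handle the case when $N$ itself is noncompact separately via Lemma~\ref{lem:10}.

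For the (easy) ``if'' direction I would argue by contrapositive. Suppose $X\in\gf$ satisfies $X^R p=0$ and $t\mapsto \exp_G(tX)$ is proper. Since $X^R$ generates the flow $y\mapsto \exp_G(tX)y$, the identity $X^R p=0$ forces $p(\exp_G(tX)y)=p(y)$ for all $t\in\R$, $y\in G$. Set $x_k=\exp_G(kX)$; properness gives $x_k\to\infty$, while $p\restriction_{B(x_k,r)}=p\restriction_{x_k B(e,r)}$ is bounded by $\sup_{B(e,r)}|p|$ uniformly in $k$. By Remark~\ref{remequivprop:1}, $\Hs_V$ fails to have purely discrete spectrum.

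For the converse, assume $\Hs_V$ does not have purely discrete spectrum. By Remark~\ref{remequivprop:1}, there exist $r>0$ and $x_k\to\infty$ in $G$ such that $p$ is uniformly bounded on $B(x_k,r)$. Apply Lemma~\ref{lemma:quotient} to write $p=q\circ\pi$ with $\pi\colon G\to G/N$ the canonical projection, $G/N$ simply connected nilpotent, and $q$ a Leibman polynomial on $G/N$; by \cite[Corollary 1.4]{AntonelliLeDonne}, $q$ is in fact a $(\gf/\mathfrak{n})$-polynomial on the exponential group $G/N$, where $\mathfrak{n}=\Lie(N)$. I would then split on whether $N$ is compact. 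If $N$ is noncompact, Lemma~\ref{lem:10} applied to $N$ produces $X\in\mathfrak{n}$ with $t\mapsto\exp_N(tX)$ proper; since $N$ is closed in $G$, the inclusion is proper and hence $t\mapsto\exp_G(tX)$ is proper in $G$. Moreover, $p=q\circ\pi$ is right $N$-invariant, so $X^R p=0$, and we are done.

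If instead $N$ is compact, then $\pi$ is a proper map, so $\pi(x_k)\to\infty$ in $G/N$. Endowing $G/N$ with the sub-Riemannian structure induced by $d\pi(\Hc)$ (which generates $\gf/\mathfrak{n}$), the submersion property of $\pi$ yields some $r'>0$ with $\pi(B^G(e,r))\supseteq B^{G/N}(e,r')$; using that $\pi$ is a homomorphism and the left invariance of both balls, $\pi(B^G(x_k,r))\supseteq B^{G/N}(\pi(x_k),r')$, so $q$ is uniformly bounded on these balls. Remark~\ref{remequivprop:1} applied on $G/N$ to the potential $V'=f\circ q$ shows that $\mathcal{H}_{V'}$ fails to be discrete there, and Theorem~\ref{Thm:carpolinomio} then produces a nonzero $Y\in\gf/\mathfrak{n}$ with $Y^R q=0$; properness of $t\mapsto\exp_{G/N}(tY)$ is automatic since $G/N$ is simply connected nilpotent. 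Lifting to any $X\in\gf$ with $d\pi(X)=Y$, the intertwining $d\pi(X^R)=Y^R$ gives $X^R p=X^R(q\circ\pi)=(Y^R q)\circ\pi=0$, and continuity of $\pi$ combined with $\pi\circ\exp_G(t\,\cdot\,X)=\exp_{G/N}(t\,\cdot\,Y)$ transfers properness back to $t\mapsto\exp_G(tX)$.

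The main subtlety is the compactness dichotomy for $N$: when $N$ is noncompact, the ``bad direction'' lives inside $N$ and is detected via Lemma~\ref{lem:10} independently of the spectral assumption, while when $N$ is compact, the obstruction comes from $G/N$ and must be pulled back through $\pi$; checking that properness survives the lift to $G$ is where a little care is required, but it reduces to the observation that $\pi$-preimages of unbounded nets are unbounded, which is immediate since $\pi$ is continuous.
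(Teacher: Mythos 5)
Your proof is correct, and it follows the paper's overall strategy: use Lemma~\ref{lemma:quotient} to factor $p = q\circ\pi$ through a simply connected nilpotent quotient $G/N$, use Lemma~\ref{lem:10} to produce a proper one-parameter subgroup inside $N$ when the obstruction lives there, and use Theorem~\ref{Thm:carpolinomio} on $G/N$ when it does not. The one genuine departure is where you place the dichotomy. The paper dichotomizes on the behaviour of the sequence $(\pi(x_k))$: either it is relatively compact (which then forces $N$ to be noncompact, so Lemma~\ref{lem:10} applies to $N$), or a subsequence tends to infinity in $G/N$ (so Theorem~\ref{Thm:carpolinomio} applies). You instead dichotomize directly on whether $N$ is compact. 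When $N$ is noncompact, your argument produces the proper direction $X\in\mathfrak n$ with $X^Rp=0$ without ever using the spectral hypothesis, which makes explicit the (true, but implicit in the paper) observation that a noncompact $N$ alone already rules out discrete spectrum. When $N$ is compact, properness of $\pi$ forces $\pi(x_k)\to\infty$, and you proceed as in the paper's case (b). Your version is in that sense slightly cleaner and more structural (it does not depend on the choice of $(x_k)$), while the paper's version is shorter because it only introduces the noncompactness of $N$ where it is actually forced. You also spell out two small points the paper leaves implicit: that $q$, being a Leibman polynomial on the exponential nilpotent group $G/N$, is a $(\gf/\mathfrak n)$-polynomial (via \cite[Corollary 1.4]{AntonelliLeDonne}) so that Theorem~\ref{Thm:carpolinomio} applies, and that $\pi(B^G(x_k,r))\supseteq B^{G/N}(\pi(x_k),r')$ so that boundedness of $q$ on the pushed-forward sets yields boundedness on genuine balls in $G/N$ as required by Remark~\ref{remequivprop:1}.
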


\begin{proof}
By Lemma~\ref{lemma:quotient}, there is a closed connected normal subgroup $N$ of $G$ such that $G/N$ is simply connected and nilpotent, and such that $p$ factors through the canonical projection $\pi \colon G\to  G/N$, namely $p=q \circ \pi$, where $q$ is a Leibman  polynomial on $G/N$.
	
Fix $r>0$. By Remark~\ref{remequivprop:1}, $\Hs_V$ does not have purely discrete spectrum if and only if there is a sequence $(x_k)$ in $G$ such that $x_k\to \infty$ and such that $p$ is uniformly bounded on the sets $B(x_k,r)$, $k\in\N$. This happens if and only if $q$ is uniformly bounded on the sets $\pi(B(x_k,r))$, $k\in\N$. 
Two possibilities may arise:
\begin{enumerate}
\item[(a)] the sequence $(\pi(x_k))$ stays in a compact subset $K$ of $G/N$;
\item[(b)] there is a subsequence of $(\pi(x_k))$ which goes to $\infty$ in $G/N$.
\end{enumerate}

If (a) holds, then $\ker \pi =N$ is not compact. Therefore, there is $X$ in the ideal of $\gf$ tangent to $N$ such that the map $t\mapsto \exp_G(t X)\in G$ is proper by Lemma~\ref{lem:10}, and clearly $X^R p=X^R(q\circ \pi)=0$. Thus, the ``only if'' part  is proved when condition (a) holds. 

Assume instead that (b) holds. By Remark~\ref{remequivprop:1} applied to $G/N$, $\dd\pi(\Ls)$, and $f\circ q$, the operator $\dd \pi(\Ls)+f\circ q$ does not have purely discrete spectrum. Then, by Theorem~\ref{Thm:carpolinomio}, there is a non-zero right-invariant vector field $Y$ on $G/N$ such that $Y q=0$ on $G/N$.  Since there is  $X\in \gf$  such that $\dd \pi(X)^R= Y $ as $\dd\pi$ is surjective, we deduce $X^R p=0$. In addition, the map $t\mapsto \exp_G(t X)\in G$ is proper, for otherwise there would be a compact subset $L$ of $G$ such that $\exp_G(A X)\subseteq L$ for some unbounded subset $A$ of $\R$, whence $\exp_{G/N}(A Y^R )\subseteq \pi(L)$, which is a contradiction. Thus, the ``only if'' part is proved also if condition (b) is satisfied,  hence its proof is complete.
	
To show the ``if'' part,  by Remark~\ref{remequivprop:1} it suffices to show that, if there is $X\in\gf$ such that $X^R p=0$ and such that the map $ t\mapsto \exp_G(t X) $ is proper, then the sequence $(\exp_G(k X))$ in $G$  converges to $\infty$ and   $p$ is uniformly bounded on  the balls $B(\exp_G(k X),r)$, $k\in\N$. Since $p$ is then constant on the integral curves of $X^R$, which are of the form $t\mapsto  \exp_G(t X)x$ for $x\in G$, the assertion follows.
\end{proof}

We conclude this section with some remarks and notable examples of Schr\"odinger operators with polynomial potentials on the Heisenberg groups. 

\subsection{Examples and remarks.~The Heisenberg groups}
For $n\in \N$, the $(2n+1)$-dimensional  Heisenberg group $\mathbb{H}^n$ is the $2$-step stratified Lie group whose underlying manifold is $ \R^{n}\times \R^n\times \R$, endowed with the group law
\[
(x, y, t) \cdot (x', y', t')  =  \Big(x + x', y + y', t + t' + \frac{1}{2}(x\cdot  y' - y\cdot x')\Big), \quad x,y,x',y'\in \R^n, \;\; t,t'\in \R.
\]
Its Lie algebra $\mathfrak{h}^n$ is spanned, as a vector space, by the left-invariant vector fields $ X_1,\dots, X_n,Y_1, \dots, Y_n,T$ given by
\[
X_j= \partial_{x_j} - \frac{1}{2}y_j\partial_t, \qquad Y_j= \partial_{y_j} + \frac{1}{2}x_j\partial_t, \qquad j=1,\dots, n,\qquad  T= \partial_t, 
\]
which satisfy the relations
\[
[X_j,Y_j] = T, \quad [X_j,T] = [Y_j,T]=0, \qquad j=1,\dots, n.
\]
Their corresponding right-invariant vector fields are 
\[
X_j^R= \partial_{x_j} + \frac{1}{2}y_j\partial_t, \qquad Y_j^R= \partial_{y_j} - \frac{1}{2}x_j\partial_t, \qquad j=1,\dots, n,\qquad  T^R= \partial_t.
\]
The group $\mathbb{H}^n$ is unimodular, and if $\mu$ is the left (and right) Haar measure, namely the Lebesgue measure, then $\Ls = - \sum_{j=1}^n (X_j^2+Y_j^2)$ is a sum-of-squares sub-Laplacian. If $n=1$, we just write $X_1=X$ and $Y_1=Y$.

 It has recently become of interest  the problem of introducing on $\mathbb{H}^n$ an analogue of the Euclidean harmonic oscillator  $\Delta + |\cdot|^2$, where $\Delta = - (\partial_{x_1}^2 + \dots + \partial_{x_n}^2)$ is the classical Laplacian. Since the Euclidean harmonic oscillator is a Schr\"odinger operator (with $|\cdot|^2$ as a potential), it is reasonable to expect that its analogue on $\mathbb{H}^n$ is a Schr\"odinger operator as well. Two different candidates have been proposed, in~\cite{Fischer} and~\cite{RR1, RR2} respectively, as Schr\"odinger operators in the form $\Ls + V$, for some polynomial potentials $V$, and a quantitative description of their spectrum has been given. By Theorem~\ref{Thm:carpolinomio}, we can obtain at once a qualitative, though not quantitative, description of their spectrum and shed some light on the nature of their difference. 
 
In~\cite{Fischer}, it was proved that, if $n=1$, then the spectrum of
\[
-X^2 - Y^2 + x^2 + y^2
\]
is a half-line. In particular, it is \emph{not} discrete; this latter fact can be easily obtained from Theorem~\ref{Thm:carpolinomio}, as $\partial_t (x^2+y^2) =0$. In~\cite{RR1, RR2}, an harmonic oscillator was defined by means of the representation theory of the Dynin--Folland group, having the form
\[
- \sum_{j=1}^n (X_j^2+Y_j^2) + \gamma t^2,
\]
for a suitable $\gamma>0$.  It was shown in~\cite{RR2} that its spectrum is discrete, and asymptotic estimates were given for the distribution of its eigenvalues.  Again, the discreteness of the spectrum of this operator follows  at once from Theorem~\ref{Thm:carpolinomio}; the reader can in fact easily verify that there are no nonzero right-invariant vector fields which annihilate $(x,y,t)\mapsto \gamma t^2$.

Observe, moreover, that there are several polynomial potentials (in the wide sense as above, possibly involving a proper map) which may give rise to good candidates for harmonic oscillators on $\mathbb{H}^n$. For example, one may consider a Schr\"odinger operator $\Hs_V$ where $V =N^2$, being $N$ a homogeneous norm on $\mathbb{H}^n$ such as the Kaplan norm
\[
N(x,y,t)  = \sqrt[4]{( |x|^2 + |y|^2)^2 + 16t^2}.
\]
By Proposition~\ref{prop:2},  such an $\Hs_V$ has purely discrete spectrum.

\smallskip

As a final remark, we also observe that one cannot replace the right-invariant vector fields with the left-invariant ones in Theorem~\ref{Thm:carpolinomio}, in general. The polynomial 
\[
p(x,y,t) = y^2 x +2 y t
\] on $\mathbb{H}^1$ is annihilated by $X$, while it is not annihilated by any right-invariant vector fields, hence by Theorem~\ref{Thm:carpolinomio} the corresponding Schr\"odinger operator has purely discrete spectrum. Analogously, the polynomial \[
p(x,y,t) = y^2 x -2 y t
\]
on $\mathbb{H}^1$ is annihilated by $X^R$, while it is not annihilated by any left-invariant vector field.

\section{Muckenhoupt potentials}\label{Sec:Muck}

The aim of this section is to study the case when the potential $V$ is a local Muckenhoupt weight. To do this, we first develop a basic theory  for such weights on $G$; recall that as a metric measure space, $G$ is locally doubling but, in general, not doubling. For more information about Muckenhoupt weights on \emph{doubling} measure spaces, we refer the reader to~\cite{StrombergTorchinsky}.

For $R>0$, let $\mathcal{B}_R$ be the set of all balls of radius at most $R$. If $B$ is a ball of radius $r$ and $\lambda>0$, we denote by $\lambda B$ the ball with same centre and radius $\lambda r$. For a non-zero positive $w\in L^1_\loc$, we denote by $\mi_w$ the measure with density $w$ with respect to $\mi$, and consider the following conditions.
	\begin{enumerate}
		\item[\textnormal{(1)}] There are $\eps,\delta\in (0,1)$ such that, for every ball $B\in \mathcal{B}_R$ and every Borel subset $F$ of $B$,
		\[
		\mi(F)\leq \eps \mi(B) \implies \mi_w(F)\leq \delta \mi_w(B).
		\]
		\item[\textnormal{(2)}] There are $p\in [1,\infty)$ and $C>0$ such that, for every ball $B\in \mathcal{B}_R$ and every Borel subset $F$ of $B$,
		\[
		\frac{\mi_w(F)}{\mi_w(B)}\geq C \left( \frac{\mi(F)}{\mi(B)} \right)^p.
		\]
		\item[\textnormal{(3)}] There are $p\in (1,\infty)$ and $C>0$ such that, for every ball $B\in \mathcal{B}_R$,
		\[
		\bigg(\dashint_{B} w\,\dd \mi \bigg)\bigg( \dashint_{B} w^{-p'/p}  \bigg)^{p/p'}\leq C.
		\]	
		\item[\textnormal{(4)}] There are $\delta,c>0$ such that, for every ball $B\in \mathcal{B}_R$,
		\begin{equation*}
			\mu\bigg( \bigg\{ x\in B \colon w(x)\geq \delta \dashint_B w\, \dd \mu\bigg\} \bigg) \geq c \mu(B).
		\end{equation*}
	\end{enumerate}
Note that, a priori, condition (2) is meaningless if $\mi_w(B)=0$. However, if (2) holds for all $B\in \Bc_R$ such that $\mi_w(B)>0$, then the local doubling property of $\mi$  (recall~\eqref{localdoubling}) implies that $\mi_w$ is doubling on the balls in $\Bc_R$. Since $G$ is connected and $\mi_w\neq0$ by assumption, one gets $\mi_w(B)>0$ for all balls $B$.

 The following result is a local version of~\cite[Corollary 14]{StrombergTorchinsky}, and will lead us to the definition of some classes of local Muckenhoupt weights.

\begin{proposition}\label{prop:ST}
Let $R>0$ and a positive $w\in L^1_\loc$ be given.	Then
\[
(4) \implies (1), \qquad (3) \implies (2) \implies (1), \qquad\mbox{ and}  \qquad \ (3) \implies (4).
\]
If there is a constant $C>0$ such that
\begin{equation}\label{5r}
	\mi_w(5B)\leq C\mi_w(B)
\end{equation}
	for every $B\in \mathcal{B}_R$, then conditions \textnormal{(1)--(4)} are equivalent.
\end{proposition}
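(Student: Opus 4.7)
The plan is to establish the four direct implications by elementary arguments and then, under the additional hypothesis~\eqref{5r}, to close the loop by proving $(1)\implies(3)$ via a local reverse H\"older inequality.

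For the direct implications: $(3)\implies(2)$ follows from H\"older's inequality applied to the factorisation $w^{1/p}\cdot w^{-1/p}$, which yields $\mi(F)^p\leq\mi_w(F)\bigl(\int_B w^{-p'/p}\,\dd\mi\bigr)^{p/p'}$; rewriting~(3) as $\bigl(\int_B w^{-p'/p}\,\dd\mi\bigr)^{p/p'}\leq C\mi(B)^p/\mi_w(B)$ then gives~(2) with the same $p$. The implication $(2)\implies(1)$ is immediate upon applying~(2) to $B\setminus F$, yielding $\mi_w(F)\leq(1-C(1-\eps)^p)\mi_w(B)$. For $(4)\implies(1)$, consider the set $E=\{x\in B:w(x)\geq\delta\dashint_B w\,\dd\mi\}$ of large values of $w$: if $\mi(F)\leq\eps\mi(B)$ with $\eps<c$, then $\mi_w(B\setminus F)\geq\mi_w(E\setminus F)\geq\delta(c-\eps)\mi_w(B)$. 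Finally, $(3)\implies(4)$ is a Chebyshev-type argument on $w^{-p'/p}$: the sublevel set $\{w<\delta\dashint_B w\,\dd\mi\}$ has $\mi$-measure at most $(C\delta)^{p'/p}\mi(B)$ by~(3), which is strictly less than $\mi(B)$ once $\delta$ is chosen small enough.

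Under hypothesis~\eqref{5r}, the strategy for $(1)\implies(3)$ is first to upgrade~(1) to a reverse H\"older inequality on balls in $\mathcal{B}_R$: there exist $q>1$ and $C'>0$ such that
\[
\biggl(\dashint_B w^q\,\dd\mi\biggr)^{1/q}\leq C'\dashint_B w\,\dd\mi,\qquad B\in\mathcal{B}_R,
\]
from which (3) follows by the classical self-improvement argument applied to the dual weight $w^{-p'/p}$. The reverse H\"older bound itself will be obtained via a local Calder\'on--Zygmund stopping-time decomposition: for each $B_0\in\mathcal{B}_R$ and each $\lambda>\dashint_{B_0}w\,\dd\mi$, select a maximal disjoint family of sub-balls on which the $w$-average first exceeds $\lambda$. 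The local doubling of $\mi$ bounds the averages on these sub-balls above by $a\lambda$ for a fixed $a>1$, while~\eqref{5r} ensures that $\mi_w$ is well-behaved under the passage to fixed dilates. Iterating the decomposition at the levels $a^k\dashint_{B_0}w\,\dd\mi$ and applying~(1) at each step produces the geometric decay
\[
\mi_w\bigl(\bigl\{x\in B_0:w(x)>a^k\dashint_{B_0}w\,\dd\mi\bigr\}\bigr)\leq\delta^k\mi_w(B_0),
\]
from which the reverse H\"older inequality follows by integration.

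The main obstacle is to carry out the stopping-time construction in our \emph{locally} doubling setting: one must ensure that all balls produced by the decomposition, together with the fixed dilates used to transfer estimates between $\mi$ and $\mi_w$, remain within $\mathcal{B}_{R'}$ for some $R'>R$ so that both the local doubling of $\mi$ and~\eqref{5r} continue to apply. This localisation is achievable by covering lemmas of the sort already employed elsewhere in the paper, but requires careful bookkeeping. Once it is performed, the argument is a faithful adaptation of the classical proof in~\cite{StrombergTorchinsky} to the locally doubling setting, with the role of global doubling on a space of homogeneous type played by the combination of the local doubling of $\mi$ on $\mathcal{B}_R$ and hypothesis~\eqref{5r}.
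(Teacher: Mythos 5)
Your proposal is correct and follows essentially the same route as the paper. The implications $(4)\implies(1)$ and $(3)\implies(4)$ are proved in the paper by the same kind of elementary Chebyshev-type arguments you use (the paper parametrizes $(3)\implies(4)$ slightly differently, introducing a threshold $\overline w$ so as to get arbitrary $c\in(0,1)$, but the substance is the same), while the chain $(3)\implies(2)\implies(1)$ and the converse $(1)\implies(3)$ under~\eqref{5r} are delegated by the paper to the arguments of~\cite{StrombergTorchinsky}; you spell out the former and sketch a Calder\'on--Zygmund/reverse-H\"older adaptation of the latter, which is precisely what that reference does.
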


\begin{proof}
	The arguments used to prove~\cite[Corollary 14]{StrombergTorchinsky} show that
	\textnormal{(3)}$\implies$\textnormal{(2)}$\implies$\textnormal{(1)}, and that if	there is a constant $C>0$ such that~\eqref{5r} holds	for all $B\in \mathcal{B}_R$, then (1) implies (3).
	
	We now prove that (4) implies (1). By (4), there are $\delta,c>0$ such that 
\begin{equation}\label{widetildexr}
	\mu(E_\delta(B))\geq c\mi(B)
\end{equation}
	for all $B\in \mathcal{B}_R$, where to simplify the notation we have set
\begin{equation}\label{widetildedef}
 E_\delta(B) = \bigg\{y\in B\colon w(y)\geq \delta \dashint_{B} w\,\dd \mu\bigg\}.
\end{equation}
Notice that $c\leq 1$.
	
	Set $\alpha = c/2$ and $\beta = 1-c\delta/2$. Take $B\in \mathcal{B}_R$, and let $F$ be a Borel subset of $B$ such that $\mu(F)\leq \alpha\mi (B)$. Then
	\begin{align*}
		\mu(E_\delta(B)\setminus F) & = \mu(B \setminus F) + \mu(E_\delta(B)) - \mu((B\setminus F) \cup  E_\delta(B))\\
		& \geq (1-\alpha )\mu(B) + c \mu(B )  - \mu(B) \\
		&= \frac{c}{2}\mu(B ),
	\end{align*}
	so that by~\eqref{widetildexr}
	\[
	\int_{B\setminus F} w\,\dd\mi\geq \int_{ E_\delta(B)\setminus F} w\,\dd \mu\geq \frac c 2 \mu(B) \delta \dashint_{B}w \,\dd \mu= (1-\beta) \int_{B} w\,\dd \mu.
	\]
	Hence,
	\[
	\mi_w(F)\leq \beta \mi_w(B),
	\]
	which proves (1). 
	
To conclude the proof, it will then be enough to prove that (3) implies (4). More precisely, we shall prove that for every $c\in (0,1)$ we may find $\delta\in (0,1)$ so that (4) holds. For $B\in \mathcal{B}_R$, set 
	\[
	\overline w = \inf\{\; t>0\colon \mi(\{ y\in B\colon w(y)>t  \})\leq c\mi(B)\; \},
	\]
	so that clearly
	\[
	\mi(\Set{y\in B \colon w(y)>\overline w})\leq c\mi(B),
	\]
	that is,
	\begin{equation}\label{wbarrato}
		\mi(\Set{y\in B \colon w(y)\leq\overline w})\geq (1-c)\mi(B).
	\end{equation}
	Observe that, in addition,
	\begin{equation}\label{wbarratobis}
		\mi(\Set{y\in B \colon w(y)\geq\overline w})\geq c\mi(B).
	\end{equation}
	By~\eqref{wbarrato}
	\[
	\bigg(\dashint_{B} w^{-p'/p}\,\dd \mi \bigg) ^{p/p'}\geq  (1-c)^{p/p'}\overline w^{-1},
	\]
	and this together with (3) implies
	\[
	\dashint_{B} w\,\dd \mi\leq C \bigg(\dashint_{B} w^{-p'/p}\,\dd \mi \bigg) ^{-p/p'}\leq (1-c)^{-p/p'}C\overline w.
	\]
	Therefore,
	\[
	\Set{y\in B\colon w(y)\geq\overline w }\subseteq E_{(1-c)^{p/p'}C^{-1}}(B),
	\]
	whence, by~\eqref{wbarratobis},
	\[
	\mi(E_{(1-c)^{p/p'}C^{-1}}(B))\geq c\mi(B).
	\]
	The proof is complete.
\end{proof}

The previous result brings us to the following definitions. For notational convenience, we shall maintain the notation~\eqref{widetildedef}.
\begin{definition}
We define $A_{\infty, \loc}$ as the space of non-zero positive functions $w\in L^1_\loc$ for which there are $R,\delta,c>0$ such that
\begin{equation}\label{AinftyR}
 \mu (E_\delta(B)) \geq c \mu(B)
\end{equation}
for all $B\in \Bc_R$. We define $\widetilde A_{\infty, \loc}$ as the space of $w\in A_{\infty,\loc}$ such that there are $R,C>0$ such that
\[
\mi_w(2B)\leq C\mi_w(B)
\]
for every ball $B\in \mathcal{B}_R$. If $p\in [1,\infty)$, we define $A_{p,\loc}$ as the set of non-zero positive functions $w\in L^1_\loc $ for which there are $R,C>0$ such that
	\[
	\bigg(\dashint_{B} w\,\dd \mi \bigg)\bigg( \dashint_{B} w^{-p'/p}  \bigg)^{p/p'}\leq C
	\]
	if $p>1$, and
	\[
	\bigg(\dashint_{B} w\,\dd \mi\bigg)\norm{ w^{-1}}_{L^\infty(B)}\leq C
	\]
	if $p=1$,	for every $B\in \Bc_R$.
\end{definition}

By Jensen's inequality, $A_{p,\loc}\subseteq A_{q,\loc}$ for every $1\leq p\leq q<\infty$, and  by the implication $(3)\implies (4)$ in Proposition~\eqref{prop:ST}, $A_{p,\loc} \subseteq  A_{\infty,\loc}$ for all $p\in (1,\infty)$. We are now going to show that the spaces $A_{p,\loc}$, $p>1$, and $\widetilde A_{\infty,\loc}$ may be equivalently defined by requiring that the stated conditions hold for \emph{every} $R>0$ (instead for \emph{some} $R>0$).

\begin{proposition}\label{prop:5}
If  $w\in \widetilde A_{\infty,\loc}$, then the measure $\mi_w$ is locally doubling and conditions \emph{(1)--(4)} hold for every $R>0$.
\end{proposition}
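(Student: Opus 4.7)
The approach I would take is to first propagate the local doubling of $\mu_w$ from the fixed scale $R$ supplied by the hypothesis to every scale via a chain argument, and then to verify condition (4) at each scale and conclude by Proposition~\ref{prop:ST}.

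First, iterating $\mu_w(2B)\le C\mu_w(B)$ on $\mathcal{B}_R$ gives $\mu_w(5B)\le C^3\mu_w(B)$ on $\mathcal{B}_{R/4}$; combined with the $A_{\infty,\loc}$-condition (4) on $\mathcal{B}_R$, Proposition~\ref{prop:ST} yields conditions (1)--(4) on $\mathcal{B}_{R/4}$. The key new ingredient is a chain comparability: since $d$ is a length distance, for any $R'>0$ and any $x,y\in G$ with $d(x,y)\le R'$ one can find a sequence $x=z_0,\ldots,z_K=y$ with $d(z_k,z_{k+1})<s$ and $K\le R'/s+1$, for any fixed $s\in(0,R/2]$. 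Then $B(z_{k+1},s)\subseteq B(z_k,2s)$ and iterating the initial doubling along the chain gives
\[
\mu_w(B(y,s))\le C^K\mu_w(B(x,s)),
\]
with constant depending only on $R'$ and $s$, not on the centres.

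Using this comparability together with a bounded-overlap cover of $B(x,2r)$ by balls of radius $s:=R/4$ (whose cardinality is bounded by local doubling of $\mu$), one deduces $\mu_w(B(x,2r))\le C(R')\mu_w(B(x,r))$ for every $B(x,r)\in\mathcal{B}_{R'}$, so $\mu_w$ is locally doubling on all scales. In particular $\mu_w(5B)\le C'(R')\mu_w(B)$ on every $\mathcal{B}_{R'}$, and Proposition~\ref{prop:ST} then forces (1)--(4) to be equivalent on $\mathcal{B}_{R'}$. It therefore suffices to verify (4) on $\mathcal{B}_{R'}$. For $B=B(x,r)$ with $r>R/2$ (the complementary case being covered by the hypothesis), I would choose a maximal $s/2$-separated set in $B(x,r-s)$ with $s=R/4$, producing a bounded-overlap cover $\{B_i\}$ of $B(x,r-s)$ by balls $B_i\subseteq B$ of radius $s$, each in $\mathcal{B}_R$. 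The initial (4) gives $\mu(E_{\delta_0}(B_i))\ge c_0\mu(B_i)$. The chain argument together with the local doubling of $\mu_w$ just established ensures $\mu_w(B_i)\gtrsim\mu_w(B)$ uniformly, while $\mu(B_i)\le\mu(B)$ by left invariance; hence $\dashint_{B_i}w\ge\beta\dashint_B w$ for some $\beta=\beta(R')>0$, so $E_{\delta_0}(B_i)\subseteq E_{\delta_0\beta}(B)$. Summing via the bounded overlap and using $\mu(B(x,r-s))\gtrsim\mu(B)$ (valid because $r-s\ge r/2$ and by local doubling of $\mu$) gives $\mu(E_{\delta_0\beta}(B))\ge c\mu(B)$, which is (4) on $\mathcal{B}_{R'}$.

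The hard part will be the chain comparability: because the initial doubling of $\mu_w$ holds only at the fixed scale $R$, comparing $\mu_w$ at two distant balls of radius $s$ is possible only by iterating, at the cost of a constant that grows (exponentially, a priori) with the chain length $R'/s$. The main technical bookkeeping is to confirm that all constants depend only on $R'$ and not on the centre of the ball, so that both the local doubling of $\mu_w$ and the verification of (4) are uniform on $\mathcal{B}_{R'}$.
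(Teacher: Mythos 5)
Your proof is correct, and it takes a genuinely different route from the paper's. The paper works with condition (2), packaged into the classes $\widehat A_{p,R}$; it proves $\widehat A_{p,R}=\widehat A_{p,(5/4)R}$ by decomposing a Borel subset $F$ of a large ball into pieces $F_j$ supported on $\le C_2^3$ balls in $\mathcal{B}_R$ and using the elementary superadditivity bound $\sum_j a_j^p\ge (\# J)^{1-p}\bigl(\sum_j a_j\bigr)^p$, then iterates from scale $R$ to $(5/4)^k R$; the local doubling of $\mu_w$ drops out by applying (2) with $F=\tfrac12 B$, and Proposition~\ref{prop:ST} closes the loop. You instead work directly with condition (4) and exploit the length-space structure of $d$: by chaining balls of fixed radius $s=R/4$ along a geodesic, you get $\mu_w\bigl(B(y,s)\bigr)\le C^{R'/s+1}\mu_w\bigl(B(x,s)\bigr)$ uniformly for $d(x,y)\le R'$, which combined with a bounded-overlap cover gives local doubling of $\mu_w$ at every scale in one step, and then a second bounded-overlap cover together with the estimate $\dashint_{B_i}w\ge\beta\dashint_B w$ transports (4) from small balls $B_i\subseteq B$ up to $B$; Proposition~\ref{prop:ST} again finishes. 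Both arguments ultimately pay an exponential cost in $R'/R$, which is harmless here, so neither is quantitatively sharper. What the paper's route buys is that it works purely measure-theoretically from condition (2) without invoking geodesics (the existence of minimizing geodesics is used elsewhere in the paper, but the claim $\widehat A_{p,R}=\widehat A_{p,R'}$ only needs the connectedness-type covering already used); what your route buys is a more transparent geometric picture in which the jump to all scales is immediate rather than iterative. One small bookkeeping point worth making explicit in a final write-up: the ``local doubling of $\mu_w$ just established'' that you invoke when bounding $\mu_w(B_i)\gtrsim\mu_w(B)$ must be the doubling on $\mathcal{B}_{R'}$ from your previous step (not the initial doubling on $\mathcal{B}_R$), and there is no circularity since that step is already complete by then.
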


\begin{proof}
Define $\widehat A_{p,R}$, for every $R>0$ and $p\in (1,\infty)$,  as the set of positive non-zero  functions $w\in L^1_\loc(\mi)$ for which there is a constant $C>0$ such that
\begin{equation}\label{cond2late}
	\frac{\mi_w(F)}{\mi_w(B)}\geq C \left(\frac{\mi(F)}{\mi(B)}\right)^{p}
\end{equation}
	for every ball $B\in \mathcal{B}_R$, and every Borel subset $F$ of $B$.
	
	We claim that 
\begin{equation}\label{claimwidehatA}
	\widehat A_{p, R}=\widehat A_{p,R'} \qquad \forall \, R,R'>0.
\end{equation}
The statement follows from the claim, as we now show. If $w\in \widetilde A_{\infty,\loc}$,  then there are $p\in (1,\infty)$ and $R>0$ such that $w\in \widehat A_{p,R}$, by Proposition~\ref{prop:ST}. Hence, $w\in \widehat A_{p,R'}$ for every $R'>0$, by the claim. In particular, the measure $ \mi_w$  is locally doubling (apply~\eqref{cond2late} with $F=\frac 1 2 B$, and use the fact that $\mi$ is locally doubling). Therefore, the assertion follows from  Proposition~\ref{prop:ST}.
	
It then remains to prove the claim~\eqref{claimwidehatA}. Pick $R>0$ and $w\in \widehat A_{p, R}$, so that there is a constant $C_1>0$ such that~
\begin{equation}\label{cond2xr}
	\frac{\mi_w(F)}{\mi_w(B(x,r))}\geq C_1 \left(\frac{\mi(F)}{\mi(B(x,r))}\right)^{p}
\end{equation}
	for every $x\in G$ and $r\in (0,R]$, and for every Borel subset $F$ of $B(x,r)$.
	Observe that, since $\mi$ is locally doubling, there is a constant $C_2>0$ such that
\begin{equation}\label{C2}
	\mi(B(e,2r))\leq C_2 \mi(B(e,r))
\end{equation}
	for every $r\in (0, (5/4) R]$.
	Combining the preceding inequalities, we then see that 
	\[
	\mi_w(B(x,2r))\leq C_3 \mi_w(B(x,r))
	\]
	for every $x\in G$ and $r\in (0,R/2]$, where $C_3\coloneqq C_1^{-1} C_2^{p}$.
	
	Then, fix $x\in G$, $r\in (R, (5/4)R]$, and a Borel subset $F$ of $B(x,r)$.	Let $(y_j)_{j\in J}$ be a (finite) family of points in $B(x, (3/4) R)$ which is maximal for the property that $d(y_j, y_{k})\geq R/2$  for every $j,k\in J$ with $j\neq k$.
	Then, the balls $B(y_j, R/4)$ are pairwise disjoint and contained in $B(x,R)$; let us prove that 
\[
\bigcup_{j\in J}B(y_j,R) \supseteq B(x, (5/4)R) \supseteq B(x,r).
\]	
	Indeed, take $y\in B(x, (5/4) R)$, and let $\gamma$ be a minimizing geodesic joining $x$ and $y$, which exists by~\cite[Corollaries 3.49 and 7.51]{AgrachevBoscainBarilari}. Then, there is $z$ in the support of $\gamma$ such that $d(x,z)<(3/4) R$ and $d(z,y)<R/2$. By maximality, there is $j\in J$ such that $d(z,y_j)<R/2$, so that $d(y,y_j)\leq d(y,z)+d(z,y_j)<R$, whence $y\in B(y_j, R)$.
	
	 In addition, since
\[
\mi(B(x,  r)) \leq C_2^3 \mu(B(x,R/4)) = C_2^3 \mu(B(y_j,R/4))
\]	 
for all $j\in J$, one gets
	\[
	\# J C_2^{-3}\mi(B(x,  r)) \leq \sum_{j \in J} \mi(B(y_j,R/4))\leq\mi(B(x, r)),
	\]
	so that
\begin{equation}\label{cardJ}
	\# J\leq C_2^3.
\end{equation}
	Now, observe that
	\[
	\mi_w(B(x,r))\leq \sum_j \mi_w(B(y_j,R))\leq C_3^2 \sum_j \mi_w(B(y_j,R/4))\leq C_3^2 \mi_w(B(x,R)),
	\]
	and that, by~\eqref{cond2xr},
	\[
	\mi_w(B(x,R))\leq C_1^{-1}\mi_w(B(y_j,R/4))\left( \frac{\mi(B(x,R))}{\mi(B(y_j,R/4))}  \right)^p\leq C_1^{-1} C_2^{2p} \mi_w(B(y_j,R/4)),
	\]
	so that
\begin{equation}\label{C4}
	\mi_w(B(x,r))\leq C_4 \mi_w(B(y_j,R))
\end{equation}
with $C_4\coloneqq C_3^2 C_1^{-1} C_2^{2p}$, for every $j\in J$.

Finally, observe that we may find a partition $(F_j)_{j\in J}$ of $F$ into Borel sets such that $F_j\subseteq B(y_j,R)$ for every $j\in J$. Hence, by~\eqref{C4} and~\eqref{cond2xr}
\begin{align*}
		\frac{\mi_w(F)}{\mi_w(B(x,r))} &=\sum_{j} \frac{\mi_w(F_j)}{\mi_w(B(x,r))} \\
		& \geq  C_4^{-1}\sum_{j} \frac{\mi_w(F_j)}{\mi_w(B(y_j,R))} \geq C_1C_4^{-1} \sum_j \left(\frac{\mi(F_j)}{\mi(B(y_j,R))}\right)^{p},
\end{align*}	
while by~\eqref{cardJ} and~\eqref{C2},
			\[
	\begin{split}
\sum_j \left(\frac{\mi(F_j)}{\mi(B(y_j,R))}\right)^{p}
		&\geq  \# J^{1-p} \bigg(\sum_j\frac{\mi(F_j)}{\mi(B(y_j,R))}\bigg)^{p}\\
		&\geq C_2^{3(1-p)} C_2 ^{-2 p} \bigg(\sum_j\frac{\mi(F_j)}{\mi(B(y_j,R/4))}\bigg)^{p}\\
		&\geq  C_2^{3-5p} \bigg(\sum_j\frac{\mi(F_j)}{\mi(B(x,r))}\bigg)^{p}\\
		&\geq C_2 ^{3-5p} \bigg(\frac{\mi(F)}{\mi(B(x,r))}\bigg)^{p}.
	\end{split}
	\]
This shows that $w\in \widehat A_{p, (5/4)R}$. By iteration, one gets $w\in \widehat A_{p, (5/4)^k R}$ for all $k\in \N$. Since $\widehat  A_{p,R} \subseteq \widehat  A_{p,R'} $ if $R>R'$, the claim~\eqref{claimwidehatA} follows.
\end{proof}
As a corollary, we also see that if the condition  defining $A_{p,\loc}$ holds for some $R$, then it holds for all $R$'s.
\begin{corollary}\label{corAp}
Suppose $p\in (1,\infty)$. If $w\in A_{p,\loc}$, then for every $R>0$ there is $C>0$ such that
	\[
	\bigg(\dashint_{B} w\,\dd \mi \bigg)\bigg( \dashint_{B} w^{-p'/p}  \bigg)^{p/p'}\leq C
	\]
	for every $B\in \Bc_R$. Moreover, $A_{p,\loc} \subseteq \widetilde A_{\infty,\loc}$ for every $p\in [1,\infty)$.
\end{corollary}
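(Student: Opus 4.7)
My plan has two logically separated pieces. First, I will establish the inclusion $A_{p,\loc}\subseteq \widetilde A_{\infty,\loc}$ for every $p\in [1,\infty)$, and then use it to upgrade the $A_p$ estimate from the initial radius $R_0$ to every radius.

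For the inclusion, fix $w\in A_{p,\loc}$ with defining constants $R_0,C_0$. If $p>1$, the defining inequality is precisely condition~(3) of Proposition~\ref{prop:ST}. The implication $(3)\Rightarrow(2)$ there (whose standard proof via Hölder's inequality preserves the exponent $p$) yields a constant $c>0$ with
\[
\mi_w(F)/\mi_w(B)\geq c\,(\mi(F)/\mi(B))^p \qquad \text{on } \Bc_{R_0}.
\]
Specialising this to $F=B$ with $2B\in\Bc_{R_0}$ and using the local doubling of $\mi$, I deduce $\mi_w(2B)\leq C\mi_w(B)$ on $\Bc_{R_0/2}$, so $w\in \widetilde A_{\infty,\loc}$. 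If $p=1$, the $A_1$ inequality reads $\dashint_B w\leq C\,\mathrm{ess\,inf}_B w$, which immediately gives condition~(2) with exponent $1$; the $A_\infty$ membership follows from $A_{1,\loc}\subseteq A_{2,\loc}$ (Jensen), and the doubling of $\mi_w$ follows exactly as before.

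For the main assertion, suppose $p>1$. By the previous step and Proposition~\ref{prop:5}, $\mi_w$ is locally doubling on balls of every bounded radius. The crucial algebraic observation is that the weight $u\coloneqq w^{-p'/p}$ belongs to $A_{p',\loc}$ on the same $\Bc_{R_0}$: substituting $w=u^{-p/p'}$ into the $A_p$ inequality and raising both sides to the power $p'/p$ yields precisely the $A_{p'}$ inequality for $u$, with constant $C_0^{p'/p}$. Applying Step~1 to $u$ shows that $\mi_u$ is also locally doubling. Given any $R>0$ and $B=B(x,r)$ with $R_0<r\leq R$, I choose $k\in\N$ with $r\leq 2^k R_0$ and iterate doubling $k$ times to obtain
\[
\dashint_B w\leq D_w^k\dashint_{B(x,R_0)}w,\qquad \dashint_B u\leq D_u^k\dashint_{B(x,R_0)}u,
\]
where I also use the trivial $\mi(B(x,r))\geq\mi(B(x,R_0))$. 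Multiplying these estimates and invoking the $A_p$ condition on $B(x,R_0)\in\Bc_{R_0}$ propagates the $A_p$ inequality to $B$, with a constant depending only on $R/R_0$, $C_0$, and the doubling constants of $\mi_w$ and $\mi_u$.

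The main obstacle is subtle: the implication $(2)\Rightarrow (3)$ in Proposition~\ref{prop:ST} does \emph{not} preserve the exponent $p$, and hence Proposition~\ref{prop:5} on its own yields only $w\in A_{q,\loc}$ for some possibly larger $q$, not for the original $p$. The duality trick $w\leftrightarrow w^{-p'/p}$ is what circumvents this: it provides simultaneous doubling of $\mi_w$ and $\mi_u$, and these two doublings together transport the precise $A_p$ quantity for the original exponent $p$ from small to arbitrarily large balls, with no change of exponent.
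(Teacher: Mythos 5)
Your argument is correct and reaches the same conclusion as the paper's, but by a genuinely different route. The shared key observation is the duality trick: $u\coloneqq w^{-p'/p}\in A_{p',\loc}$ whenever $w\in A_{p,\loc}$, which allows you to treat the two factors of the $A_p$ quantity symmetrically. After that, the two arguments diverge. The paper applies the argument for the implication $(3)\Rightarrow(4)$ of Proposition~\ref{prop:ST} (once Proposition~\ref{prop:5} is in force) to both $w$ and $u$ with the level $c=2/3$, so that for any ball $B\in\Bc_R$ the two sets $\{x\in B\colon w(x)\geq\delta\dashint_B w\}$ and $\{x\in B\colon u(x)\geq\delta\dashint_B u\}$ each occupy at least two thirds of $B$ and hence must intersect; a single point $x_0$ in the intersection then yields both $\dashint_B w\leq\delta^{-1}w(x_0)$ and $\big(\dashint_B u\big)^{p/p'}\leq\delta^{-p/p'}w(x_0)^{-1}$, and the factors of $w(x_0)$ cancel. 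You instead exploit Proposition~\ref{prop:5} only to ensure that both $\mi_w$ and $\mi_u$ are locally doubling on every bounded scale, and then propagate the $A_p$ quantity from small to large balls by iterated doubling, multiplying the two one-sided bounds $\dashint_{B(x,r)}w\leq D_w^k\dashint_{B(x,R_0)}w$ and $\dashint_{B(x,r)}u\leq D_u^k\dashint_{B(x,R_0)}u$. Your version is more elementary, since it never needs the quantitative form of condition~(4), and it makes the dependence of the constant on $R/R_0$ explicit; the paper's intersection argument is shorter but leans more heavily on the equivalence of conditions (1)--(4). Your closing remark about the non-preservation of the exponent under $(2)\Rightarrow(3)$ is exactly the obstruction that makes the duality trick (rather than a direct appeal to Proposition~\ref{prop:5}) necessary, and the paper sidesteps it in the same way, just less explicitly.

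One cosmetic note: your Step~1 establishes the doubling of $\mi_w$ on $\Bc_{R_0/2}$ but does not re-derive $w\in A_{\infty,\loc}$; this is harmless since the inclusion $A_{p,\loc}\subseteq A_{\infty,\loc}$ for $p>1$ is recorded in the paper immediately after the definitions (via $(3)\Rightarrow(4)$), and you do note the analogous point for $p=1$. It would be cleaner to cite it explicitly for $p>1$ as well.
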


\begin{proof}
	Observe first that by the implication (3)$\implies$(2) in Proposition~\ref{prop:ST}, the measures  $\mi$ and $\mi_w$ are equivalent, so that $w(x)>0$ for almost every $x\in G$. 

	Then,  $w^{-p'/p}\in A_{p',\loc}$.  Now,  as the proof of the implication  (3)$\implies$(4) in Proposition~\ref{prop:ST} shows, for every $R>0$ we may find $\delta>0$ such that
	\[
	\mi\bigg(\bigg\{ x\in B\colon w(x)\geq \delta \dashint_{B} w\,\dd \mi   \bigg\}\bigg)\geq \frac 2 3 \mi(B)
	\]
	and
	\[
	\mi\bigg(\bigg\{x\in B\colon w(x)^{-p'/p}\geq \delta \dashint_{B} w^{-p'/p}\,\dd \mi  \bigg\} \bigg)\geq \frac 2 3 \mi(B)
	\]
	for every $B\in \Bc_R$. Hence, 
	\[
	\mu\bigg( \bigg\{x\in B\colon w(x)\geq \delta \dashint_{B} w\,\dd \mi , \:\: w(x)^{-p'/p}\geq \delta \dashint_{B} w^{-p'/p}\,\dd \mi  \bigg\}\bigg) \geq \frac 1 3 \mi(B),
	\]
and in particular there exists $x_0$ belonging to the set in the left hand side. Then	
	\[
	\bigg(\dashint_{B} w\,\dd \mi \bigg)\left( \dashint_{B} w^{-p'/p}\,\dd \mi \right) ^{-p/p'}\leq \frac{w(x_0)}{\delta} \frac{w(x_0)^{-1}}{\delta^{p/p'}}=\frac{1}{\delta^{1+p/p'}}
	\]
for every $B\in \Bc_R$,  whence the first statement. To conclude, notice that by combining the fact that $A_{p,\loc} \subseteq A_{\infty, \loc}$ with the implication (3)$\implies$(2) in Proposition~\ref{prop:ST}, with the first part of the statement and with the local doubling property of $\mu$, see~\eqref{localdoubling}, one gets $A_{p,\loc}\subseteq \widetilde A_{\infty,\loc}$ when $p>1$. The case $p=1$ follows as well since $A_{1,\loc}\subseteq A_{p,\loc}$ for all $p>1$.
\end{proof}

\begin{proposition}\label{prop:6}
If the function $r \mapsto \mi(B(e,r))$ is continuous on $(0,R]$ for some $R>0$, then $A_{\infty,\loc}=\widetilde A_{\infty,\loc}$.
\end{proposition}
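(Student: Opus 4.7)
My plan is to show that any $w\in A_{\infty,\loc}$ satisfies the local doubling condition for $\mu_w$ that defines $\widetilde A_{\infty,\loc}$. Since $A_{p,\loc}\subseteq A_{\infty,\loc}$ trivially, the interesting containment is the other one. The starting point is the implication (4)$\implies$(1) established in Proposition~\ref{prop:ST}: for $w\in A_{\infty,\loc}$ there exist $R_0>0$ and $\eps,\delta\in(0,1)$ such that, for every $B\in\mathcal{B}_{R_0}$ and every Borel $F\subseteq B$,
\[
\mu(F)\leq\eps\,\mu(B)\ \Longrightarrow\ \mu_w(F)\leq\delta\,\mu_w(B).
\]
Equivalently, $\mu_w(F')\geq (1-\delta)\,\mu_w(B)$ whenever $F'\subseteq B$ satisfies $\mu(F')\geq(1-\eps)\,\mu(B)$.

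The idea is then to pass from $B$ to $2B$ not in one step (the annulus $2B\setminus B$ need not be $\mu$-thin) but through a bounded number of intermediate concentric balls chosen so that each consecutive inclusion is $\mu$-thick in the above sense. Here the continuity hypothesis enters: by left invariance, $r\mapsto\mu(B(x,r))$ equals the continuous function $r\mapsto\mu(B(e,r))$ on $(0,R]$, and by local doubling (\ref{localdoubling}) there is $C_0>0$ with $\mu(B(e,2r))\leq C_0\mu(B(e,r))$ for $r$ small. Fix an integer $N\geq\log C_0/\log(1/(1-\eps))$ (independent of $x$ and $r$). Given $r\leq \min(R_0,R)/2$ and $x\in G$, set $r_0=r$ and inductively choose $r_{k+1}\in(r_k,2r]$ by the intermediate value theorem so that
\[
\mu(B(x,r_{k+1}))=\min\!\Big(\tfrac{1}{1-\eps}\mu(B(x,r_k)),\,\mu(B(x,2r))\Big),
\]
stopping once $r_{k+1}=2r$. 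Because each step multiplies the measure by at least $1/(1-\eps)$ until saturation, the procedure terminates within $N$ steps, producing a chain $r=r_0<r_1<\dots<r_M=2r$ with $M\leq N$ and $\mu(B(x,r_k))\geq(1-\eps)\mu(B(x,r_{k+1}))$ for every $k<M$.

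Applying the reformulation of~(1) to each pair with $B=B(x,r_{k+1})$ and $F'=B(x,r_k)$ yields $\mu_w(B(x,r_k))\geq(1-\delta)\mu_w(B(x,r_{k+1}))$. Iterating along the chain gives
\[
\mu_w(B(x,r))\geq(1-\delta)^M\mu_w(B(x,2r))\geq(1-\delta)^N\mu_w(B(x,2r)),
\]
i.e.\ $\mu_w(2B)\leq(1-\delta)^{-N}\mu_w(B)$ for every ball $B$ of radius $\leq\min(R_0,R)/2$, which places $w$ in $\widetilde A_{\infty,\loc}$.

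The only delicate point is making the chain of intermediate radii at once finite, uniform in $x$ and $r$, and compatible with the $A_\infty$ estimate; once the continuity hypothesis is used to realise prescribed intermediate values of $r\mapsto\mu(B(e,r))$, the rest is bookkeeping with the constants $\eps,\delta,C_0$. Note that continuity is used only on $(0,R]$ and only to invoke the intermediate value theorem, so the argument goes through verbatim even if $r\mapsto\mu(B(e,r))$ has jumps at large scales.
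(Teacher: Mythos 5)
Your argument is correct and is essentially the paper's own proof: both reduce via Proposition~\ref{prop:ST} to the implication $\mu(F)\leq\eps\mu(B)\implies\mu_w(F)\leq\delta\mu_w(B)$, use the continuity of $r\mapsto\mu(B(e,r))$ together with the intermediate value theorem to interpolate a chain of boundedly many concentric balls from $B$ to $2B$ with consecutive $\mu$-ratios at least $1-\eps$, and then iterate. The only discrepancy is a slip of the pen in your first sentence (you mean $\widetilde A_{\infty,\loc}\subseteq A_{\infty,\loc}$, not $A_{p,\loc}\subseteq A_{\infty,\loc}$); the remainder matches the paper up to bookkeeping in how the chain is produced (you build it greedily from below, the paper fixes $\ell$ with $(1-\eps)^\ell\leq C^{-1}$ and interpolates).
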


\begin{proof}
	Take $w\in A_{\infty,\loc}$. Then, by Proposition~\ref{prop:ST} there are $R'>0$ and $\eps,\delta\in (0,1)$ such that
\begin{equation}\label{oldclaim}
	\mi(F)\leq \eps \mi(B) \implies \mi_w(F)\leq \delta \mi_w(B) 
\end{equation}
	for every ball $B \in \mathcal{B}_{R'}$, and every Borel subset $F$ of $B$. We may assume that $R'<R/2$.
	
	Since $\mi$ is locally doubling, there is a constant $C>1$ such that
	\[
	\mi(B(e,2r))\leq C \mi(B(e, r))
	\]
	for every $r\in (0,R]$. Take  the smallest $\ell\in \N^*$ such that $(1-\eps)^\ell\leq C^{-1}$,  and observe that, by the continuity of the map $r\mapsto \mi(B(e,r))$ on  $(0,R]$, for every $r\in (0,R/2]$ we may find $r=r_0<\cdots<r_\ell=2 r$ such that
	\[
	\mi(B(x,r_{j-1}))\geq (1-\eps) \mi(B(x,r_{j}))
	\]
	for every $j=1,\dots, \ell$  and $x\in G$. In addition, $\ell\leq 1-\frac{\log C}{ \log(1-\eps)} \eqqcolon \ell^*$.
	 Therefore, by~\eqref{oldclaim},
	\begin{align*}
		\mu_w (B(x,r))=\mu_w (B(x,r_0))	&	\geq (1-\delta) \mu_w (B(x,r_1))\\
		& \geq \cdots \geq (1-\delta)^\ell\mu_w(B(x,r_\ell))\\
		&=(1-\delta)^\ell\mu_w(B(x,2r))\\
		& \geq (1-\delta)^{\ell^*}\mi_w(B(x,2r)),
	\end{align*}
whence $w\in \widetilde A_{\infty,\loc}$. 
\end{proof}

The proof of the following result is inspired by~\cite[Theorem 6]{Dallara}, which deals with cubes instead of balls, and with matrix-valued potential. The insterested reader can actually adapt our proof (and Corollary~\ref{corprop:3}, which is used therein) to the matrix-valued case, but we limit ourselves to the scalar case under consideration. See also~\cite{Auscher-BenAli}.

\begin{theorem}\label{teo:Muck}
Assume that $V\in \widetilde A_{\infty,\loc}$. Then, $\Hs_V$ has purely discrete spectrum if and only if
\begin{equation}\label{eqintinf}
\lim_{x\to \infty}\int_{B(x,R)} V\, \dd \mu =\infty
\end{equation}
for some (equivalently for all) $R>0$.
\end{theorem}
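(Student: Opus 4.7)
The necessity of \eqref{eqintinf} is immediate from Corollary~\ref{corprop:3} applied with any fixed $R>0$. For the sufficiency I will verify the decay condition of Proposition~\ref{prop1} directly, rather than going through Theorem~\ref{teoKS}: a simple Fefferman--Phong argument will bound $\sigma_N(B(x,\epsilon))$ only by a constant of order $\epsilon^{-2}$, which is insufficient for Theorem~\ref{teoKS}, but is exactly what Proposition~\ref{prop1} requires after a covering by small balls. As a preliminary, Proposition~\ref{prop:5} combined with $V\in\widetilde A_{\infty,\loc}$ gives that $\mu_V$ is locally doubling and that property (4) of Proposition~\ref{prop:ST} holds for every $R>0$ with constants $c,\delta\in(0,1)$. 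Iterating the doubling of $\mu_V$, the hypothesis $\int_{B(x,R)}V\,\dd\mu\to\infty$ promotes to $\int_{B(x,\epsilon)}V\,\dd\mu\to\infty$ as $x\to\infty$ for every $\epsilon>0$; this is also the ``some $\Leftrightarrow$ all $R$'' equivalence in the statement.

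The main ingredient is a local Fefferman--Phong estimate: fix $\epsilon_0>0$ so that Poincar\'e~\eqref{Poincare} (with $p=2$, constant $A$) and property (4) (with constants $c,\delta$) both hold on all balls of radius at most $\epsilon_0$. Then for every $\epsilon\in(0,\epsilon_0]$, every ball $B=B(x,\epsilon)$ with $\epsilon^2 \overline V_B\geq 1$ (where $\overline V_B\coloneqq\dashint_B V\,\dd\mu$), and every $f\in C^\infty(G)$, I claim
\[
\int_B \abs{f}^2\,\dd\mu \leq C\epsilon^2\int_B \bigl(\abs{\nabla_\Hc f}^2 + V\abs{f}^2\bigr)\,\dd\mu,
\]
with $C$ depending only on $A,c,\delta$. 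To prove this, split $\int_B\abs{f}^2 = \mu(B)\abs{f_B}^2 + \int_B \abs{f-f_B}^2$ with $f_B\coloneqq\dashint_B f\,\dd\mu$; the second summand is bounded by $A\epsilon^2 \int_B \abs{\nabla_\Hc f}^2\,\dd\mu$ by Poincar\'e. For the first summand, combining $\abs{f}^2\geq \tfrac12\abs{f_B}^2-\abs{f-f_B}^2$ with property (4) and the bound $\int_B V\abs{f}^2\geq \delta\overline V_B\int_{E_\delta(B)} \abs{f}^2\,\dd\mu$ yields
\[
\mu(B)\abs{f_B}^2 \leq \frac{2}{c\delta\overline V_B}\int_B V\abs{f}^2\,\dd\mu + \frac{2A\epsilon^2}{c}\int_B \abs{\nabla_\Hc f}^2\,\dd\mu;
\]
the hypothesis $\epsilon^2 \overline V_B\geq 1$ replaces the coefficient $\tfrac{2}{c\delta \overline V_B}$ by at most $\tfrac{2\epsilon^2}{c\delta}$, proving the claim.

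By the preliminary step, for each $\epsilon\in(0,\epsilon_0]$ there exists $R_\epsilon>0$ such that $\epsilon^2 \overline V_{B(x,\epsilon)}\geq 1$ whenever $d(e,x)>R_\epsilon$. Using the Anker-type covering lemma~\cite[Lemma 1]{Anker} already invoked in the proofs of Propositions~\ref{prop:3} and~\ref{prop:2}, pick a countable $\mathfrak{U}\subset G\setminus B(e,R_\epsilon)$ whose balls $\{B(y,\epsilon)\}_{y\in \mathfrak{U}}$ cover $G\setminus B(e,R_\epsilon+\epsilon)$ with overlap bounded by an integer $N$ that is uniform in $\epsilon\leq\epsilon_0$ (by local doubling of $\mu$). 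Summing the local Fefferman--Phong estimate over this cover yields, for $f\in C^\infty_c$,
\[
\int_{G\setminus B(e,R_\epsilon+\epsilon)} \abs{f}^2\,\dd\mu \leq CN\epsilon^2 Q(f),
\]
and the inequality extends to $f\in\Dom(Q)$ by Lemma~\ref{lem:5}. Since $\epsilon\in(0,\epsilon_0]$ is arbitrary, one obtains $\lim_{r\to\infty}\sup_{Q(f)\leq 1}\int_{G\setminus B(e,r)} \abs{f}^2\,\dd\mu = 0$, so $\Hs_V$ has purely discrete spectrum by Proposition~\ref{prop1}. The main obstacle is the local Fefferman--Phong inequality: one must design the estimate so that, although $\overline V_B$ may be arbitrarily large, the constant in front of $Q_B(f)$ stays uniform. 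The normalization $\epsilon^2 \overline V_B\geq 1$ dictated by the choice of scale is what makes the cancellation possible, and this is exactly why the $A_\infty$ hypothesis in the form of property (4) is the right assumption.
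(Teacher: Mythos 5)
Your proof is correct and follows essentially the same route as the paper: both establish the sufficiency by verifying the decay condition of Proposition~\ref{prop1} directly via a local Fefferman--Phong estimate at scale $\epsilon$ (the paper uses dyadic scales $2^{-k}R$, you use a continuous parameter $\epsilon$, which is only a cosmetic difference), using property~(4) from Proposition~\ref{prop:5}, the Poincar\'e inequality, and a bounded-overlap covering. Your observation that the normalization $\epsilon^{2}\overline V_{B}\geq 1$ is what makes the constants uniform, and that this forces one to shrink the ball radius (so one cannot invoke Theorem~\ref{teoKS} at a fixed scale), is exactly the mechanism implicit in the paper's argument.
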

Observe that $V\in\widetilde A_{\infty,\loc} $ if $V\in A_{p,\loc}$ for some $p\in [1,\infty)$ by Corollary~\ref{corAp}, or if $V\in A_{\infty,\loc}$ and the map $r \mapsto \mi(B(x,r))$ is continuous on $(0,R]$ for some $R>0$ by Proposition~\ref{prop:6}.

\begin{proof}
If $\Hs_V$ has purely discrete spectrum, the statement follows by Corollary~\ref{corprop:3}. It is then enough to show that, if~\eqref{eqintinf} holds and $V\in \widetilde A_{\infty, \loc}$, then $\Hs_V$ has purely discrete spectrum. By Proposition~\ref{prop1}, it is enough to show that 
\begin{equation}\label{suffMuck}
\lim_{r\to \infty} \sup_{\substack{f\in C^\infty_c\\ Q(f)\leq 1}}\int_{G\setminus B(e,r)} |f |^2 =0.
\end{equation}
For a ball $B$ of radius $r>0$, let us define
\[
M(B) = r^2 \dashint_{B} V\, \dd \mu.
\]
Observe that, by Proposition~\ref{prop:5} and~\eqref{pallepiccole}, there is a constant $C>0$ such that
\[
M(2B)\leq C M(B), \qquad \forall B\in \Bc_{R/2}.
\]
Define $\Bc'_{R'}$, for $R'>0$, as the set of balls of radius $R$ which meet $G\setminus B(e,R')$. By~\eqref{eqintinf}, for all $k\in\mathbb{N}^*$ there is $R_k \geq k$ such that $M(B) \geq C^k$ for all $B\in \Bc'_{R_k}$. Therefore,
\[
M(2^{-k}B) \geq C^{-k} M(B) \geq 1
\]
for all $B\in \Bc'_{R_k}$. By Proposition~\ref{prop:5}, there are two constants $c,\delta>0$ such that, for all $B\in \Bc_R$,
\begin{equation}\label{mutildemu}
\mu(E_\delta(B)) \geq c \mu(B).
\end{equation}
Hence, for all $B\in \Bc'_{R_k}$ and $y\in  E_\delta(2^{-k} B)$,
\begin{align}\label{V2kdelta}
V(y) \geq \delta \dashint_{2^{-k} B} V \, \dd \mu = \frac{\delta 4^k}{R^2} M({2^{-k} B})\geq \frac{\delta 4^k}{R^2}.
\end{align}
Let now $f \in C_c^\infty$ be such that $Q(f) \leq 1$, and  $B\in \Bc'_{R_k}$. Then, by~\eqref{mutildemu} and~\eqref{V2kdelta}, 
\begin{align}\label{eq33}
\dashint_{{2^{-k} B}}|f|^2 \, \dd \mu 
& \leq 2 \dashint_{ {E_\delta(2^{-k} B)} \times {(2^{-k} B)}}|f(y) - f(y')|^2 \, \dd \mu(y)\, \dd\mu(y') \nonumber \\
& \qquad \qquad + 2 \dashint_{{E_\delta(2^{-k} B)}}|f(y)|^2 \, \dd \mu(y)\nonumber \\
& \leq \frac{2}{c} \dashint_{{(2^{-k} B)}\times {(2^{-k} B)}}|f(y) - f(y')|^2 \, \dd \mu(y)\, \dd\mu(y')\nonumber \\
& \qquad \qquad + \frac{2 R^2}{c\delta 4^k}\dashint_{ {2^{-k} B}} V |f|^2 \, \dd \mu.
\end{align}
Denote by  $C_{R}$ the constant in the Poincaré inequality~\eqref{Poincare}, and observe that
\begin{equation}\label{prelstep}
\dashint_{{2^{-k} B}}|f|^2 \, \dd \mu  \leq C_R \frac{2 R^2}{c4^k} \dashint_{{2^{-k} B}} |\nabla_\Hc f|^2 \, \dd \mu +  \frac{2 R^2}{c\delta 4^k}\dashint_{{2^{-k} B}} V |f|^2 \, \dd \mu.
\end{equation}
Let now $F_k$ be a  subset of $G\setminus B(e,R_k)$ which is maximal for the property that $d(y,y')\geq 2^{-k}R$ for every $y,y'\in F_k$, $y\neq y'$. Then, by maximality,
\[
G\setminus B(e,R_k)\subseteq \bigcup_{x\in F_k} B(x, 2^{-k}R).
\]
In addition, if $x\in F_k$ and 
\[
F_{k,x}\coloneqq \{y\in F_k\colon  d(x,y)<2^{-k+1}R\},
\]
then the balls $B(y,2^{-k-1}R)$, for $y\in F_{k,x}$, are pairwise disjoint and contained in $B(x, 2^{-k+2}R)$. Therefore, by the left-invariance of $\mi$,
\[
\# F_{k,x}\leq \frac{\mi(B(e,2^{-k+2}R))}{\mi(B(e,2^{-k-1}R))}\leq C'^3,
\]
where $C'$ is the doubling constant of $\mi$ for balls of radii at most $R$. Then, 
\[
\begin{split}
	\int_{G\setminus B(e,R_k)}\abs{f}^2\,\dd \mi&\leq \sum_{x\in F_k} \int_{B(x,2^{-k}R)} \abs{f}^2\,\dd \mi\\
		&\leq \max(C_R,\delta^{-1}) \frac{2R^2}{c 4^k}\sum_{x\in F_k}\int_{B(x,2^{-k}R)} (\abs{\nabla_\Hc f}^2+ V \abs{f}^2)\,\dd \mi\\
		&\leq \max(C_R,\delta^{-1})  C'^3\frac{2R^2}{c 4^k} Q(f) \leq \max(C_R,\delta^{-1}) C'^3\frac{2R^2}{c 4^k}.
\end{split}
\]
 Since the function 
\[
r\mapsto \sup_{\substack{f\in C^\infty_c\\ Q(f)\leq 1}}\int_{G\setminus B(e,r)} |f |^2 
\]
is decreasing, and since $R_k\to +\infty$ for $k\to \infty$,~\eqref{suffMuck} follows.
\end{proof}

\section{Weighted Sub-Laplacians}\label{Sec:WSL}

In this section, we denote by $w$ a positive function in $L^1_\loc$ such that $w^{-1}\in L^1_\loc$. As in the preceding section, we denote by $\mu_w$ the  measure with density $w$ with respect to $\mu$.  We consider the positive Hermitian form
\[
Q_w\colon (f,g)\mapsto \int_G  \nabla_\Hc f \cdot \nabla_\Hc \overline{g} \; \dd \mu_w,
\]
with domain
\[
\Dom(Q_w)= \{f\in L^2(\mu_w)\colon \nabla_\Hc f \in L^2_\Hc(\mu_w)\}.
\]
We emphasize that if $f\in L^2(\mu_w)$, then $ fw^{1/2}\in L^2$, so that $f=(f w^{1/2})w^{-1/2}\in L^1_\loc $. Hence, $\nabla_\Hc f$ is well defined in the distributional sense.

Observe that $Q_w$ is closed, continuous, and positive, and that $C^\infty_c\subseteq \Dom(Q_w)$, so that $Q_w$ is also densely defined. Then, it defines a positive self-adjoint operator $\Ls_w $ on $L^2(\mu_w)$ such that
\[
Q_w(f,g) = \langle \Ls_w f, g \rangle_{L^2(\mu_w)}
\]
for all $f\in \Dom(\Ls_w)$ and $g\in \Dom(Q_w)$.

\begin{proposition}
The space $C^\infty_c$ is dense in $\Dom(Q_w)$ with respect to the graph norm. If in addition $\nabla_\Hc w\in w^{1/2} L^2_{\Hc,\loc}$, then $C^\infty_c\subseteq \Dom(\mathcal{L}_w)$ and for $f\in C^\infty_c$
	\[
	\Ls_w f=\Ls f - \frac{\nabla_\Hc w}{w} \cdot \nabla_\Hc f.
	\]
\end{proposition}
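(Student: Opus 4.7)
My plan is to establish the density first, and then use it together with an integration-by-parts computation to identify $\Ls_w f$ for $f \in C_c^\infty$. For the density of $C_c^\infty$ in $\Dom(Q_w)$, I would closely imitate the three-step approximation of Lemma~\ref{lem:5}. Given $f \in \Dom(Q_w)$, I would first multiply by an approximate unity $(\tau_r)$ to reduce to the case of compactly supported $f$: the identity $\nabla_\Hc(\tau_r f) = \tau_r \nabla_\Hc f + f\,\nabla_\Hc\tau_r$, combined with the uniform boundedness and local uniform decay to zero of $|\nabla_\Hc\tau_r|$ and dominated convergence in $L^2(\mu_w)$, yields convergence in the graph norm of $Q_w$. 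Then I would compose with the smooth bounded Lipschitz truncations $\phi_k$ of Lemma~\ref{lem:5} to reduce further to a bounded, compactly supported $f$. Finally, I would mollify by convolution with an approximate identity $(\eta_j)$: since $\|\eta_j*f\|_\infty \le \|f\|_\infty$ and the support of $\eta_j * f$ stays inside a fixed compact set $K'$ with $\mu_w(K')<\infty$, bounded convergence gives $\eta_j*f\to f$ in $L^2(\mu_w)$, and the identity $X_i(\eta_j*f)=\eta_j*X_if$ (valid distributionally, as $X_if\in L^2(\mu_w)\subseteq L^1_\loc(\mu)$ by Cauchy--Schwarz using $w^{-1}\in L^1_\loc$) together with a standard mollification argument on $K'$ gives the corresponding convergence of the horizontal derivatives.

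Assuming the density, for the formula I would fix $f \in C_c^\infty$ and set
\[
h := \Ls f - \frac{\nabla_\Hc w}{w}\cdot \nabla_\Hc f.
\]
First I would verify $h \in L^2(\mu_w)$: the term $\Ls f$ is smooth and compactly supported, hence in $L^2(\mu_w)$ because $\mu_w$ is finite on compact sets; for the second summand, Cauchy--Schwarz gives
\[
\Bigl\|\tfrac{\nabla_\Hc w}{w}\cdot\nabla_\Hc f\Bigr\|_{L^2(\mu_w)}^2 \le \int \frac{|\nabla_\Hc w|^2}{w}\,|\nabla_\Hc f|^2\,\dd\mu,
\]
which is finite because the hypothesis $\nabla_\Hc w \in w^{1/2}L^2_{\Hc,\loc}$ says exactly that $|\nabla_\Hc w|^2/w \in L^1_\loc$, while $|\nabla_\Hc f|^2 \in C_c$. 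By the density just established, to conclude that $f \in \Dom(\Ls_w)$ and $\Ls_w f = h$ it then suffices to check $Q_w(f,g) = \langle h, g\rangle_{L^2(\mu_w)}$ for every $g \in C_c^\infty$.

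The key identity is an integration by parts. Since $X_if\cdot w \in L^1_\loc$ has distributional horizontal derivative $X_i^2 f\cdot w + X_if\cdot X_iw$, with $X_iw\in L^1_\loc$ by Cauchy--Schwarz and the assumption on $w$, pairing with $\bar g \in C_c^\infty$ and using that the formal $L^2(\mu)$-adjoint of $X_i$ is $-X_i - c_i$ yields
\[
\int (X_if)(X_i\bar g)\,w\,\dd\mu = -\int \bar g\bigl[(X_i^2f+c_iX_if)w+(X_if)(X_iw)\bigr]\,\dd\mu.
\]
Summing over $i$ and recalling $\Ls=-\sum_i(X_i^2+c_iX_i)$, the right-hand side collapses to $\int \bar g\,[\Ls f-\nabla_\Hc w\cdot\nabla_\Hc f/w]\,w\,\dd\mu=\langle h,g\rangle_{L^2(\mu_w)}$, which is the desired identity. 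The main obstacle I anticipate is the density: while the cut-off and truncation steps are routine adaptations of Lemma~\ref{lem:5}, the mollification step is delicate because convolution with $\eta_j$ is not a priori uniformly bounded on $L^2(\mu_w)$ for a general weight with $w, w^{-1}\in L^1_\loc$; the crucial trick is that after the first two reductions everything is confined to a fixed compact of finite $\mu_w$-measure, which permits the use of dominated convergence together with Lebesgue differentiation applied to $X_if\in L^1_\loc(\mu)$.
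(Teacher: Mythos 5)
Your overall route matches the paper's: establish density of $C_c^\infty$ in $\Dom(Q_w)$ by imitating the three-step approximation of Lemma~\ref{lem:5}, then identify $\Ls_w$ on $C_c^\infty$ via an integration by parts against test functions and extend by density. The integration by parts, the verification that $h = \Ls f - \frac{\nabla_\Hc w}{w}\cdot\nabla_\Hc f$ lies in $L^2(\mu_w)$ under $\nabla_\Hc w\in w^{1/2}L^2_{\Hc,\loc}$, and the reduction of the identity $Q_w(f,g)=\langle h,g\rangle_{L^2(\mu_w)}$ to test functions $g\in C_c^\infty$ are all correct and in line with the paper.

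The mollification step of the density argument, however, has a gap that your proposed resolution does not close. After the cut-off and truncation steps $f$ is bounded with compact support, and $\eta_j*f\to f$ in $L^2(\mu_w)$ does follow, as you say, from the uniform bound $\|\eta_j*f\|_\infty\le\|f\|_\infty$ together with dominated convergence on a fixed compact of finite $\mu_w$-measure. But you also need $\nabla_\Hc(\eta_j*f)=\eta_j*\nabla_\Hc f\to\nabla_\Hc f$ in $L^2_\Hc(\mu_w)$, and for this term the same trick fails: $\nabla_\Hc f$ lies in $L^2_\Hc(\mu_w)\cap L^1_{\Hc,\loc}(\mu)$ but need not be bounded, so there is no $L^2(\mu_w)$-dominating function for $\eta_j*\nabla_\Hc f$, and a.e.\ convergence from Lebesgue differentiation does not by itself yield convergence in $L^2(\mu_w)$; Young's inequality only gives convergence in $L^2(\mu)$, not in $L^2(\mu_w)$. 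The paper disposes of this with a bare \emph{mutatis mutandis} and does not spell out how the gradient term converges in $L^2_\Hc(\mu_w)$ either, so you are not deviating from its route; but ``dominated convergence together with Lebesgue differentiation'' is not an argument for the gradient term, and some extra ingredient (for instance a Muckenhoupt-type condition on $w$ making the mollifier uniformly bounded on $L^2(\mu_w)$, or a genuinely different approximation of the gradient) would be needed to make this step rigorous.
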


\begin{proof}
Mutatis mutandis, the density of $C_c^\infty$ can be shown as in Lemma~\ref{lem:5}.

Assume now that $\nabla_\Hc w\in w^{1/2} L^2_{\Hc,\loc} \subseteq L^1_{\Hc,\loc}$, and observe that also
	\[
	\frac{\nabla_\Hc w}{w}\in w^{-1/2} L^2_{\Hc,\loc}\subseteq L^1_{\Hc,\loc}.
	\]
	Then, for every $f,g\in C^\infty_c$,
	\[
	\begin{split}
	Q_w(f,g)&=\int_G  \nabla_\Hc f \cdot \nabla_\Hc \overline{g}\,  w\, \dd \mu\\
		&=\int_G \Big((\Ls f )\overline g w -  (\nabla_\Hc f \cdot \nabla_\Hc w) \, \overline g\Big) \,\dd \mu\\
		&=\left\langle \Ls f- \frac{\nabla_\Hc w}{w}\cdot \nabla_\Hc f \, \bigg\vert\; g \right\rangle_{L^2(\mu_w)}.
	\end{split}
	\]
	Since $\Ls f- \frac{\nabla_\Hc w}{w} \cdot\nabla_\Hc f \in L^2(\mu_w)$, and since $C^\infty_c$ is dense in $\Dom(Q_w)$, this implies that $\Dom(\Ls_w)$ contains $C^\infty_c$ and
	\[
	\Ls_w f=\Ls f - \frac{\nabla_\Hc w}{w} \cdot \nabla_\Hc f
	\]
	for every $f\in C^\infty_c$.
\end{proof}

Assume now that $\nabla_\Hc w\in  L^1_{\Hc,\loc}\cap  (w L^2_{\Hc,\loc})$, $\Ls w\in  L^1_\loc  \cap   (w L^1_{\loc})$, so that the associated potential
\begin{equation}\label{potentialVw}
	V_w = -\frac{\abs{\nabla_\Hc w}^2}{4 w^2}-\frac{\Ls w}{2 w}
\end{equation}
	is in $L^1_\loc $. We also assume that $V_w$ is bounded from below, say $V_w \geq -m+1$. Observe that due to the specific form of such potential, one cannot easily reduce to the case $V\geq 1$ as before. Then, the corresponding Hermitian form
	\[
	Q_{V_w}\colon (f,g)\mapsto \int_G( \nabla_\Hc \cdot  \nabla_\Hc \overline{g} +V_w f \overline g)\,\dd \mu,
	\]
	with domain
	\[
	\Dom(Q_{V_w}) = \{f\in L^2 \colon \nabla_\Hc f\in L^2_{\Hc}, \;  f\sqrt{V_w+m}\in L^2\},
	\]
defines a Schr\"odinger operator $\Hc_{V_w}$ on $L^2$. We endow $\Dom(Q_{V_w})$ with the norm 
\[
f\mapsto \sqrt{m\norm{f}_{L^2}^2+Q_{V_w}(f)}\, .
\]
In the next proposition, we show that under the under slightly stronger, but necessary, assumptions the weighted sub-Laplacian $\Ls_w$ and the Schr\"odinger operator $\Hc_{V_w}$ are unitarily equivalent.

\begin{proposition}
	Assume that 
\begin{equation}\label{assumptionsw}
	\nabla_\Hc w\in (\sqrt{w} L^2_{\Hc,\loc})\cap  (w L^2_{\Hc,\loc}), \qquad \Ls w\in  L^1_\loc  \cap   (w L^1_{\loc}),
\end{equation}
	and that the associated potential satisfies $V_w \geq -m+1$ for some $m> 0$. Then, the map $f\mapsto f w^{1/2}$ induces an isometry of $\Dom(Q_w)$, endowed with the norm 
		\[
		f \mapsto \sqrt{m\norm{f}^2_{L^2(w)}+Q_w(f)}\, ,
		\]
	 onto $\Dom(Q_{V_w})$. In particular, it intertwines $Q_w$ and $Q_{V_w}$.
\end{proposition}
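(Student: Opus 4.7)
The strategy is to verify the isometry identity first on $C_c^\infty$ by a direct pointwise-and-integration-by-parts computation, and then extend it by density from each side to obtain the full bijection.

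For $f\in C_c^\infty$, the Leibniz rule (the relevant chain rule being legitimate because $\nabla_\Hc w\in\sqrt{w}\,L^2_{\Hc,\loc}$ forces $\nabla_\Hc w/(2\sqrt{w})\in L^2_{\Hc,\loc}$) gives
\[
|\nabla_\Hc(fw^{1/2})|^2 = w|\nabla_\Hc f|^2 + \frac{|f|^2}{4w}|\nabla_\Hc w|^2 + \Re\big(\bar f\,\nabla_\Hc f\cdot \nabla_\Hc w\big).
\]
The cross term rewrites as $\tfrac12\nabla_\Hc|f|^2\cdot \nabla_\Hc w$, and the distributional definition of $\Ls w\in L^1_\loc$ applied to the test function $|f|^2\in C_c^\infty$ makes its integral equal to $\tfrac12\int (\Ls w)|f|^2\,\dd\mu$. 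Substituting the formula~\eqref{potentialVw} for $V_w$, the pieces $|f|^2|\nabla_\Hc w|^2/(4w)$ and $\tfrac12(\Ls w)|f|^2$ cancel exactly, leaving
\[
m\|f\|^2_{L^2(\mu_w)} + Q_w(f) = m\|fw^{1/2}\|^2_{L^2(\mu)} + Q_{V_w}(fw^{1/2}), \qquad f\in C_c^\infty.
\]

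The preceding part of the proposition gives that $C_c^\infty$ is dense in $\Dom(Q_w)$ with the graph norm, and $\Dom(Q_{V_w})$ is complete in its own graph norm (closedness of $Q_{V_w}$). Combined with the identity above, $U\colon f\mapsto fw^{1/2}$ therefore extends uniquely from $C_c^\infty$ to a linear isometry $\tilde U\colon \Dom(Q_w)\to \Dom(Q_{V_w})$; continuity of pointwise multiplication by $w^{1/2}$ from $L^2(\mu_w)$ to $L^2(\mu)$ guarantees that $\tilde U$ is still given by the same formula.

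For surjectivity, the image of $\tilde U$ is closed (isometric image of a Hilbert space), so it is enough to show that it is dense. A verbatim variant of Lemma~\ref{lem:5} applied to the shifted potential $V_w+m\geq 1$ shows that $C_c^\infty$ is dense in $\Dom(Q_{V_w})$, so I plan to show that every $g\in C_c^\infty$ lies in the image. The natural candidate for the preimage is $f:=gw^{-1/2}$: this is compactly supported with $\|f\|_{L^2(\mu_w)}=\|g\|_{L^2(\mu)}$, and here the second half of the assumption~\eqref{assumptionsw} is crucial, namely $\nabla_\Hc w\in w\,L^2_{\Hc,\loc}$, which is precisely what makes the formal expression
\[
\nabla_\Hc f = w^{-1/2}\nabla_\Hc g - \frac{g}{2w^{3/2}}\nabla_\Hc w
\]
square-integrable with respect to $\mu_w$, giving $f\in\Dom(Q_w)$; once this is granted, the $L^2(\mu)$-continuity of $\tilde U$ forces $\tilde U f=g$.

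The main technical obstacle is justifying the last distributional chain rule when $w$ is not bounded away from zero. I plan to handle it by truncation: replace $w$ by $w_\epsilon:=w+\epsilon$, for which $w_\epsilon^{-1/2}$ is bounded and the chain rule is classical, verify the formula for $gw_\epsilon^{-1/2}$, and pass to the limit $\epsilon\to 0^+$ via dominated convergence, exploiting $w^{-1}\in L^1_\loc$ together with the compact support and boundedness of $g$ to dominate the limiting integrals.
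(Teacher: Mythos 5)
Your proposal reproduces the paper's argument essentially step by step: verify the isometry identity on $C_c^\infty$ by expanding $|\nabla_\Hc(fw^{1/2})|^2$ and using the distributional definition of $\Ls w$ to make the $|\nabla_\Hc w|^2/(4w)$ and $\Ls w/2$ terms cancel against $wV_w$; extend by density to an isometry $\Dom(Q_w)\to\Dom(Q_{V_w})$; and obtain surjectivity by checking that $gw^{-1/2}\in\Dom(Q_w)$ for $g\in C_c^\infty$, since the closed image is then dense. The $w\mapsto w+\epsilon$ truncation with dominated convergence is precisely how the paper justifies the distributional identities $\nabla_\Hc(w^{\pm 1/2})=\pm\tfrac12 w^{\mp 1/2-1}\nabla_\Hc w$; note only that you should invoke it already for the $w^{1/2}$ chain rule in the first step, not merely at the end for $w^{-1/2}$ -- the hypothesis $\nabla_\Hc w\in\sqrt{w}L^2_{\Hc,\loc}$ gives integrability of the candidate gradient but not the distributional identity itself, and both are settled by the same regularization.
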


\begin{proof}
By the assumptions in~\eqref{assumptionsw}, $\nabla_\Hc w \in L^1_{\Hc,\loc}$ and $V_w \in L^1_\loc$. Let us first notice that
\begin{equation}\label{gradientiw}
\nabla_\Hc (w^{1/2}) = \frac{\nabla_\Hc w}{2w^{1/2}} \qquad \text{and} \qquad \nabla_\Hc (w^{-1/2}) = -\frac{\nabla_\Hc w}{2w^{3/2}}
\end{equation}
in the distributional sense. These formulae hold for $w+\epsilon$ in place of $w$ by the smoothness of the functions $t\to \sqrt{t}$ and $t \to t^{-1/2}$ on $(\epsilon/2, +\infty)$. Then, by our assumptions, the fact that $(w+\eps)^{-1/2}\leq w^{-1/2}$ and by dominated convergence, we see that the right hand sides in~\eqref{gradientiw}, for $w+\epsilon$ in place of $w$, converge in $L^1_\loc$ to the right hand sides in~\eqref{gradientiw}. Analogously, since $(w+\eps)^{\pm1/2}$ converge to $w^{\pm1/2}$ in $L^1_\loc$ by dominated convergence, the left hand sides in~\eqref{gradientiw}, for $w+\epsilon$ in place of $w$, converge in the sense of distributions to the left hand sides in~\eqref{gradientiw}.

We can now show that $fw^{1/2} \in \Dom(Q_{V_w})$ for every $f\in C^\infty_c$. By our assumptions,  $fw^{1/2}\in L^2$ and
\[
\nabla_\Hc(fw^{1/2}) = (\nabla_\Hc f) w^{1/2} + f \frac{\nabla_\Hc w}{ 2 w^{1/2}} \in L^2_\Hc.
\]
Moreover, since 
\[
w V_w = - \frac{|\nabla_\Hc w|^2}{4w} - \frac{1}{2}\Ls w \in L^1_\loc,
\]
one gets $w(V_w+m) \in L^1_\loc$, and hence $f \sqrt{w} \sqrt{V_w+m} \in L^2$, that is, $f \sqrt{V_w+m}\in L^2(\mi_w)$. 

A simple   integration by parts now shows that
	\[
	Q_{V_w}(f w^{1/2}, g w^{1/2})=Q_w(f,g)
	\]
	for all $f,g\in C_c^\infty$.
	
	Since $C_c^\infty$ is dense in $\Dom(Q_w)$, the map $f\mapsto f\sqrt{w}$ extends to an isometry
	\[
	I \colon \Dom(Q_w) \to \Dom(Q_{V_w}).
	\]
Since $\Dom(Q_w)$ and $\Dom(Q_{V_w})$ embed continuously in $L^2(\mi_w)$ and $L^2$ respectively, $I$ is the restriction on $\Dom(Q_w)$ of the surjective isometry  $J\colon L^2(\mi_w) \to L^2$ given by $J(f) = f\sqrt{w}$. To show that $I$ is surjective, it is enough to show that $J^{-1} (C_c^\infty) \subseteq \Dom(Q_w)$, so that $ C_c^\infty \subseteq I(\Dom(Q_w))$. Indeed, this implies that the image of $I$, which is closed since $I$ is an isometry, is dense in $\Dom(Q_{V_w})$; whence $I(\Dom(Q_w)) = \Dom(Q_{V_w})$.

Since $J$ is an isometry, in order to show that $fw^{-1/2} \in \Dom(Q_w)$ when $f\in C_c^\infty$, it will suffice to prove that $\nabla_\Hc(fw^{-1/2}) \in L^2_\Hc(\mi_w)$. This is a consequence of the equality
\[
\nabla_\Hc(f w^{-1/2})  = (\nabla_\Hc f) w^{-1/2} - \frac{1}{2} f (\nabla_\Hc w) w^{-3/2},
\]
and the assumptions~\eqref{assumptionsw} on $w$.
\end{proof}

\end{document}